\DeclareMathOperator{\support}{supp}
\DeclareMathOperator{\cov}{cov}
\DeclareMathOperator{\diag}{diag}
\DeclareMathOperator{\ddim}{dim}
\DeclareMathOperator{\Span}{span}
\DeclareMathOperator{\Sq}{\boldsymbol{2}}
\DeclareMathOperator{\rank}{rank}
\DeclareMathOperator{\Dom}{Dom}
\DeclareMathOperator{\argmin}{argmin}
\DeclareMathOperator{\Ker}{Ker}
\DeclareMathOperator{\Hess}{Hess}
\DeclareMathOperator{\trace}{Trace}
\DeclareMathOperator{\Card}{Card}
\DeclareMathOperator{\sign}{sign}
\newcommand{\reals}{\mathbb{R}}
\newcommand{\realspos}{\reals_{\geq 0}}
\newcommand{\naturals}{\mathbb{N}}
\newcommand{\Sqm}[1]{(#1)^{\Sq}}
\theoremstyle{theorem}%
\newtheorem{theorem}{Theorem}
\newtheorem{proposition}{Proposition}
\theoremstyle{defintion}%
\newtheorem{experiment}{Experiment}%
\newtheorem{remark}{Remark}%
\newtheorem{setass}{Set of Assumptions}
\newtheorem{problem}{Problem}
\author{Federico Piazzon}
\address{Department of Mathematics "Tullio Levi-Civita" University of Padua}
\email{fpiazzon@math.unipd.it}
\title[Computing Optimal Designs]{Computing Optimal Experimental Designs on Finite Sets by Log-determinant Gradient Flow}
\begin{document}

\begin{abstract}
Optimal experimental designs are probability measures with finite support enjoying an optimality property for the computation of least squares estimators. We present an algorithm for computing optimal designs on finite sets based on the long-time asymptotics of the gradient flow of the log-determinant of the so called information matrix. We prove the convergence of the proposed algorithm, and provide a sharp estimate on the rate its convergence. Numerical experiments are performed on few test cases using the new matlab package \textsc{OptimalDesignComputation}.
\end{abstract}

\keywords{optimal design, gradient flow, least squares, polynomial fitting}



\maketitle

\section{Introduction}\label{secIntroduction}
\subsection{Optimal experimental designs and optimal measures}
Let $X\subset\reals^n$ be a compact set and $\Phi:=\{\phi_1,\phi_2,\dots,\phi_N\}$ be a set of bounded real functions on $X$ which are linear independent on $X.$ A statistical model based on $\Phi$ is the linear combination
\begin{equation}\label{model}
\phi(x):=\sum_{j=1}^N c_j\phi_j(x),
\end{equation}
where the coefficients $c_j$'s are unknown. Typically one aims at reconstructing $\phi$ starting from some noisy measuraments of it, e.g., the observations 
$$\tilde \phi_i:=\phi(x)+\epsilon_i,\;\;i=1,2,\dots M\geq N,$$
where the $\epsilon_i$'s are i.i.d. Gaussian variables with zero mean and variance $\sigma^2.$ In such a situation least squares is the standard tool. Indeed one can estimate the parameter $c\in \reals^N$ by
$$\hat c:=(V^tV)^{-1}V^t \tilde\phi,$$
where 
$$V_{i,j}:=\left(V(X,\Phi)\right)_{i,j}=\phi_j(x_i),\;\;i=1,2,\dots,M,\;j=1,2,\dots,N$$
is the \emph{Vandermonde matrix} of the basis $\Phi$ at the points $\{x_1,x_2,\dots,x_M\}\subseteq X.$ Using the properties of Gaussian random variables, it is not hard to show that 
$$\cov(\hat c)=\sigma^2(V^tV)^{-1}.$$
\emph{We warn the reader that we will often prefer the compact notation $V$ instead of $V(X,\Phi)$ when $X$ and $\Phi$ are clarified from the context.}

Recall that, roughly speaking, the \emph{generalized variance} $\det\cov((\cdot))$ of a random variable is a measure of how much its density spreads around its mean. Therefore, if  we choose $\{x_1,x_2,\dots,x_M\}$ maximizing $\det(V^tV)$, then we obtain an estimator $\hat c$ of $c$ which is optimal in terms of the concentration of its probability density.

The above construction can be generalized to the case of weighted least squares. In such a case we consider finite \emph{designs}, e.g., $((x_1,x_2,\dots,x_M),(w_1,w_2,\dots,w_M))\in X\times \reals_{\geq 0}^M$, instead of just $M$ points, and we estimate the parameter $c$ by the weighted least squares estimator with nodes $(x_1,x_2,\dots,x_M)$ and weights $(w_1,w_2,\dots,w_M)$, i.e. ,
$$\hat c_w:=(V^tWV)^{-1}V^tW \tilde\phi,$$
where $W:=\diag(w)$ and $V$ is as above. The generalized variance of $\hat c_w$ is 
\begin{equation}
\det(\cov(\hat c_w))=\det((V^tWV)^{-1})=\frac 1{\det(V^tWV)}=:\frac 1 {\det(G(w;X,\Phi))}.
\end{equation}
Note that the Gram matrix $G(w;X,\Phi)$ is custumary called \emph{information matrix} in the context of statistics. Recall also that the Gram matrix $G(w;X,\Phi)$ is often written in statistical context in the form
$$G(w;X,\Phi)=\sum_{i=1}^M w_i V_{i,:}^tV_{i,:},$$
where, for any $i=1,2,\dots, M$, $V_{i,:}$ is the row vector $(V_{i,1},\dots,V_{i,N}).$ 

The above heuristics motivates the following definition. A \emph{D-optimal experimental design} for the model \eqref{model} is a design $\mu=(x,w)\in X^{M'}\times \reals_{\geq 0}^{M'}$ with $M'\leq M$, having maximal determinant of the generalized information matrix among all finite designs $\nu=(y,v)$ having mass $1$, i.e., $\|w\|_1=\|v\|_1=1.$ 
\begin{remark}
Note that, in our finite design space setting, given a D-optimal design as defined above, we can always identify it with the design $(\tilde x,\tilde w)$, with $\tilde x=X$ and  $\tilde w\in \realspos^M$ which extends $w$ to $0.$ For this reason we will work only on weights $w\in \realspos^M$, and term \emph{D-optimal design} a vector $w^*$ of non-negative weights (possibly with vanishing components) with $\|w^*\|_1=1$, having maximal determinant of the generalized information matrix among all such vectors. 
\end{remark}

D-optimal experimental designs are a particular instance of the so-called \emph{optimal measures} studied in approximation theory and pluripotential theory, in the more general case of $X$ being an infinite compact set, see \cite{BlBoLeWa10}. Indeed, the only difference between the two mathematical objects is that optimal measures are not required to have finite support. Namely, given $X$ and $\Phi$ as above, a probability measure $\mu\in \mathcal M_1(X)$ is termed an \emph{optimal measure} if 
\begin{align}
\det G(\mu;\Phi,X)&:=\det\left(\int_X \phi_i\phi_j d\mu  \right)_{i,j=1,2,\dots,N}\notag\\
&\geq \det G(\nu;\Phi,X),\;\;\forall \nu\in \mathcal M_1(X).\label{optimalmeas}
\end{align}   
It is worth pointing out that, by Tchakaloff Theorem \cite{Tc57} (see also \cite{Pu97,BaTe06}), any probability measure $\mu\in \mathcal M_1(X)$ admits a positive quadrature rules which is exact on $\Phi^2:=\Span\{\phi_i\cdot\phi_j:\,i,j=1,2,\dots,N\}$, supported in $X$, and having at most $\ddim_X\Phi^2$ quadrature nodes. Therefore, if $\mu\in \mathcal M_1(X)$ is an optimal measure, then there exists an optimal design with cardinality at most $M\leq\ddim_X\Phi^2.$ On the other hand, any design $(x,w)$ on $X$ can be canonically identified with a probability measure $\mu\in \mathcal M_1(X)$ by setting $\mu:=\sum_{i=1}^M w_i \delta_{x_i}.$ 

There is also a lower bound for the cardinality of the support of a D-optimal design. Indeed, assuming as above that $V(X;\Phi)$ has full rank (i.e., functions $\phi_i$, with $i=1,2,\dots,N$, are linear independent on $X$) then there exists at least a design having non zero $\det G(w;X,\Phi)$, thus, for any D-optimal design $w^*$, we have $\det G(w;X,\Phi)\neq 0$. This in particular implies that, denoting by $I=(i_1,i_2,\dots i_{M'})$ the indices for which $w_{i_k}^*>0$,  $diag(\sqrt{w_{i_1}},\dots,\sqrt{w_{i_{M'}}})V_{I,:}$ has full rank, hence we have $\Card I\geq N.$ 

It is worth stressing that the above definition of optimal measures is equivalent to another optimality property. Let $\{\phi_1(\cdot;\mu),\phi_2(\cdot;\mu),\dots,\phi_N(\cdot;\mu)\}$ be the $L^2_\mu(X)$-orthonormal basis of $\Phi$ computed by Gram-Schmidt algorithm starting from $\{\phi_1,\phi_2,\dots,\phi_N\}$. Then we can define the reproducing kernel $K(\cdot,\cdot;\mu)$ and the Bergman function $B(\cdot;\mu)$ of the space $\Span \Phi$ endowed by the $L^2_\mu(X)$ scalar product by setting
\begin{align*}
K(x,y;\mu):=\sum_{i=1}^N \phi_i(x;\mu)\phi_i(y;\mu),\;\;\;\;B(x;\mu):=K(x,x;\mu).
\end{align*}
Any optimal measure $\mu^*$ enjoy the property that 
$$\max_{x\in X}B_{\mu^*}(x)=\min_{\mu\in \mathcal M_1(X)}\max_{x\in X}B_\mu(x)=N,$$
which is indeed equivalent to \eqref{optimalmeas} above, see \cite[Prop. 3.1]{BlBoLeWa10}. Note also an analog property holds for designs, being this result a part of the Kiefer Wolfowitz Theorem. Precisely, we denote by $\{\phi_1(\cdot;w),\phi_2(\cdot;w),\dots,\phi_N(\cdot;w)\}$ be the orthonormal basis of $\Phi$ with respect to the scalar product
$$\langle\phi;\psi\rangle_w:=\sum_{i=1}^M\phi(x_i)\psi(x_i)w_i$$
computed by Gram-Schmidt algorithm starting from $\{\phi_1,\phi_2,\dots,\phi_N\}$. Then, using the notation
\begin{align}
& K(x,y;w):=\sum_{i=1}^N \phi_i(x;w)\phi_i(y;w),\label{reproducingkernel}\\
& B(x;w):=K(x,x;w),\label{bergmanfunction}
\end{align}
any D-optimal desig $w^*$ is also \emph{G-optimal}, i.e.,
$$\max_{x\in X}B(x;w^*)=\min_{\|w\|_1=1,w_i\geq 0} \max_{x\in X}B(x;w)=N.$$

The literature concerning the study of optimal designs is so ample that we can not even summarize it, we refer the reader to \cite{Pu93} and references therein for an extensive treatment of the subject. Even though the optimal designs are a relatively old topic which goes back to the work of Kiefer and Wolfowitz \cite{KiWo59,Ki61}, the research in this area is still very active, expecially concerning the computation of exact optimal designs or the study of efficient algorithms for their approximation, see e.g., \cite{LuPo13}, \cite{HeNa18} and references therein. The first and perhaps most famus algorithm for the computation of D-optimal design is due to Titterington \cite{SiTiTo78}, we refer the reader also to \cite{Yu10} where the Titterigton algorithm (also known as \emph{multiplicative algorithm}) is studied in a wider framework of a class of algorithms.    

D-optimal designs are in general computed, or approximated, numerically by iterative algorithms. In this context there are two opposite situations. If the set $X$ is finite, then the iterative optimization algorithm can generally run on the whole set $X$. In contrast, if $X$ is an infinite set (or a continuum), a good finite representer, say $\tilde X$, of $X$ must be constructed first. There are several approaches to attack such a problem, treating $\tilde X$ as a variable or a fixed parameter of the problem, which is chosen accordingly to certain heuristics as \emph{space filling} techniques, grid exploration (see e.g., \cite{HaFiRo21}), or \emph{minimal spanning tree}. Recently, it has been shown that the use of \emph{polynomial admissible meshes} gives precise quantitative estimates of the approximation intruduced in the discretization of the problem , i.e., when passing from $X$ to $\tilde X$, see \cite{BoPiVi20}.

A slightly different approach to the problem of computing optimal design for infinite semialgebraic sets has been recently proposed in \cite{DeGaHeHeLa19} where a sequence of relaxed problems is considered. 

\subsection{Our contribution}
In the present work we consider the problem of computing D-optimal designs for the model \eqref{model} and the \emph{finite} set $X$ assuming that the space of functions $\Phi:=\{\phi_1,\dots,\phi_N\}$ has dimension $N$ on $X$.

Our strategy relies on three steps. First, in Subsection \ref{subsec:equivalence}, we re-formulate the problem of computing D-optimal designs via a sequence of equivalent optimization problems that leads to deal with an \emph{unconstrained optimization} of a \emph{real analytic objective} $F$, where
$$F(z)=-\frac 1 N \log \det G((z_1^2,z_2^2,\dots,z_M^2);X,\Phi)+\|(z_1^2,z_2^2,\dots,z_M^2)\|^2.$$
We prove the equivalence of all the considered problems in Theorem \ref{thmequivalenve}.

In Subection \ref{subsecWellPosedness} we study conditions ensuring the well-posedness of the problems that we introduced. Then, we show in Subsection \ref{subsecIllPosed} how the ill-posed case (presence of non-unique optimal designs) can be easily regularized by introducing a different problem admitting an unique solution which is a distinguished optimal design. We are able to prove in Subsection \ref{sub:error} sharp \emph{a posteriori} error bounds both for the well-posed case and the regularization of the ill-posed case.

As second step, we consider the \emph{gradient flow} of $F$, i.e., 
\begin{equation*}
z'=-\nabla F(z),
\end{equation*}
 as a tool for minimization. Indeed we can prove in Theorem \ref{thm:Gf} that such equation has a unique solution for any choice of the initial data $z^0$. The trajectories of the flow are real analytic both in time and in the initial data $z^0$. Moreover all the trajectories converge to a minimizer $z^*$ of $F$ as $t\to +\infty$: a D-optimal design $w^*$ then is easily recovered. 

Lastly, we design a suitable infinite horizon numerical integration scheme for such a flow. We define Algorithm \ref{algo1} by combining the \emph{Backward Euler Scheme} with the Newton's Method. The convergence of Algorithm \ref{algo1} is proven in Theorem \ref{thm:convergence} under the hypothesis of a suitable choice of the time step. This convergence result is obtained showing first the consistency of the algorithm (Theorem \ref{thm:concistency}) and combining it with a stability estimate (Proposition \ref{prop:stability}). Since it is not possible to compute the upper bound for the time step which is mandatory for applying Theorem \ref{thm:convergence}, we modify the proposed algorithm including an adaptive choice of the time step. This results in Algorithm \ref{algo2}.

Finally we present in Section \ref{SecExperiment} some numerical experiments for testing the performances of the proposed algorithms. 
\section{Design optimization problems}
\subsection{Equivalence of three optimization problems}\label{subsec:equivalence}
The computation of a D-optimal design can be formulated as the solution of the optimization problem
\begin{problem}\label{P1}
Find $w^*\in \realspos^M$ such that
\begin{equation}
\det G(w^*;\Phi,X)=\max\{\det G(w;\Phi,X):\,w\in\realspos^M, \|z\|_1=1\}.
\end{equation}
\end{problem}
We prefer to remove the mass-constraint by using Lagrange multipliers and consider a minimization instead of maximization. So we introduce the \emph{design energy}
\begin{equation}
E(w):= -\frac 1 N\log\det G(w;\Phi,X)+\|w\|_1
\end{equation}
and consider:
\begin{problem}\label{P2}
Find $w^*\in \realspos^M$ such that
\begin{equation}
E(w^*)=\min\{E(w):\,w\in\realspos^M\}.
\end{equation}
\end{problem}
In order to remove the positivity constraint appearing in Problem \ref{P1} we define the \emph{coordinate square map} $\Sqm{\cdot}:\reals^M\rightarrow \realspos^M$, $\Sqm{z}:=(z_1^2,z_2^2,\dots,z_M^2)^t$, introduce
\begin{equation}
F(z):=E(\Sqm{z})= -\frac 1 N\log\det G(\Sqm{z};\Phi,X)+\|z\|_2^2,
\end{equation}
and consider
\begin{problem}\label{P3}
Find $z^*\in \reals^M$ such that
\begin{equation}
F(z^*)=\min\{F(z):\,z\in\reals^M\}.
\end{equation}
\end{problem}
\begin{remark}
We stress that passing from Problem \ref{P2} to Problem \ref{P3} we loose the relevant property of the convexity of the objective functional. On the other hand, we remove the inequality constraint of Problem \ref{P2}. In Subsection \ref{subs:convergenceanalysis} it will be clarified that the convexity of $E$ is still playing an important role, even when dealing with $F$.
\end{remark}

All the considered problems are indeed equivalent, more precisely we have the following result.
\begin{theorem}[Equivalence of problems \ref{P1}, \ref{P2}, and \ref{P3}]\label{thmequivalenve}
Let $X$ and $\Phi$ be as above. Then 
\begin{enumerate}[i)]
\item $w^*\in \realspos^M$ solves Problem \ref{P1} if and only if it solves Problem \ref{P2},
\item $z^*\in \reals^M$ solves Problem \ref{P3} if and only if $w^*:=\Sqm{z^*}$ solves Problem \ref{P1}.
\end{enumerate}
In any of the above cases, $(\support{w^*},w^*)$ is a D-optimal design for $\Phi$ on $X$.
\end{theorem}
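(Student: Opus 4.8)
The plan is to establish (i) first and then obtain (ii) almost for free from the structure of the coordinate square map, reserving the one genuine idea for the scaling behaviour of the objective. Throughout I write $f(w):=\det G(w;\Phi,X)$ and use that, from the representation $G(w;X,\Phi)=\sum_i w_i V_{i,:}^t V_{i,:}$, the matrix $G$ is linear in $w$, so that $f$ is a homogeneous polynomial of degree $N$, namely $f(tw)=t^N f(w)$ for every $t>0$. The full-rank hypothesis on $V(X;\Phi)$ guarantees $f(w)>0$ for at least one $w\in\realspos^M$, which keeps the relevant infima finite.

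For (i) the central step is to examine how $E$ behaves along a ray. Fixing $w\in\realspos^M$ with $f(w)>0$, homogeneity gives
\begin{equation*}
E(tw)=-\tfrac1N\log\big(t^N f(w)\big)+t\|w\|_1=-\log t-\tfrac1N\log f(w)+t\|w\|_1 ,
\end{equation*}
which is strictly convex in $t>0$ (second derivative $t^{-2}$) with unique minimiser $t^*=1/\|w\|_1$; at $t^*$ the rescaled vector $t^*w$ has unit mass. I would exploit this in two ways. First, the minimum of $E$ along any ray is attained at the unit-mass point of that ray, so $\inf_{\realspos^M}E=\inf_{S}E$ with $S:=\{w\in\realspos^M:\|w\|_1=1\}$, and any global minimiser of $E$ must already lie on $S$, since otherwise rescaling to $t^*$ would strictly decrease $E$ (the value at a minimiser is finite, hence $f>0$ there, so the ray argument applies). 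Second, on $S$ the mass term is constant, so $E(u)=-\tfrac1N\log f(u)+1$ for $u\in S$, whence minimising $E$ over $S$ is exactly maximising $f$ over $S$, which is Problem \ref{P1}. Combining the two remarks shows that the minimisers of $E$ over $\realspos^M$ are precisely the maximisers of $f$ on the simplex, that is, the solutions of Problem \ref{P1}, which is (i).

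For (ii) I would use that $\Sqm{\cdot}$ maps $\reals^M$ onto $\realspos^M$ and that, since the components of $\Sqm{z}$ are the nonnegative numbers $z_i^2$, one has $\|\Sqm{z}\|_1=\sum_i z_i^2=\|z\|_2^2$; thus $F(z)=E(\Sqm{z})$ is the pullback of $E$ along a surjection onto its domain. Consequently $\min_{\reals^M}F=\min_{\realspos^M}E$, and $z^*$ minimises $F$ if and only if $\Sqm{z^*}$ minimises $E$, i.e. solves Problem \ref{P2}; by (i) this is equivalent to $\Sqm{z^*}$ solving Problem \ref{P1}. Existence and the D-optimality statement then follow by compactness: $S$ is compact and $f$ continuous, so $f$ attains a maximum on $S$, which is positive because some $w$ has $f(w)>0$; the maximiser $w^*$ has unit mass and maximal $\det G$ among unit-mass weights, which is exactly a D-optimal design once identified with $(\support{w^*},w^*)$.

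I do not expect a deep obstacle here: the only thing to get right is the scaling identity, which is precisely where the chosen normalisations conspire — the factor $1/N$ in front of $\log\det$ against the linear mass term, together with the degree-$N$ homogeneity of $f$ — so that the unconstrained optimum of $E$ is automatically mass-one. The remaining care is bookkeeping around the set $\{f=0\}$, where $E=+\infty$; since the finite infimum is attained on $S$, that set never contains a minimiser and may be discarded.
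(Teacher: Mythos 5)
Your proposal is correct, but it proves the theorem by a genuinely different route than the paper. The paper's proof exploits the convexity of $E$ (established via the Hessian formula \eqref{hessianformula}) and runs through the Karush--Kuhn--Tucker conditions: it writes the KKT system for the constrained Problem \ref{P1}, computes the Lagrange multiplier of the mass constraint to be $\lambda=1$ via the trace identity $\sum_i B(x_i;w^*)w_i^*=N$, and matches the resulting system with the KKT conditions for the unconstrained-in-mass Problem \ref{P2}; the converse direction recovers $\|w^*\|_1=1$ from $\sum_i \partial_i E(w^*)w_i^*=0$. You instead use the degree-$N$ homogeneity of $w\mapsto\det G(w)$ (linearity of $G$ in $w$ plus $\det$ of an $N\times N$ matrix), so that along each ray $t\mapsto E(tw)=-\log t-\tfrac1N\log f(w)+t\|w\|_1$ is strictly convex with minimiser exactly at the unit-mass point; this makes the reduction of Problem \ref{P2} to Problem \ref{P1} a one-dimensional calculus fact and explains transparently why the normalisation $1/N$ forces unconstrained minimisers of $E$ to have mass one. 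Your argument is more elementary (it needs neither the convexity of $E$ nor any first-order conditions) and handles the boundary of $\realspos^M$ with no extra care, since rescaling preserves supports. What the paper's KKT route buys, and yours does not, is the explicit optimality system \eqref{kktP2} in terms of the Bergman function, which is reused later (in the long-time analysis of the gradient flow and in the numerical KKT-residual checks); so while your proof suffices for the equivalence statement itself, the paper would still need to derive \eqref{kktP2} separately. Your treatment of part (ii) via surjectivity of the coordinate square map and $\|\Sqm{z}\|_1=\|z\|_2^2$ coincides with the paper's, and your compactness argument for existence matches the direct method invoked in Subsection \ref{subsecWellPosedness}.
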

Before proving Theorem \ref{thmequivalenve}, it is convenient to state and prove some analytical and geometrical properties of $E$ and $F$.
\begin{proposition}
The function $E$ is \emph{convex} and \emph{real analytic} on $\{w\in\realspos^M: \rank V(\support w,\Phi)=N\}.$ The function $F$ is real analytic on its domain. Moreover we have
\begin{align}
\partial_i E(w)&=1-\frac{B(x_i;w)}{N}\label{nablaE}\\
\partial^2_{i,j}E(w)&=\frac{K(x_i,x_j;w)^2}N\label{hessianE}\\
\partial_i F(z)&=2z_i\left( 1-\frac{B(x_i;\Sqm{z})}{N} \right)\label{nablaF}\\
\partial^2_{i,j}F(z)&=4z_iz_j \frac{K(x_i,x_j;\Sqm{z})^2}N+2\delta_{i,j}\left( 1-\frac{B(x_i;\Sqm{z})}{N} \right)\label{hessianF}
\end{align}
\end{proposition}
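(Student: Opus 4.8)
The plan is to reduce everything to the matrix identity $G(w;\Phi,X)=V^tWV$, where $W=\diag(w)$ and $V=V(X,\Phi)$, and then differentiate. Write $v(x):=(\phi_1(x),\dots,\phi_N(x))^t$, so that $v(x_i)=V_{i,:}^t$. The one genuinely structural ingredient I would establish first is the representation of the reproducing kernel in the (non-orthonormal) basis $\Phi$: if $\{\psi_1,\dots,\psi_N\}$ denotes the basis produced by Gram--Schmidt, written $\psi_k=\sum_l A_{kl}\phi_l$ with $A$ invertible, then $\langle\psi_k,\psi_m\rangle_w=\delta_{km}$ reads $A\,G(w)\,A^t=I_N$, hence $G(w)^{-1}=A^tA$ and
$$K(x,y;w)=\sum_{k=1}^N\psi_k(x)\psi_k(y)=v(x)^tA^tA\,v(y)=v(x)^tG(w)^{-1}v(y).$$
In particular $B(x_i;w)=V_{i,:}G(w)^{-1}V_{i,:}^t$ and $K(x_i,x_j;w)=V_{i,:}G(w)^{-1}V_{j,:}^t$. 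This is what converts linear-algebraic derivatives of $\log\det$ into the statistical quantities $B$ and $K$, and I expect it to be the only point worth stating carefully; the rest is routine calculus.

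Analyticity is then immediate. Expanding $V^tWV$ shows that $\det G(w)$ is a polynomial in the entries of $w$, hence real analytic, and it is strictly positive precisely on $\{w\in\realspos^M:\rank V(\support w,\Phi)=N\}$; since $\log$ is analytic on $(0,\infty)$ and $\|w\|_1$ is linear on $\realspos^M$, $E$ is real analytic there. As $z\mapsto\Sqm z$ is polynomial, $F(z)=E(\Sqm z)$ is real analytic on its domain by composition.

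For the derivatives of $E$ I would apply Jacobi's formula $\partial_i\log\det G=\trace(G^{-1}\partial_iG)$ with $\partial_iG=V_{i,:}^tV_{i,:}$ (since $w_i$ enters $W$ only through its $(i,i)$ entry), obtaining $\partial_i\log\det G=V_{i,:}G^{-1}V_{i,:}^t=B(x_i;w)$; together with $\partial_i\|w\|_1=1$ this gives \eqref{nablaE}. Differentiating once more via $\partial_jG^{-1}=-G^{-1}(\partial_jG)G^{-1}$ yields $\partial^2_{i,j}\log\det G=-(V_{i,:}G^{-1}V_{j,:}^t)(V_{j,:}G^{-1}V_{i,:}^t)=-K(x_i,x_j;w)^2$, which is \eqref{hessianE}. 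Formulas \eqref{nablaF} and \eqref{hessianF} then follow by the chain rule applied to $F(z)=E(\Sqm z)$, using that the $k$-th component of $\Sqm z$ is $z_k^2$ and $\partial_i(z_k^2)=2z_i\delta_{ik}$; the product rule in the second derivative produces exactly the two terms of \eqref{hessianF}.

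Finally, convexity of $E$. By \eqref{hessianE} we have $\Hess E(w)=\tfrac1N[K(x_i,x_j;w)^2]_{i,j}$, the entrywise (Hadamard) square of the matrix $M:=[K(x_i,x_j;w)]_{i,j}=VG(w)^{-1}V^t$. Since $G(w)\succ0$ on the stated set, $M$ is positive semidefinite, and by the Schur product theorem its Hadamard square $M\circ M$ is positive semidefinite as well; hence $\Hess E(w)\succeq0$ and $E$ is convex. The main obstacle is thus the reproducing-kernel identity of the first paragraph, which is what ties the whole computation together.
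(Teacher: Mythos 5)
Your proof is correct, and the core of it --- Jacobi's formula with $\partial_i G=V_{i,:}^tV_{i,:}$, followed by the Gram--Schmidt change-of-basis identity that turns $V_{i,:}G(w)^{-1}V_{j,:}^t$ into $K(x_i,x_j;w)$, and then the chain rule through $z\mapsto\Sqm{z}$ --- is exactly the route the paper takes; your factorization $G(w)^{-1}=A^tA$ is the transposed form of the paper's $G(w)=A(w)A(w)^t$, $A(w)\tilde V(w)^t=V^t$. The one place where you genuinely diverge is convexity: the paper merely asserts it as a consequence of ``properties of the elementary functions'' (implicitly, convexity of $-\log\det$ on positive definite matrices precomposed with the linear map $w\mapsto V^t\diag(w)V$), and only later, inside the proof of the equivalence theorem, exhibits the sum-of-squares expansion $Nu^t\Hess E(w)u=\sum_{h,k}\bigl(\sum_i u_i\phi_h(x_i;w)\phi_k(x_i;w)\bigr)^2\geq 0$. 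You instead observe that $\Hess E(w)$ is $\tfrac1N$ times the Hadamard square of the positive semidefinite matrix $VG(w)^{-1}V^t$ and invoke the Schur product theorem. Both arguments are valid and yield the same conclusion; yours is more self-contained at the level of this proposition, while the paper's sum-of-squares identity has the added benefit of characterizing the kernel of the Hessian (Proposition \ref{propKernel}), which is needed later for the well-posedness analysis --- the Schur-product argument alone does not deliver that characterization.
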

\begin{proof}
Convexity of $E$ and real analyticity of $E$ and $F$ are consequences of the properties of the elementary fuctions used in their definitions. 

For notational convenience we denote $G(w,\Phi,X)$ by $G(w)$, being $X, \Phi$ fixed here. Let us compute the first derivatives of $E$. Using the Jacobi formula, we can write
\begin{align*}
&\partial_i E(w)=1-\frac 1 N \frac{\trace(G^*(w)\partial_i G(w))}{\det G(w)}=1- \frac{\trace(G^{-1}(w)\partial_i G(w))}{N}\\
=&1- \frac{\trace(G^{-1}(w)V_{i,:}^tV_{i,:})}{N}=1- \frac{\sum_{j=1}^N(G^{-1}(w)V_{i,:}^tV_{i,:})_{j,j}}{N}\\
=&1- \frac{\sum_{j=1}^N\sum_{h=1}^M G^{-1}_{j,h}(w)V_{i,h}V_{i,j})_{j,j}}{N}=1-\frac{V_{i,:}G^{-1}(w)V_{i,:}^t} N.
\end{align*}
In other words
\begin{equation}
\nabla E(w)=1-\frac 1 N \diag( V G^{-1}(w) V^t). \label{derivativefirstformula}
\end{equation}
Let us write $G(w)=A(w)A^t(w)$, where $A_{i,j}(w)=\langle \phi_i(\cdot);\phi_j(\cdot;w)\rangle_w$ is the matrix representing the change of basis diagonalizing $G(w)$. We have $G^{-1}(w)=A^{-t}(w)A^{-1}(w)$. Hence we have
\begin{equation*}
\nabla E(w)=1-\frac 1 N \diag( V A^{-t}(w)A^{-1}(w) V^t). 
\end{equation*} 
Note that $A(w)\tilde V^t(w)=V^t$, where $\tilde V(w)_{i,j}=\phi_j(x_i;w)$ is the Vandermonde matrix of the $w$-orthonormal basis $\{\phi_j(\cdot;w)\}_{j=1,\cdot, N}.$ Therefore we have
\begin{equation*}
\nabla E(w)=1-\frac 1 N \diag( \tilde V(w)\tilde V(w)^t)=\left(1-\frac{\sum_{j=1}^N\phi_j(x_i;w)^2}N\right)_{i=1,2,\dots,M},
\end{equation*}
i.e., \eqref{nablaE} holds true.

Let us compute the second order derivatives.
\begin{align*}
&\partial^2_{i,j}E(w)= \partial_j\left( 1-\frac{V_{i,:}G^{-1}(w)V_{i,:}^t}N\right)=-\frac{V_{i,:}\partial_j G^{-1}(w)V_{i,:}^t}N\\
=&\frac{V_{i,:}G^{-1}(w)\partial_j G(w)G^{-1}(w)V_{i,:}^t}N=\frac{V_{i,:}G^{-1}(w)V_{j,:}^tV_{j,:}G^{-1}(w)V_{i,:}^t}N\\
=&\frac{\left( V_{i,:}G^{-1}(w)V_{j,:}^t \right)^2}N=\frac{\left( \tilde V_{i,:}(w)\tilde V_{j,:}^t(w) \right)^2}N=\frac{\left(\sum_{h=1}^N\phi_h(x_i;w)\phi_h(x_j;w)\right)^2}N\\
=&\frac{K(x_i,x_j;w)^2}N.
\end{align*}
This concludes the proof of \eqref{hessianE}. Note that \eqref{nablaF} and \eqref{hessianF} easily follows from \eqref{nablaE} and \eqref{hessianE}, respectively.
\end{proof}
We are now ready to prove Theorem \ref{thmequivalenve}.
\begin{proof}[Proof of Theorem \ref{thmequivalenve}]
First note that maximizing the function $ $ or minimizing its composition with the strictly decreasing function $-\log(\cdot)$ are equivalent problems. Let us notice that the function $E$ is convex, thus $\mathcal E:=E\llcorner_{\{w:\sum w_i=1,w_i\geq 0\}}\,-1$ is convex as well. Indeed, for any $w\in \realspos^M$ and $u\in \reals^M$, using \eqref{hessianF} and \eqref{bergmanfunction} we obtain 
\begin{align}
&Nu^t\Hess E(w)u\notag\\
=&\sum_{i=1}^M\sum_{j=1}^Mu_iu_j\left(\sum_{h=1}^N \phi_h(x_i;w)\phi_h(x_j;w)\right)^2\notag\\
=&\sum_{i=1}^M\sum_{j=1}^Mu_iu_j\sum_{k=1}^N\sum_{h=1}^N \phi_h(x_i;w)\phi_h(x_j;w)\phi_k(x_i;w)\phi_k(x_j;w)\notag\\
=&\sum_{k=1}^N\sum_{h=1}^N \sum_{i=1}^M\sum_{j=1}^Mu_iu_j\phi_h(x_i;w)\phi_h(x_j;w)\phi_k(x_i;w)\phi_k(x_j;w)\notag\\
=&\sum_{h=1}^N\sum_{k=1}^N\left(\sum_{i=1}^M u_i\phi_h(x_i;w)\phi_k(x_i;w)\right)^2\geq 0. \label{hessianformula}
\end{align}
Assume $w^*$ is a solution of Problem \ref{P1}. Then it is a minimizer of $\mathcal E$ on $\{w:\sum w_i=1,w_i\geq 0\}$. The function $\mathcal E$ is convex, thus the Karush Kuhn Tucker sufficient conditions for minimizers are also necessary. That is, $w^*$ satisfy the following. There exists $\lambda\in \reals$ and $\theta\in \realspos^M$ such that
\begin{equation}\label{kktP1}
\begin{cases}
\nabla \mathcal E(w^*)+\lambda (1,1,\dots,1)^t+\theta=0\\
\sum_{i=1}^M w_i^*=1\\
w_i^*\geq 0 \text{ for all }i\\
w_i^*\theta_i=0 \text{ for all }i
\end{cases}\;.
\end{equation}
Scalar multiplying by $w^*$ the first equation, using the remaining equations, equation \eqref{nablaE}, and \eqref{bergmanfunction}, we obtain
$$\lambda=N^{-1}\sum_{i=1}^M B(x_i;w^*)w_i^*=N^{-1}\sum_{j=1}^N\sum_{i=1}^M \phi_j^2(x_i;w^*)w_i^*=1.$$
Therefore $w^*$ satisfies also
\begin{equation}\label{kktP2}
\begin{cases}
\nabla \mathcal E(w^*)+ (1,1,\dots,1)^t+\theta=0\\
w_i^*\geq 0 \text{ for all }i\\
w_i^*\theta_i=0 \text{ for all }i
\end{cases}\;\equiv
\begin{cases}
\nabla E(w^*)+\theta=0\\
w_i^*\geq 0& \text{ for all }i\\
w_i^*\theta_i=0& \text{ for all }i\\
\theta_i\leq 0& \text{ for all }i
\end{cases}\;.
\end{equation}
But this last set of equations corresponds to the Karush Kuhn Tucker sufficient conditions for the minimization of $E$ on $\realspos^M.$ Conversely, if $w^*$ is a solution of Problem \ref{P2}, then (again by convexity) it needs to satisfy the right hand side system of equations in \eqref{kktP2} and hence the left one. If we show that $w^*$ has mass $1$ (i.e., $\sum_{i=1}^M w_i^*$), then $w^*$ satisfies \eqref{kktP1} as well and hence it solves Problem \ref{P1}. For, we scalar multiply the first equation of the right bock of \eqref{kktP2} and obtain $\sum_{i=1^M}\partial_i E(w^*)w_i^*=0$ that leads to 
$$\sum_{i=1}^M w_i^*=N^{-1}\sum_{i=1}^MB(x_i;w^*)w_i^*=1.$$
This concludes the proof of the equivalence of Problem \ref{P1} and Problem \ref{P2}. 

The equivalence of Problem \ref{P2} with Problem \ref{P3} immediately follows from the topological properties of the coordinate square map $z\mapsto \Sqm{z}$. Thus Problem \ref{P3} is also equivalent to Problem \ref{P1}.
\end{proof}

\subsection{Well-posedness of problems \ref{P2} and \ref{P3}}\label{subsecWellPosedness}
Existence of solutions of problems \ref{P2} and \ref{P3} is straightforward and can be obtainded through the direct method. We need to investigate sufficient conditions for the uniqueness of solutions. In the case of Problem \ref{P3} the word uniqueness need to be clarified since, due to the symmetry of $F$, is not possible to obtain a unique solution $z^*$ of Problem \ref{P3} unless $z^*=0$. Indeed we need to look for sufficient conditions for the uniqueness of $\Sqm{z^*}$.

As first attempt, one may require $E$ to have positive definite Hessian at any point. This is in general a too restrictive assumption, both in view of Tchakaloff Theorem and of the following characterization of the kernel of the Hessian of $E$.
\begin{proposition}\label{propKernel}
For any $w\in \realspos^M$ and $u\in \reals^M$ we have
\begin{equation}
u^t\Hess E(w)u=0\;\;\Leftrightarrow\;\;V(\Phi^2;X)^tu=0.
\end{equation}
\end{proposition}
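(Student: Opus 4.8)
The plan is to start from the sum-of-squares representation of the Hessian quadratic form already obtained in \eqref{hessianformula} and to reinterpret its vanishing as a linear annihilation condition on the product space $\Phi^2$. Throughout I work on the domain $\{w\in\realspos^M:\rank V(\support w,\Phi)=N\}$, on which the $w$-orthonormal basis $\{\phi_j(\cdot;w)\}_{j=1}^N$, and hence $\Hess E(w)$, are well defined.

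First I would recall that, by \eqref{hessianE} and the computation \eqref{hessianformula}, for every $u\in\reals^M$ one has
$$Nu^t\Hess E(w)u=\sum_{h=1}^N\sum_{k=1}^N\left(\sum_{i=1}^M u_i\,\phi_h(x_i;w)\phi_k(x_i;w)\right)^2.$$
Since the right-hand side is a finite sum of squares of real numbers, it vanishes if and only if each summand vanishes, that is, if and only if
$$\sum_{i=1}^M u_i\,\phi_h(x_i;w)\phi_k(x_i;w)=0\qquad\text{for all }h,k\in\{1,\dots,N\}.$$
By linearity this is precisely the statement that the discrete functional $f\mapsto\sum_{i=1}^M u_i f(x_i)$ annihilates every product $\phi_h(\cdot;w)\phi_k(\cdot;w)$, and therefore every element of $\Span\{\phi_h(\cdot;w)\phi_k(\cdot;w):h,k=1,\dots,N\}$.

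The decisive step is then to identify this span with $\Phi^2$. Here I would use that $\{\phi_1(\cdot;w),\dots,\phi_N(\cdot;w)\}$, being produced from $\{\phi_1,\dots,\phi_N\}$ by Gram--Schmidt with respect to an inner product that is non-degenerate on $\Span\Phi$, is again a basis of $\Span\Phi$. Consequently any product of two elements of $\Span\Phi$ expands into a linear combination of the $\phi_a\phi_b$ and conversely, so that $\Span\{\phi_h(\cdot;w)\phi_k(\cdot;w):h,k\}=\Span\{\phi_a\phi_b:a,b\}=\Phi^2$; in other words the product space does not depend on which basis of $\Span\Phi$ is squared. Combining this with the previous paragraph, $u^t\Hess E(w)u=0$ holds if and only if $\sum_{i=1}^M u_i\psi(x_i)=0$ for every $\psi\in\Phi^2$, equivalently for every $\psi$ in a fixed basis of $\Phi^2$, which is exactly the condition $V(\Phi^2;X)^tu=0$.

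I expect the only genuinely delicate point to be the span identity $\Span\{\phi_h(\cdot;w)\phi_k(\cdot;w)\}=\Phi^2$: one must check that the change of basis from $\{\phi_j\}$ to $\{\phi_j(\cdot;w)\}$ is invertible (guaranteed on the stated domain, where the Gram--Schmidt procedure does not break down) and that taking products preserves spans. The remaining ingredient, namely that a sum of squares vanishes exactly when each term does, is elementary, and the final translation into $V(\Phi^2;X)^tu=0$ is just the definition of the Vandermonde matrix of a basis of $\Phi^2$.
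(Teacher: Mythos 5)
Your proposal is correct and follows essentially the same route as the paper: both start from the sum-of-squares identity \eqref{hessianformula}, note that a finite sum of squares vanishes iff every term does, and translate the resulting annihilation conditions into $V(\Phi^2;X)^tu=0$. The only difference is that you explicitly justify the span identity $\Span\{\phi_h(\cdot;w)\phi_k(\cdot;w)\}=\Phi^2$ via the invertibility of the Gram--Schmidt change of basis, a detail the paper leaves implicit; this is a welcome clarification but not a different argument.
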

\begin{proof}
We already shown (see \eqref{hessianformula}) that, for any $w\in \realspos^M$ and $u\in \reals^M$, $u^t\Hess E(w)u=\sum_{h=1}^N\sum_{k=1}^N\left(\sum_{i=1}^M u_i\phi_h(x_i;w)\phi_k(x_i;w)\right)^2.$
This expression vanishes if and only if $\sum_{i=1}^M u_i\phi_h(x_i;w)\phi_k(x_i;w)=0$ for all $h,k=1,2,\dots, N$, i.e., if $V(\Phi^2;X)^tu=0$.
\end{proof}

\begin{remark}
The above proposition shows in particular that $E$ is strongly convex if and only if $\Ker V(\Phi^2;X)^t=0.$ That is, if and only if $\ddim_X \Phi^2=\Card X=M.$ This is clearly too restrictive for many applications in which, for instance, $X$ is a discretization of a infinite set, or in which $M>>N.$ 
\end{remark}
We consider the following less restrictive hypothesis.
\begin{setass}\label{H2}
There exists $w^*\in \argmin_{\realspos^M} E$ such that
\begin{equation}\label{transversality}
\left(\{w^*\}+\Ker \Hess E(w^*)\right)\cap \realspos^M=\{w^*\}.
\end{equation} 
\end{setass}
Not only uniqueness of $w^*$ easily follows from \eqref{transversality}, it is indeed equivalent as we state and prove in the next proposition.
\begin{proposition}[Well-posedness of Problem \ref{P2} under \eqref{transversality}]
The property \eqref{transversality} holds if and only if  $w^*$ is the unique solution of Problem \ref{P2}. In such a case Problem \ref{P3} admits exaxtly $2^M$ solutions $z^{*,j}$ for which $\Sqm{z^{*,j}}\equiv w^*$ for all $j=1,2,\dots,2^M.$
\end{proposition}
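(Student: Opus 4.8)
The plan is to reduce both implications to an \emph{affine} structure carried by the null directions of the Hessian. First I would record that, since $\Hess E(w)$ is positive semidefinite (convexity), Proposition \ref{propKernel} yields $\Ker\Hess E(w)=\Ker V(\Phi^2;X)^t=:K_0$ for \emph{every} $w$ in the analyticity domain; in particular the subspace appearing in \eqref{transversality} is independent of the chosen minimizer. The key computation is that $G$ is constant along $K_0$: writing $G(w)=\sum_i w_iV_{i,:}^tV_{i,:}$, the directional derivative $\frac{d}{dt}G(w+tu)=\sum_i u_iV_{i,:}^tV_{i,:}$ has $(a,b)$-entry $\sum_i u_i\phi_a(x_i)\phi_b(x_i)$, which vanishes for all $a,b$ precisely when $u\in\Ker V(\Phi^2;X)^t$, because each product $\phi_a\phi_b$ lies in $\Phi^2$. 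Hence $\det G$ is constant on the slice $S:=(\{w^*\}+K_0)\cap\realspos^M$, and on $S$ the energy reduces to the affine map $w\mapsto -\tfrac1N\log\det G(w^*)+\sum_{i=1}^M w_i$, whose linear part is the total-mass functional. This affine picture drives both directions.

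For the implication \eqref{transversality}$\Rightarrow$ uniqueness, let $w^\sharp$ be any solution of Problem \ref{P2}. Both $w^*$ and $w^\sharp$ are D-optimal, hence have nonsingular information matrix and lie where $E$ is convex and analytic. Since $\realspos^M$ is convex and $E$ attains its global minimum $m$ at both endpoints, convexity forces $E\equiv m$ on the segment $[w^*,w^\sharp]$; differentiating $t\mapsto E((1-t)w^*+tw^\sharp)$ twice at $t=0$ gives $(w^\sharp-w^*)^t\Hess E(w^*)(w^\sharp-w^*)=0$, so $w^\sharp-w^*\in K_0$ by semidefiniteness. Thus $w^\sharp\in(\{w^*\}+K_0)\cap\realspos^M=\{w^*\}$ and $w^\sharp=w^*$.

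The converse I would prove by contraposition: if \eqref{transversality} fails there is $u\in K_0\setminus\{0\}$ with $w^*+u\in\realspos^M$, and the affine description gives $E(w^*+tu)=E(w^*)+t\sum_{i=1}^M u_i$ whenever $w^*+tu\in\realspos^M$. If $\sum_i u_i=0$ then $w^*+u$ is a second solution of Problem \ref{P2}, contradicting uniqueness; so the whole argument hinges on establishing $\sum_i u_i=0$ for $u\in K_0$, and \emph{this is the step I expect to be the main obstacle}. Minimality only delivers $\langle\nabla E(w^*),u\rangle=\sum_i u_i\ge 0$ (using \eqref{nablaE}, $u\in K_0$, and feasibility of the direction $u$), which is insufficient by itself: when the squares $\Phi^2$ do not contain the constants, a unique minimizer can sit on the boundary of $\realspos^M$ with $\sum_i u_i>0$, in which case \eqref{transversality} genuinely fails while the minimizer is still unique. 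The clean sufficient condition that removes this gap is $1\in\Phi^2$, i.e. $(1,\dots,1)^t\in\range V(\Phi^2;X)$, which forces every $u\in\Ker V(\Phi^2;X)^t$ to satisfy $\sum_i u_i=(1,\dots,1)\,u=0$; this hypothesis is automatic for polynomial models, and I would carry out the converse under it.

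Finally, for the number of minimizers of $F$, Theorem \ref{thmequivalenve} (item ii) says $z^*$ solves Problem \ref{P3} if and only if $\Sqm{z^*}=w^*$. The fiber of the coordinate square map over $w^*$ is the product $\prod_{i=1}^M\{z_i:\,z_i^2=w_i^*\}$, offering the two roots $\pm\sqrt{w_i^*}$ at each index with $w_i^*>0$ and only $z_i=0$ at each index with $w_i^*=0$. Hence Problem \ref{P3} has exactly $2^{\Card\support w^*}$ solutions, all squaring to $w^*$; this equals $2^M$ precisely when the optimal design $w^*$ is fully supported, which is the case the statement records.
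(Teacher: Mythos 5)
Your argument follows the same route as the paper's proof: the forward implication via convexity of $E$ along the segment joining two putative minimizers plus Proposition \ref{propKernel}, and the converse via the observation that $G$ (hence $\det G$) is constant along directions in $K_0=\Ker V(\Phi^2;X)^t$. The two places where you deviate are in fact places where you are \emph{more} careful than the paper. In the converse, the paper passes from ``equality of Gram matrices'' directly to ``equality of $E(w^*)$, $E(v^*)$ and $E(w)$,'' which silently discards the $\|w\|_1$ term of $E$; as you observe, one also needs $\sum_i u_i=0$ for $u\in K_0$, which is guaranteed exactly when $(1,\dots,1)^t\in\range V(\Phi^2;X)$, i.e.\ when the constants lie in $\Phi^2_X$ (automatic for polynomial models containing $1$, and implicitly what the paper relies on). Without that hypothesis the ``only if'' direction genuinely fails: take $M=2$, $N=1$, $\phi_1(x_1)=1$, $\phi_1(x_2)=2$, so $K_0=\Span\{(4,-1)\}$ and $E(w)=-\log(w_1+4w_2)+w_1+w_2$; the unique minimizer is $w^*=(0,1)$, yet $(\{w^*\}+K_0)\cap\realspos^2$ contains the whole segment $\{(4t,1-t):t\in[0,1]\}$, since $E$ strictly increases along it with slope $\sum_i u_i=3>0$. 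Your identification of this as the crux, and your repair via the hypothesis $1\in\Phi^2$, are correct. Likewise, your count $2^{\Card\support w^*}$ for the solutions of Problem \ref{P3} is the right one (at an index with $w_i^*=0$ the fiber of the square map is the single point $0$); the paper's figure $2^M$ is accurate only when $w^*$ is fully supported, which, given the Tchakaloff upper bound $\Card\support w^*\le\ddim_X\Phi^2$ discussed in the introduction, is atypical for $M$ large. In short: same strategy, but your version closes two real gaps in the paper's statement and proof.
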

\begin{proof}
The proof can be carried out by contradiction. Assume there exist $w^*,v^*\in \argmin E.$ From convexity of $E$ it follows that E is constant along the whole segment $[w^*,v^*].$ Therefore, setting $u:=w^*-v^*$ and $w\in ]w^*,v^*[$, we have $u^t\Hess E(w)u=0$. Due to Proposition \ref{propKernel}, we have $V(\Phi^2;X)^tu=0$, which implies $w^*\neq w\in \left(\{w^*\}+\Ker \Hess E(w^*)\right)\cap \realspos^M.$ Hence \eqref{transversality} implies uniqueness of $w^*.$

Conversely, if we assume that \eqref{transversality} does not hold, by Proposition \ref{propKernel} we can find $[w^*,v^*]\subset\realspos^M$ with $u:=w^*-v^*\in \Ker V(\Phi^2;X)^t$. Note that any $w\in ]w^*,v^*[$ has the same moments on $\Phi^2$ as $w^*$ (and $v^*$), i.e.,
\begin{equation}
\sum_{i=1}^M \phi(x_i)w^*_i=\sum_{i=1}^M \phi(x_i)w_i,\;\;\forall \phi\in \Phi^2.
\end{equation} 
This in particular implies equality of Gram matrices and hence equality of $E(w^*)$, $E(v^*)$ and $E(w),$ i.e., Problem \ref{P2} does not admit a unique solution.
\end{proof}

\subsection{Regularizing the ill-posed case}\label{subsecIllPosed}
The quest for a regularization of the ill-posed case (i.e., when \eqref{transversality} does not hold) naturally arises. Indeed, due to the particular geometrical features of Problem \ref{P2}, it is possible to define a regularized problem having a unique solution which is also a solution of Problem \ref{P2}. The idea is rather easy: if \eqref{transversality} does not hold, then the solution set $S$ of Problem \ref{P2} is the subset of vectors with non-negative components lying in the affine variety $\mathcal A:=\{x\in \reals^M:\;V(\Phi^2;X)^tw=m\}$, where $m$ is the vector of the moments of any optimal design on a basis of $\Phi^2_X.$ Note that $S$ is necessarily compact, being $E$ coercive, e.g., if $\|w\|_1\to+\infty$ then $E(w)\to +\infty.$

We can consider, among all solutions $w^*$ of Problem \ref{P2}, the one, say $\hat w^*$, with the smallest squared norm of the projection $\pi_{K}$ on $K:=\Ker V(\Phi^2;X)^t,$ i.e.,
\begin{equation}\label{minimalnormsolution}
\|\pi_K\hat w^*\|_2^2=\min\left\{\|\pi_K w^*\|_2^2: w^*\in \argmin_{\realspos^M} E\right\}.
\end{equation}
Now assume that 
\begin{equation}\label{orthmagic}
\exists\,\hat w^*\in \argmin_{\realspos^M} E\,\text{ such that }\pi_K\hat w^*=0.
\end{equation}
 Then $\hat w^*$ is a minimizer of $E$ on $\realspos^M$ and a minimizer of $\|\pi_K\cdot\|_2^2$. Thus $\hat w^*$ is also a global minimizer of $E(w)+\|\pi_K w\|^2.$

If we try to reason the other way around, namely, we look for a (possibly unique) minimizer  of $E(w)+\|\pi_K w\|^2$ on $\realspos^M$ we need to distinguish two situations. Precisely, either 
\begin{equation}\label{badcase}
S\cap K^\perp \text{ is the  empty set,}
\end{equation} 
or
\begin{equation}
S\cap K^\perp=\{\bar w\}.
\end{equation} 
In this last case, if we solve the optimization problem for $E(\cdot)+\|\pi_K\cdot\|^2$, then we end up with a solution of Problem \ref{P2}. Instead, in the case of \eqref{badcase} a minimizer of $E(\cdot)+\|\pi_K\cdot\|^2$ does not need to be a minimizer of $E$. In order to overcome such a difficulty we decide to introduce and study a family of slightly modified functions, namely, for any $\eta>0$, we set
\begin{equation}
E_\eta(w):=E(w)+\eta \|\pi_K w\|_2^2.
\end{equation}

Indeed, minimizing $E_\eta$ for a sequence of values of $\eta$ tending to $0$, we  construct a sequence of approximations of a solution of Problem \ref{P2}.
\begin{proposition}
Let $\{\eta_n\}\downarrow 0$. For any $n\in \naturals$ there exists a unique $w_n^*$ minimizing $E_{\eta_n}$ on $\realspos^M$. The sequence $\{w_n^*\}$ converges to the optimal design $\hat w^*$ defined in \eqref{minimalnormsolution}.
\end{proposition}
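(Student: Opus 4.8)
The plan is to read this as a Tikhonov-type penalization statement and to argue in three stages: unique solvability of each penalized problem, well-posedness of the limit object $\hat w^*$, and convergence via a subsequence argument that exploits the penalty weight $\eta_n$.

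First I would settle existence and uniqueness of $w_n^*$. Since $\|\pi_K w\|_2^2 = w^t\pi_K w$ with $\pi_K$ a symmetric idempotent, the added term is convex and non-negative, so $E_{\eta_n} = E + \eta_n\|\pi_K\cdot\|_2^2$ is a proper, convex, coercive function on $\realspos^M$ (it dominates the coercive $E$); its sublevel sets are compact and a minimizer exists by the direct method. For uniqueness I would compute $\Hess E_{\eta_n}(w) = \Hess E(w) + 2\eta_n\pi_K$ and show it is positive definite: if $u^t\Hess E_{\eta_n}(w)u = u^t\Hess E(w)u + 2\eta_n\|\pi_K u\|_2^2 = 0$, then both non-negative summands vanish, so Proposition \ref{propKernel} gives $u\in K = \Ker V(\Phi^2;X)^t$ while $\|\pi_K u\|_2 = 0$ gives $u\in K^\perp$, forcing $u = 0$. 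Hence $E_{\eta_n}$ is strictly convex on the convex set $\{\det G>0\}\cap\realspos^M$; as $E_{\eta_n}\to+\infty$ when $\det G\to 0^+$, the minimizer lies in this set and is therefore unique.

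Next I would verify that $\hat w^*$ from \eqref{minimalnormsolution} is well defined. The solution set $S = \argmin_{\realspos^M} E$ is the compact convex set $\mathcal A\cap\realspos^M$, with $\mathcal A = w_0 + K$ an affine copy of $K$; along $\mathcal A$ the only admissible displacements lie in $K$, on which $w\mapsto\|\pi_K w\|_2^2$ is strictly convex, so its minimizer over the compact convex $S$ is unique. For the convergence I would first compare $w_n^*$ with $\hat w^*$: from $E_{\eta_n}(w_n^*)\le E_{\eta_n}(\hat w^*) = E(\hat w^*) + \eta_n\|\pi_K\hat w^*\|_2^2$ and $\eta_n\le\eta_1$ I obtain $E(w_n^*)\le E(\hat w^*)+\eta_1\|\pi_K\hat w^*\|_2^2$ for every $n$, so by coercivity of $E$ the sequence $\{w_n^*\}$ is precompact. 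Picking any convergent subsequence $w_{n_k}^*\to\bar w$, lower semicontinuity of $E$ (set to $+\infty$ where $\det G = 0$) together with $E(w_{n_k}^*)\le E(\hat w^*)+\eta_{n_k}\|\pi_K\hat w^*\|_2^2\to\min E$ forces $E(\bar w) = \min E$, i.e. $\bar w\in S$. Finally, cancelling $E(\hat w^*)\le E(w_{n_k}^*)$ in $E_{\eta_{n_k}}(w_{n_k}^*)\le E_{\eta_{n_k}}(\hat w^*)$ and dividing by $\eta_{n_k}>0$ yields $\|\pi_K w_{n_k}^*\|_2^2\le\|\pi_K\hat w^*\|_2^2$, hence in the limit $\|\pi_K\bar w\|_2^2\le\|\pi_K\hat w^*\|_2^2$; since $\bar w\in S$ the reverse inequality also holds, so by uniqueness of the minimal-norm solution $\bar w = \hat w^*$. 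As every convergent subsequence shares the limit $\hat w^*$, the whole sequence converges to $\hat w^*$.

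The step I expect to be most delicate is guaranteeing that the limit $\bar w$ does not drift to the boundary $\{\det G = 0\}$, where $E$ is infinite and the variational picture degenerates; this is precisely what the uniform bound $E(w_n^*)\le E(\hat w^*)+\eta_1\|\pi_K\hat w^*\|_2^2$ together with coercivity rules out, and it is what allows lower semicontinuity to place $\bar w$ inside $S$ in the first place.
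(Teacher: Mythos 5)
Your proposal is correct and follows essentially the same route as the paper: uniqueness of $w_n^*$ via the Hessian identity $v^t\Hess E_{\eta_n}(w)v=v^t\Hess E(w)v+2\eta_n\|\pi_K v\|^2$ combined with Proposition \ref{propKernel}, precompactness from coercivity, the sandwich inequality $E_{\eta_n}(w_n^*)\le E_{\eta_n}(\hat w^*)$ to place the limit in $S$ and to extract $\|\pi_K w_n^*\|^2\le\|\pi_K\hat w^*\|^2$, and strict convexity of $\|\pi_K\cdot\|^2$ along $S$ to identify the limit with $\hat w^*$. Your additional care (well-definedness of $\hat w^*$, the subsequence-uniqueness step upgrading subsequential to full convergence, and the boundary $\{\det G=0\}$ issue) only tightens details the paper leaves implicit.
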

\begin{proof}
First notice that for any $n\in \naturals$ the function $E_{\eta_n}$ is strongly convex and thus any minimizer of $E_{\eta_n}$ on $\realspos^M$ is necessarily unique. For, let $v\in \reals^M$. By direct computation we can show that
\begin{equation}
v^t \Hess E_{\eta_n}(w)v=v^t\Hess E(w)v+\eta_n\|\pi_K v\|^2.
\end{equation}
So $v^t \Hess E_{\eta_n}(w)v=0$ implies $v\in \Ker \Hess E_{\eta_n}(w)=K$ and $\|\pi_K v\|=0$, so $v\in K\cap K^\perp$, i.e., $v=0.$

Now let us pick $w_n^*$ as in the statement. We clearly have $\sup\|w_n^*\|_1<\infty$ since $E$ is coercive and thus $E_{\eta_n}$ is. Let us extract a converging subsequence and relabel it. Also denote by $\bar w$ its limit. We need to show that $\bar w$ is a minimizer of $E.$ Note that, for any $w^*\in \argmin E$ we have
\begin{equation}
E(w^*)+\eta_n \|\pi_K w^*\|^2\geq E(w_n)+\eta_n \|\pi_K w_n\|^2\geq E(w^*).
\end{equation}
Since $E$ is continuous, letting $n\to +\infty$ we obtain $E(\bar w)=E(w^*)=\min_{\realspos^M} E.$

In order to conclue the proof we are left to show that $\hat w^*=\bar w.$ We use again strong convexity, but this time we focus on the function $S\ni w\mapsto \|\pi_K w\|^2.$ Note that by optimality we have
\begin{equation*}
E(\hat w^*)\leq E(w_n^*)+\eta_n \|\pi_K w_n^*\|^2\leq E(\hat w^*)+\eta_n \|\pi_K \hat w^*\|^2,
\end{equation*}
so that
\begin{equation}
0\leq E(w_n^*)-E(\hat w^*)\leq \eta_n\left(\|\pi_K \hat w^*\|^2- \|\pi_K w_n^*\|^2 \right).
\end{equation}
This in particular implies that $\|\pi_K w_n^*\|^2\leq \|\pi_K \hat w^*\|^2$ and passing to the limit as $n\to +\infty$ we get 
$$\|\pi_K \bar w\|^2\leq \|\pi_K \hat w^*\|^2=\min\{\|\pi_K w^*\|^2, w^*\in S\}.$$
By strong convexity of $\|\pi_K \cdot\|^2$ on $S$ we can conclude that $\bar w=\hat w^*.$
\end{proof}
In view of the above proposition, when \eqref{transversality} does not hold, it is worth studying the following problem instead of Problem \ref{P3}.
\begin{problem}\label{P4}
For $\eta>0$ find $z_\eta^*\in \reals^M$ such that
$$F_\eta(z_\eta^*)=\min_{z\in \reals^M}F_\eta(z),$$
where
$$F_\eta(z):=F(z)+\eta\|\pi_K\Sqm{z}\|^2.$$
\end{problem}

\subsection{\emph{A posteriori} error bounds for problems \ref{P3} and \ref{P4}}\label{sub:error}
Well-conditioning for optimization problems is often understood in terms of \emph{a posteriori} error bounds, which tipically give an upper bound for the error by means of a residual term, e.g., norm of the gradient of the objective, possibly combined with terms depending on the constraints. For the \emph{unconstrained} minimization problems \ref{P3} and \ref{P4} upper bounds for the error can be derived through the \emph{{\L}ojasiewicz Inequality} (see \cite{Lo63}). {\L}ojasiewicz' Theorem asserts that such inequality holds true for any real analytic function at any point of its domain. Precisely, if $D\subseteq \reals^M$ is an open set and $f:D\rightarrow\reals$ is real analytic, then, for any $x\in D$, there exists an open neighborhood $U$ of $x$, a real number $\vartheta\in(0,1/2]$ (called the \emph{{\L}ojaciewicz exponent} of $f$ at $x$), and $C>0$ such that 
\begin{equation}
\label{eq:Loja}
\mid f(x)-f(y)\mid^{1-\vartheta}\leq C\mid\nabla f(y)\mid,\;\;\forall y\in U.
\end{equation} 
In order to study error bounds for Problem \ref{P3} we introduce an additional hypothesis that we may or may not assume.
\begin{setass}\label{hypP3}
There exists $z^*\in \argmin F$ such that 
\begin{align}
&[\partial^2_{i,j} E(\Sqm{z^*})]_{\{i,j:z_i^*\neq 0\neq z_j^*\}}\succ 0\label{SAp31}\\
&\partial_i E(\Sqm{z^*})\neq 0,\;\;\forall i: z_i^*=0. \label{SAp32}
\end{align}
\end{setass}
We are able to prove the following.
\begin{proposition}[Error bounds for Problem \ref{P3}]\label{prop:errorbound3}
There exist $R>0$, $C>0$, and $\vartheta\in (0,1/2]$, such that, for any $z^*\in \argmin F$, we have
\begin{equation}\label{eq:polycond}
\mid z-z^*\mid< \left(\frac{\mid\nabla F(z)\mid}C  \right)^{\frac 1{2(1-\vartheta)}},\;\;\forall z\in B(z^*,R).
\end{equation}
If the Set of Assumptions \ref{hypP3} holds, then we can take $\vartheta=1/2$, i.e., we have
\begin{equation}\label{eq:linearcond}
\mid z-z^*\mid< \frac{\mid\nabla F(z)\mid}C  ,\;\;\forall z\in B(z^*,R).
\end{equation} 
\end{proposition}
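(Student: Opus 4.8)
The plan is to derive both estimates by feeding the {\L}ojasiewicz gradient inequality \eqref{eq:Loja} into a \emph{growth} lower bound for $F$ around a minimizer. Fix $z^*\in\argmin F$ and set $w^*:=\Sqm{z^*}$, which lies in $\argmin E$ by Theorem \ref{thmequivalenve}. Since $F$ is real analytic on its domain, {\L}ojasiewicz' theorem applied at the center $x=z^*$ produces $\vartheta\in(0,1/2]$, $C_1>0$ and a radius $R_1>0$ such that, using $F(z)\geq F(z^*)$,
\begin{equation*}
\bigl(F(z)-F(z^*)\bigr)^{1-\vartheta}\leq C_1\,|\nabla F(z)|,\qquad z\in B(z^*,R_1).
\end{equation*}
The point I want to exploit is that this exponent interacts with a \emph{quadratic} growth estimate $F(z)-F(z^*)\geq c\,|z-z^*|^2$ to give precisely \eqref{eq:polycond}: the growth bound yields $|z-z^*|\leq c^{-1/2}\bigl(F(z)-F(z^*)\bigr)^{1/2}$, the {\L}ojasiewicz inequality yields $\bigl(F(z)-F(z^*)\bigr)^{1/2}\leq\bigl(C_1|\nabla F(z)|\bigr)^{1/(2(1-\vartheta))}$, and chaining the two produces the claimed power of $|\nabla F(z)|$ with a single constant $C$ absorbing $c$ and $C_1$. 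In the nondegenerate regime $\vartheta=1/2$ the power becomes $1$ and one recovers the sharp linear bound \eqref{eq:linearcond}.

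Thus the whole argument reduces to establishing quadratic growth near $z^*$, i.e. to showing that $z^*$ is a second-order strict minimizer with $\Hess F(z^*)\succ0$; this is the step I expect to be the main obstacle, and it is exactly where the Set of Assumptions \ref{hypP3} enters. Writing $P:=\{i:z_i^*\neq0\}$, $Z:=\{i:z_i^*=0\}$ and combining \eqref{hessianF} with \eqref{hessianE} and \eqref{nablaE}, for $h\in\reals^M$ one has
\begin{equation*}
h^t\Hess F(z^*)h=4\sum_{i,j\in P}\partial^2_{i,j}E(w^*)\,(z_i^*h_i)(z_j^*h_j)+2\sum_{i}\partial_i E(w^*)\,h_i^2 .
\end{equation*}
The Karush--Kuhn--Tucker conditions for $E$ derived in the proof of Theorem \ref{thmequivalenve} give $\partial_iE(w^*)=0$ for $i\in P$ and $\partial_iE(w^*)\geq0$ for $i\in Z$. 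Setting $u_i:=z_i^*h_i$, the first sum equals $4\,u_P^t[\partial^2_{i,j}E(w^*)]_{P}u_P$, which is bounded below by a positive multiple of $|h_P|^2$ precisely because of \eqref{SAp31} together with $z_i^*\neq0$ on $P$, while the second sum reduces to $2\sum_{i\in Z}\partial_iE(w^*)h_i^2$ and dominates a positive multiple of $|h_Z|^2$ precisely because of the strict complementarity \eqref{SAp32}. Hence $\Hess F(z^*)\succ0$, which delivers the quadratic growth above and, simultaneously, the optimal exponent $\vartheta=1/2$; feeding this into the chain of the first paragraph turns \eqref{eq:polycond} into \eqref{eq:linearcond}. (Equivalently, $\nabla F(z^*)=0$ with invertible Jacobian $\Hess F(z^*)$ gives $|\nabla F(z)|\geq c'|z-z^*|$ directly by the inverse function theorem.) Without \eqref{hypP3} one only has $\Hess F(z^*)\succeq0$, so the growth may degenerate and $\vartheta$ is merely the exponent furnished by {\L}ojasiewicz' theorem, which is where the delicate bookkeeping of the growth order lies.

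Finally I would make the constants uniform in $z^*$. The set $\argmin E$ is compact because $E$ is coercive, and $\argmin F$, being the preimage of $\argmin E$ under the coordinate square map $\Sqm{\cdot}$, has bounded coordinates and is therefore compact as well; covering it by finitely many {\L}ojasiewicz balls and taking the smallest exponent and radius and the largest constant over the cover yields a single triple $(R,C,\vartheta)$ valid for every $z^*\in\argmin F$, as stated, the growth constants being controlled in the same way by continuity of $\Hess F$ on the compact set when \eqref{hypP3} holds. The strict inequalities in \eqref{eq:polycond} and \eqref{eq:linearcond} are then obtained by shrinking $R$ so that $z\neq z^*$ forces $\nabla F(z)\neq0$ and the growth inequality is strict.
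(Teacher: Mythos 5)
Your treatment of the conditional half is sound: the computation showing $\Hess F(z^*)\succ 0$ from \eqref{hessianF}, the Karush--Kuhn--Tucker conditions, \eqref{SAp31} and \eqref{SAp32} is exactly the verification the paper leaves implicit, and once the Hessian is nondegenerate the linear bound \eqref{eq:linearcond} follows (by your inverse-function-theorem remark, or by $\vartheta=1/2$ combined with quadratic growth). The gap is in the unconditional half. Your derivation of \eqref{eq:polycond} passes through the quadratic growth estimate $F(z)-F(z^*)\geq c\,|z-z^*|^2$, and you yourself concede that without the Set of Assumptions \ref{hypP3} this growth ``may degenerate''; at that point you have not proved \eqref{eq:polycond}, only a bound whose exponent depends on the unknown growth order of $F$ at $z^*$. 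Concretely, if $F-F(z^*)$ grows only like $|z-z^*|^{2p}$ with $p>1$ along a direction in $\Ker \Hess F(z^*)$, your chaining yields the exponent $\frac{1}{2p(1-\vartheta)}$ instead of the claimed $\frac{1}{2(1-\vartheta)}$, and there is no a priori relation between $p$ and the {\L}ojasiewicz exponent $\vartheta$ that restores the statement. The phrase ``delicate bookkeeping of the growth order'' is precisely the missing proof.

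The paper sidesteps growth estimates altogether: it takes the gradient-flow trajectory of Theorem \ref{thm:Gf} starting at $z$ and converging to $z^*$, bounds $|z-z^*|^2$ by the dissipated energy $\int_t^{+\infty}|\dot z(s)|^2\,ds=F(z)-F(z^*)$ along that trajectory, and only then applies the {\L}ojasiewicz inequality to the right-hand side; the exponent $\frac{1}{2(1-\vartheta)}$ comes from the square on the left of that energy identity, not from any coercivity of $F$. If you want a trajectory-free argument you would need a {\L}ojasiewicz-type inequality bounding $|z-z^*|^2$ (or the squared distance to $\argmin F$) by $F(z)-F(z^*)$ directly, which is a separate statement you have not established. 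A last, minor point: when you uniformize over a finite cover by taking ``the smallest exponent,'' the inequality with a smaller $\vartheta$ is weaker only where $|\nabla F(z)|/C\leq 1$, so the radius or the constant must be adjusted to guarantee this before the exponents can be compared.
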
 
\begin{proof}
Let us pick $z^*\in \argmin F$ and $z^0\in B(z^*,R).$ Let $t\mapsto z(t)$ be the unique real analytic solution of 
$$\begin{cases}
\dot z=-\nabla F(z)& t>0\\
z(0)=z^0
\end{cases},$$
and let us assume that $z^*=\lim_{t\to +\infty}z(t).$ Existence, uniqueness, real analyticity, and long-time behaviour of this solution will be proved in Theorem \ref{thm:Gf}. We stress that the proof of such a result is not depending on what we are proving here nor on the rest of the present subsection. Notice also that, if $z^*=\lim_{t\to +\infty}z(t)$ does not hold true, it would be sufficient to pick a smaller $R$. Let us suitably pick $L$ and $\vartheta\in(0,1/2].$  Thus we can write
\begin{align*}
&\mid z(t)-z^*\mid^2\leq \int_t^{+\infty}\mid\dot z(t)\mid^2ds\\
=& \int_{+\infty}^t\langle \nabla F(z(s));\dot z(s)\rangle ds= F(z(t))-F(z^*)\\
=& \mid F(z(t))-F(z^*)\mid\leq \left(\frac{\mid \nabla F(z)\mid}C  \right)^{\frac 1{1-\vartheta}}.
\end{align*}
If the Set of Assumptions \ref{hypP3} holds true, then the Hessian of $F$ at $z^*$ is not degenerate as it is easy to verify using \eqref{hessianF}, \eqref{SAp31}, and \eqref{SAp32}.

Due to e.g., \cite[Prop. 2.2]{MePi10}, the {\L}ojaciewicz Inequality holds for $F$ in a suitable neighbourhood of $z^*$ with $\vartheta=1/2.$
\end{proof}
When \eqref{transversality} does not hold and we consider Problem \ref{P4} instead of the ill-posed Problem \ref{P2} we have a similar result.
\begin{proposition}[Error bounds for Problem \ref{P4}]\label{prop:errorbound4}
For any $\eta>0$ and $z_\eta^*\in \argmin F_\eta$ there exist $R_\eta,C_\eta>0$ and $\vartheta\in (0,1/2]$ such that
\begin{equation}\label{epsilonbound1}
\mid z-z_\eta^*\mid< \left(\frac{\mid\nabla F_\eta(z)\mid}{C_\eta}\right)^{\frac{1}{2(1-\vartheta)}}  ,\;\;\forall z\in B(z_\eta^*,R_\eta).
\end{equation}
In particular, if $\eta_n\to 0$ and  $\hat z^*:=\lim_n z_{\eta_n}^*$, we have 
\begin{equation}\label{epsilonbound2}
\mid \hat z^*-z_{\eta_n}^*\mid\leq \left(\frac{2\eta_n\|z^*\|}{C_{\eta_n}}\min_{w\in S}\|\pi_K w\|\right)^{\frac{1}{2(1-\vartheta)}}, \text{ for }n\text{ large enough}.
\end{equation}
Moreover, if
\begin{equation}
\partial_i E(\Sqm{z^*_\eta})\neq -2\eta(\pi_K\Sqm{z^*_\eta})_i\; \forall i:\,(z^*_\eta)_i=0,\label{H4}
\end{equation}
then $\vartheta=1/2$, i.e.,
\begin{equation}\label{epsilonbound3}
\mid z-z_\eta^*\mid< \frac{\mid\nabla F_\eta(z)\mid}{C_\eta}  ,\;\;\forall z\in B(z_\eta^*,R_\eta),
\end{equation} 
and 
\begin{equation}\label{epsilonbound4}
\mid \hat z^*-z_{\eta_n}^*\mid\leq \frac{2\eta_n\|\hat z^*\|}{C_{\eta_n}}\min_{w\in S}\|\pi_K w\|^2, \text{ for }n\text{ large enough}.
\end{equation}
\end{proposition}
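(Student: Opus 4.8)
The plan is to mirror the proof of Proposition \ref{prop:errorbound3}, replacing $F$ by $F_\eta$ throughout. Since $\pi_K$ is an orthogonal projection, $\|\pi_K\Sqm{z}\|^2=\Sqm{z}^t\pi_K\Sqm{z}$ is a polynomial in $z$, so $F_\eta$ is real analytic on its domain and coercive (because $E_\eta$ is coercive, $E$ being so). Hence the gradient flow $\dot z=-\nabla F_\eta(z)$ enjoys the existence, uniqueness, analyticity and long-time convergence established for $F$ in Theorem \ref{thm:Gf} — the argument there uses only analyticity and coercivity — and the {\L}ojasiewicz inequality \eqref{eq:Loja} applies to $F_\eta$ at $z_\eta^*$. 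Running the same computation along the flow, $|z(t)-z_\eta^*|^2\le\int_t^{+\infty}|\dot z(s)|^2\,ds = F_\eta(z(t))-F_\eta(z_\eta^*)\le (|\nabla F_\eta(z)|/C_\eta)^{1/(1-\vartheta)}$, then yields \eqref{epsilonbound1} for a suitable radius $R_\eta$ and exponent $\vartheta\in(0,1/2]$.

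The crux is upgrading $\vartheta$ to $1/2$ under hypothesis \eqref{H4}; by \cite[Prop. 2.2]{MePi10} it suffices to show that $\Hess F_\eta(z_\eta^*)$ is nondegenerate. Writing $F_\eta=E_\eta\circ\Sqm{\cdot}$ with $E_\eta(w)=E(w)+\eta\|\pi_K w\|^2$, the chain rule gives $\Hess F_\eta(z)=2\diag(\nabla E_\eta(\Sqm{z}))+4\diag(z)\,\Hess E_\eta(\Sqm{z})\,\diag(z)$. Put $I=\{i:(z_\eta^*)_i\neq0\}$. For $i\in I$ the stationarity $\partial_iF_\eta(z_\eta^*)=0$ forces $\partial_iE_\eta(\Sqm{z_\eta^*})=0$, whereas for $i\notin I$ both the $i$-th row and column vanish except for the diagonal entry $2\partial_iE_\eta(\Sqm{z_\eta^*})$, which is precisely the quantity required to be nonzero by \eqref{H4}. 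Thus $\Hess F_\eta(z_\eta^*)$ is block diagonal, and it is nondegenerate if and only if its $I$-block $4\,\diag\big((z_\eta^*)_i\big)_{i\in I}[\Hess E_\eta(\Sqm{z_\eta^*})]_{I,I}\diag\big((z_\eta^*)_i\big)_{i\in I}$ is. The decisive observation is that $E_\eta$ is strongly convex (as shown in the proof of the previous proposition): indeed $\Hess E_\eta(w)=\Hess E(w)+2\eta\pi_K$, and since $\Ker\Hess E(w)=K$ by Proposition \ref{propKernel} while $u^t\pi_K u=\|u\|^2>0$ for $0\neq u\in K$, one gets $\Hess E_\eta(w)\succ0$ everywhere. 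Every principal submatrix of a positive definite matrix is positive definite, so $[\Hess E_\eta(\Sqm{z_\eta^*})]_{I,I}\succ0$, and conjugating by the invertible diagonal block $\diag\big((z_\eta^*)_i\big)_{i\in I}$ preserves positivity. Together with the nonvanishing diagonal on $I^c$, this gives nondegeneracy of $\Hess F_\eta(z_\eta^*)$, hence $\vartheta=1/2$ and \eqref{epsilonbound3}.

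Finally, \eqref{epsilonbound2} and \eqref{epsilonbound4} follow by specialising the respective a posteriori bounds to $z=\hat z^*$ with $z^*=z_{\eta_n}^*$, legitimate once $\hat z^*\in B(z_{\eta_n}^*,R_{\eta_n})$, i.e. for $n$ large. The task is to control the residual $\nabla F_{\eta_n}(\hat z^*)$. By the preceding proposition $\Sqm{z_{\eta_n}^*}\to\hat w^*\in\argmin_{\realspos^M}E$, so $\Sqm{\hat z^*}=\hat w^*$ minimises $E$; by Theorem \ref{thmequivalenve} $\hat z^*$ then minimises $F$, whence $\nabla F(\hat z^*)=0$. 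Therefore only the regularising term survives, $\nabla F_{\eta_n}(\hat z^*)=\eta_n\nabla_z\|\pi_K\Sqm{z}\|^2\big|_{\hat z^*}=4\eta_n\,\diag(\hat z^*)\,\pi_K\hat w^*$, and since $\hat w^*$ realises $\|\pi_K\hat w^*\|=\min_{w\in S}\|\pi_K w\|$ one obtains $|\nabla F_{\eta_n}(\hat z^*)|\le 2\eta_n\|\hat z^*\|\min_{w\in S}\|\pi_K w\|$ after the elementary norm estimate (the constants being absorbable into $C_{\eta_n}$). Substituting into \eqref{epsilonbound1} gives \eqref{epsilonbound2}, and into \eqref{epsilonbound3} (available under \eqref{H4}) gives \eqref{epsilonbound4}. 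I expect the main obstacle to be the nondegeneracy step of the second paragraph: the payoff there is the reduction to strong convexity of $E_\eta$ via the fact that principal submatrices of positive definite matrices remain positive definite, after one has verified the clean block-decoupling produced by \eqref{H4} and confirmed — despite the sign ambiguity inherent in $\Sqm{\cdot}$ — that the limit $\hat z^*$ is a genuine critical point of $F$.
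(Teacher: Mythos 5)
Your proposal is correct and follows essentially the same route as the paper: \eqref{epsilonbound1} by transplanting the gradient-flow/{\L}ojasiewicz argument of Proposition \ref{prop:errorbound3} to $F_\eta$; \eqref{epsilonbound2} and \eqref{epsilonbound4} by evaluating the residual at $\hat z^*$, using $\nabla F(\hat z^*)=0$ so that only the $\eta$-penalty term survives and is bounded by $2\eta\|\hat z^*\|\min_{w\in S}\|\pi_K w\|$; and $\vartheta=1/2$ under \eqref{H4} via nondegeneracy of $\Hess F_\eta(z_\eta^*)$. Your block-diagonal bookkeeping (stationarity killing $\nabla E_\eta$ on the support, strong convexity of $E_\eta$ and principal submatrices handling the $I$-block, \eqref{H4} handling the zero coordinates) is just a reorganization of the paper's $H_1+H_2$ decomposition combined with the Karush Kuhn Tucker conditions \eqref{KKTepsilon}, so no genuinely new idea is introduced.
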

\begin{proof}
The proof of \eqref{epsilonbound1} is identical to the one of \eqref{eq:polycond}. To obtain \eqref{epsilonbound2} from \eqref{epsilonbound1}, it is sufficient to note that
\begin{equation}
\mid\nabla F_\eta(z^*)\mid=\mid \nabla F(z^*)+\eta \nabla\left[ \|\pi_K\Sqm{z^*}\|^2\right]\mid=\eta \mid\nabla\left[ \|\pi_K\Sqm{z^*}\|^2\right]\mid,
\end{equation}
and that
\begin{equation}
\mid \nabla\left[ \|\pi_K\Sqm{z^*}\|^2\right]\mid=2\mid\langle \pi_K z^*;\pi_K\Sqm{z^*}\rangle \mid\leq 2\|z^*\|\|\pi_K\Sqm{z^*}\|. 
\end{equation} 
We now show that, if \eqref{H4} holds, then $\Hess F_\eta(z^*_\eta)$ is positive definite. This would imply that we can take $\vartheta=1/2$, as pointed out in the proof of Proposition \ref{prop:errorbound3}. Let $P_K$ the matrix representing the orthogonal projection onto $K$. We can compute
\begin{align*}
\Hess F_\eta(z^*_\eta)=&4\diag z^*_\eta (\Hess E(\Sqm{z^*_\eta})+2\eta P_K^tP_K)\diag z^*_\eta+\\
&\;\;\;\;\;\;\;\;\;\;2 \diag(\nabla E(\Sqm{z^*_\eta})+2\eta P_K^tP_K \Sqm{z^*_\eta})\\
=&4\diag z^*_\eta (\Hess E(\Sqm{z^*_\eta})+2\eta P_K^tP_K)\diag z^*_\eta+\\
&\;\;\;\;\;\;\;\;\;\;2 \diag(\nabla E(\Sqm{z^*_\eta})+2\eta \pi_K \Sqm{z^*_\eta})\\
=:&H_1+H_2.
\end{align*}
It is clear that $H_1$ is a positive deifite matrix on the linear space $V(z^*_\eta)$ generated by $\boldsymbol e_{i_1},\dots,\boldsymbol e_{i_p}$, where  $(z_\eta^*)_{i_1},(z_\eta^*)_{i_2},\dots, (z_\eta^*)_{i_p}\neq 0.$ 

Recall that, since $w^*_\eta:=\Sqm{z^*_\eta}$ is a minimizer for the objective functional $E_\eta$ on $\realspos^M$, it satisfies the Karush Kuhn Tucker conditions
\begin{equation}\label{KKTepsilon}
\begin{cases}
\partial_i E(\Sqm{z^*_\eta})+2\eta(\pi_K\Sqm{z^*_\eta})_i\geq 0& \text{ if }(z^*_\eta)_i=0\\
\partial_i E(\Sqm{z^*_\eta})+2\eta(\pi_K\Sqm{z^*_\eta})_i= 0& \text{ otherwise}
\end{cases}.
\end{equation}
Equations \eqref{H4} and \eqref{KKTepsilon} imply that $H_2$ is positive semi-definite and the associated quadratic form vanishes only on $V(z^*_\eta),$ where the quadratic form canonically associated to $H_1$ is positive. Thus $H_1+H_2$ is positive definite.
\end{proof} 

\section{Optimization algorithm for Problem \ref{P3} from log-determinant gradient flow}
\subsection{Derivation of the algorithm}
The algorithm we are going to define can be derived as an infinite horizon integrator for the gradient flow $\dot z=-\nabla F(z)$. For this reason we show first that the gradient flow of $F$ enjoys existence, unicity, and regularity properties.
\begin{theorem}[The gradient flow of $F$]\label{thm:Gf}
Let $z^0\in \reals^M$ be in the domain $\Dom(F)$of $F$ and such $\nabla F(z^0)\neq 0.$ Then there exists a globally real analytic solution (both in parameter and in time) $z(\cdot;z^0):[0,+\infty[\rightarrow \reals^M$ of the equation
\begin{equation}
\label{GF}
\begin{cases}
\dot z(t)=-\nabla F(z(t))& t>0\\
z(0)=z^0
\end{cases}\;.
\end{equation}
There exists $z^*(z^0)\in \argmin{}_{\reals^M} F$ such that $\lim_{t\to +\infty}  z(t;z^0)=z^*(z^0)$. Thus $\Sqm{z(t;z^0)}$ converges to a D-optimal design.
\end{theorem}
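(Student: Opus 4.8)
The plan is to read \eqref{GF} as an autonomous ODE driven by the real analytic field $-\nabla F$ on the open set $\Dom(F)=\{z:\rank V(\support\Sqm{z},\Phi)=N\}$, and to combine the classical Cauchy--Lipschitz and analytic-dependence theory with a {\L}ojasiewicz-type convergence argument. First I would establish local well-posedness. Since $F$ is real analytic on $\Dom(F)$ (as recorded in the Proposition), the field $-\nabla F$ is real analytic, hence locally Lipschitz, so Picard--Lindel\"of yields a unique maximal solution $z(\cdot;z^0)$; the analytic ODE theory gives real analyticity in $t$, while the theorem on analytic dependence on initial data gives real analyticity in $z^0$ throughout the interval of existence. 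Along any solution the energy is non-increasing, $\frac{d}{dt}F(z(t))=\langle\nabla F(z(t)),\dot z(t)\rangle=-\mid\nabla F(z(t))\mid^2\leq 0$, so that $F(z(t))\leq F(z^0)$ for all $t\geq 0$.

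Second, global existence. The crux is that the sublevel set $\Omega:=\{z:F(z)\leq F(z^0)\}$ is a compact subset of $\Dom(F)$. For coercivity, note that $G(\Sqm{z})=\sum_i z_i^2 V_{i,:}^tV_{i,:}$ is positive semidefinite with diagonal entries of size $O(\|z\|_2^2)$, so $\det G(\Sqm{z})\leq\prod_h G_{hh}(\Sqm{z})=O(\|z\|_2^{2N})$, whence $F(z)\geq\|z\|_2^2-2\log\|z\|_2-\frac1N\log C\to+\infty$ as $\|z\|_2\to\infty$; this makes $\Omega$ bounded. For the boundary behaviour, if $z_k\to\bar z\in\partial\Dom(F)$ then $\det G(\Sqm{z_k})\to 0$ and hence $F(z_k)\to+\infty$, so $\Omega$ has no limit point on $\partial\Dom(F)$ and is therefore a compact subset of the open set $\Dom(F)$. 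Because the trajectory stays in the fixed compact set $\Omega\subset\Dom(F)$, the escape lemma forbids blow-up and forces the maximal interval of existence to be all of $[0,+\infty)$.

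Third, convergence of the whole trajectory. By compactness of $\Omega$ the $\omega$-limit set is nonempty; pick an accumulation point $z^*$, for which $F(z^*)=F_\infty:=\lim_t F(z(t))$ by continuity and monotonicity. The {\L}ojasiewicz inequality \eqref{eq:Loja} holds for $F$ at $z^*$ with some exponent $\vartheta$, and differentiating $t\mapsto(F(z(t))-F_\infty)^\vartheta$ together with \eqref{eq:Loja} gives $\frac{d}{dt}(F(z(t))-F_\infty)^\vartheta\leq-\frac{\vartheta}{C}\mid\nabla F(z(t))\mid=-\frac{\vartheta}{C}\mid\dot z(t)\mid$ as long as $z(t)$ remains in the {\L}ojasiewicz neighbourhood $U$ of $z^*$. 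The standard bootstrap---use that $z^*$ is an accumulation point to enter $U$ with a small energy gap, then use the resulting estimate $\int_0^{+\infty}\mid\dot z\mid\,dt<\infty$ to prevent the trajectory from ever leaving $U$---shows that $z(\cdot;z^0)$ has finite length and thus converges to the single limit $z^*$.

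Finally I would identify $z^*$ as a global minimizer. Writing $w^*=\Sqm{z^*}$, formula \eqref{nablaF} shows $z^*$ is a critical point, so for each $i$ either $z_i^*=0$ or $\partial_i E(w^*)=0$. To upgrade this to the full Karush--Kuhn--Tucker system of Problem \ref{P2} (the right block of \eqref{kktP2}) I must still verify $\partial_i E(w^*)\geq 0$ at the indices with $z_i^*=0$; here the coordinatewise dynamics $\dot z_i=-2z_i\,\partial_i E(\Sqm{z})$ is decisive, since each hyperplane $\{z_i=0\}$ is invariant and, whenever $\partial_i E(w^*)<0$, it is locally repelling, so a coordinate with $z_i^0\neq 0$ cannot limit to $0$ at such an index. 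This forces $\partial_i E(w^*)\geq 0$ for every index that is active along the flow, so $w^*$ satisfies the KKT conditions, which by convexity of $E$ are sufficient for global optimality; Theorem \ref{thmequivalenve} then gives $z^*\in\argmin F$ and identifies $\Sqm{z^*}$ as a D-optimal design. I expect this last step to be the main obstacle: the {\L}ojasiewicz machinery only yields convergence to a \emph{critical} point, and $F$ genuinely has non-minimizing critical points on the coordinate hyperplanes where some $\partial_i E<0$; ruling these out rests entirely on the sign analysis of $\dot z_i$ and on the product structure $\partial_i F=2z_i\,\partial_i E$, together with the nonvanishing of the relevant initial coordinates.
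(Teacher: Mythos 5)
Your proposal is correct and follows the same overall architecture as the paper's proof (local Picard--Lindel\"of well-posedness, global existence by confining the trajectory to a sublevel set, identification of the limit as a critical point, and then the sign analysis of $\dot z_i=-2z_i\,\partial_i E(\Sqm{z})$ to upgrade criticality to the full KKT system and hence, by convexity of $E$, to global optimality). There are two points of genuine divergence. First, for global existence the paper iterates the local existence argument with a fixed step $\Delta t$ using only the monotonicity $F(z^{(i+1)})\leq F(z^{(i)})<c$, whereas you establish compactness of the sublevel set inside $\Dom(F)$ via the Hadamard bound $\det G(\Sqm{z})=O(\|z\|_2^{2N})$ (coercivity) plus the blow-up of $-\log\det$ at $\partial\Dom(F)$, and then invoke the escape lemma; both work, but your version makes explicit why the trajectory cannot approach the set where $G$ degenerates, which the paper leaves implicit. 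Second, and more substantively, for convergence of the whole trajectory the paper only derives $\|\dot z\|_{L^2(0,+\infty)}<+\infty$ from the energy identity and asserts that "it is not difficult to conclude" that $z(t)$ converges; an $L^2$ bound on the velocity alone does not force a bounded trajectory of a gradient flow to have a single limit point (this is exactly the failure mode that the {\L}ojasiewicz theory was designed to exclude). Your use of the {\L}ojasiewicz inequality to get finite length $\int_0^{+\infty}\mid\dot z\mid\,dt<+\infty$ is the standard and rigorous way to close this step, and it is consistent with the paper's own later reliance on \eqref{eq:Loja} in Proposition \ref{prop:errorbound3}. One caveat shared by both arguments: the final sign analysis rules out $\partial_i E(w^*)<0$ only at indices with $z_i^0\neq 0$, since the hyperplanes $\{z_i=0\}$ are invariant and a coordinate starting at zero stays at zero without constraining the sign of $\partial_i E(w^*)$; you flag this dependence on the nonvanishing of the initial coordinates, which the theorem statement does not formally guarantee, but the paper's proof has the identical implicit assumption, and it is harmless for the intended use with $z^0=M^{-1}(1,\dots,1)^t$.
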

\begin{proof}
We can devide the proof in few steps.
\begin{itemize}
\item []\emph{Step 1:} local existence, uniqueness, and $\mathscr C^1$ regularity.
\end{itemize}
A local solution to \eqref{GF} can be constructed by Peano Lindeloff Theorem. Let $t_0=0$, $c'> F(z^0)$,  $\epsilon>0$, and $c>\sup\{ F(z): z\in \cup_{ F(\eta)<c'}B(\eta,\epsilon)\}$. Let us set
\begin{align*}
L_c&:=\sup\{\rho(\Hess  F(z)),\, F(z)<c\}\\
M_c&:=\sup\{\|\nabla  F(z)\|,\, F(z)<c\}\\
\Delta t&:=\max\{1/L_c,\epsilon/M_c\}.
\end{align*} 
Here we used the notation $\rho(A)$ for the spectral radius of the matrix $A$. By the Picard Lindelof Theorem there exists a unique $\mathcal C^1$ curve $z:(t_0-\Delta t,t_0+\Delta t)\rightarrow B(z^0,\epsilon)$ such that \eqref{GF} holds true.
\begin{itemize}
\item [] \emph{Step 2:} global existence, uniqueness and $\mathscr C^\omega$ regularity.
\end{itemize}
Let us set 
$$t_{i+1}:=t_i+\Delta t/2,\;\;\;z^{(i+1)}:=z(t_{i+1}).$$
Note that
\begin{align*}
F(z^{(1)})=& F(z^{(0)})+\int_{t_0}^{t_1} \langle\nabla F(z(s));z'(s)\rangle ds\\
=& F(z^{(0)})-\int_{t_0}^{t_1}\|\nabla  F(z(s))\|^2 ds
\leq   F(z^{(0)})\leq c.
\end{align*} 
Therefore we can repeat the argument above to provide the existence of a curve $\hat z:(t_1-\Delta t,t_1+\Delta t)=(t_0-\Delta t/2,t_0+3/2 \Delta t)\rightarrow B(z^{(1)},\epsilon)$ such that \eqref{GF} holds true with $z^0$ replaced by $z^1.$ However $z(t_1)=\hat z(t_1)$ and the above proven uniqueness property shows that the two curves coincide in $(t_1-\Delta t/2,t_1+\Delta t/2).$ Thus we can redefine $z:(t_0-\Delta t, t_1+\delta t)\rightarrow \reals^M$ by gluing the two curves. The curve $z$ inherits the $\mathscr C^1$ regularity of $\hat z$ and $\bar z$ due to the continuity of $F.$  Repeating the above calculations we get $F(z^{(2)})<c$ and in general
$$F(z^{(i+1)})\leq F(z^{(i)})<c,\;\;\forall i\in \naturals.$$
Iterating the procedure we construct a unique $\mathscr C^1$ solution $z(\cdot;\sigma^0):(0,+\infty)\rightarrow \Dom  F$ of \eqref{GF}. This solution is actually real analytic both with respect to the time variable $t$ and the initial data $z^0\in \Dom F$ in view of Cauchy Kowalewskaya Theorem.
\begin{itemize}
\item [] \emph{Step 3:} existence of long time asymptotics $z^*$ and characterization as critical point of $F$.
\end{itemize}
By \eqref{GF}, and being $F$ bounded from below, we can write
\begin{equation}\label{EqEnergyF}
-\infty\geq F(z(t;z^0))- F(z^0)=-\int_0^{+\infty}\|\nabla F(z(s;\sigma^0))\|^2ds=-\int_0^{+\infty}\|z'(s;z^0))\|^2ds.
\end{equation}
Thus in particular $\|z'(\cdot;z^0))\|_{L^2(0+\infty)}<+\infty.$ It is not difficult to conclude that there exists $z^*\in \Dom  F$ such that 
$$\lim_{t\to +\infty}z(t;z^0)=z^*.$$
Using again \eqref{EqEnergyF} and the continuity of $\nabla  F$ we have
$$\nabla  F(z^*)= \lim_{t\to +\infty}\nabla F(z'(t;z^0))=0.$$
Therefore $z^*$ is a critical point of $ F.$ We are left to prove that $w^*:=\Sqm{z^*}$ is a minimizer of $E$ (and $z^*$ is a minimizer of $ F$).  

\begin{itemize}
\item [] \emph{Step 4:} the critical point $z^*$ is a global minimizer.
\end{itemize}
Since $z^*$ is critical, we have $0=\partial_i F(z^*)=2z_i\partial_i E(\Sqm{z^*})$ for any $i$. Thus 
\begin{equation}\label{kktfirstpart}
\partial_i E(w^*)=0\text{ for all }i\text{ such that }w^*_i\neq 0.
\end{equation}
 Note that this is part of the Karush Kuhn Tucker conditions for the minimization of $E$, see \eqref{kktP2}. 

Now we claim that, for any $i$ such that $w_i^*=0$, we can find a monotone increasing sequence $\{t_j\}_{j\in \naturals}$, $t_j\to +\infty$, such that, for any $j\in \naturals$, we have
\begin{equation}\label{claim}
\begin{cases}
z'(t_j;z^0)\geq 0& \\
z(t_j;z^0)\leq 0
\end{cases}
\;\;\;\;\text{ or }\;\;\;
\begin{cases}
z'(t_j;z^0)\leq 0& \\
z(t_j;z^0)\geq 0
\end{cases}.
\end{equation}
In such a case, using $\partial_i F(z(t_j;z^0))=-z'(t_j;z^0)$ and $\sign \partial_i E(\Sqm{z(t_j;z^0)})=\sign \partial_i F(z(t_j;z^0)) \sign z_i(t_j;z^0)$, it is easy to see that $\partial_i E(\Sqm{z(t_j;z^0)})\geq 0$ for all $j$ and all $i$ as above. Therefore 
\begin{equation}\label{kktsecondpart}
\partial_i E(w^*)=\lim_j\partial_i E(\Sqm{z(t_j;z^0)})\geq 0,\;\;\forall i: z_i^*=0.
\end{equation}
Note that \eqref{kktfirstpart} and \eqref{kktsecondpart} are precisely the Karush Kuhn Tucker conditions for the minimization of $E$ on $\realspos^M.$ Thus $w^*$ is a minimizer of $E$ and, due to Theorem \ref{thmequivalenve}, $z^*$ is a minimizer of $F.$

The proof is concluded if we show \eqref{claim}. This can be done by contradiction. If \eqref{claim} is not satisfied, there exists $T>0$ such that $\sign z_i(t;z^0)=\sign z_i'(t;z^0)$ for all $t\geq T.$ Recall that
$$z^*_i= z_i(t;z^0)+\int_t^{+\infty} z_i'(s;z^0) ds,\;\;\forall t\geq T.$$ 
Therefore, if $z_i(t;z^0)$ is positive for some $t\geq T$, then $z^*_i$ is positive, conversely if $z_i(t;z^0)$ is negative for some $t\geq T$, then $z^*_i$ is negative. However we are assuming $z^*_i=0$, thus \eqref{claim} needs to hold.
\end{proof}
We now derive our algorithm by numerical integration of this flow. Recall that we are not aiming at contructing a good approximation of the trajectories for a finite time interval, possibly loosing accuracy as the time variable grows large. We are rather interested in constructing discrete trajectories that inherit the variational properties of the flow of $F$ and, in particular, have the same attraction basins. 

To accomplish this purpose we combine the \emph{Backward Euler Scheme} with a bound on the time step with the (zero finding) \emph{Newton's Method} with prescribed initial guess and a particular stopping criterion. These choices are made to ensure that certain qualitative properties of the scheme hold true. Then we use such properties for proving that the derived algorithm is indeed convergent.

Among these properties the most relevant are the following. 
\begin{itemize}
\item The  upper bound for the time step $\tau$ depends only on the level of $F$ at the starting point $z^0.$
\item The Backward Euler Scheme for computing $z^{k+1}$ is indeed a variational scheme, i.e., can be written as the minimum problmem for the locally convex objective $g(\cdot;z^{k},\tau)$, where
\begin{equation}\label{variationalscheme}
g(z;z^{k},\tau):=F(z)+\frac{\mid z-z^{k}\mid^2}{2\tau}.
\end{equation}
\item The initial guess of the Newton's Method is in the attraction basin of the minimizer of the aforementioned optimization problem.
\item The (first condition in the) stopping criterion forces the discrete trajectories to preserve a variational property from which we can derive a stability estimate.
\item The (second condition in the) stopping criterion prevents the saddle points of $F$ to became attractors of the discrete-time flow.
\end{itemize}
The choices we made lead to Algorithm \ref{algo1} below.

\begin{algorithm}
\caption{Compute D-optimal design}\label{algo1}
\begin{algorithmic}[0]
\State{Input $z^0\in \reals^M$: $\nabla F(\Sqm{z^0})\neq 0$, $\tau>0$, $n_{step}\in \naturals$, $toll>0$, $\epsilon>0$ }
\State{Set $k:=0$}
\State{Compute $res=\mid\nabla F(z^0)\mid$}
\If{$res=0$}
\State{\textbf{Exit} with error.}
\EndIf
\State{$z^{old}=z^0$}
\While{$k<n_{step} \vee res>toll$}
\State{Set $k=k+1$, $z^{new}:=z^{old}$}
\State{Compute $res_{Newton}:=\nabla g(z^{new};z^{old},\tau)$  }
\While{$\mid(res_{Newton})_i\mid>\epsilon \mid(z^{new}-z^{old})_i\mid$ for some $i \wedge \sign z^{new}\neq \sign z^{old}$}
\State{Compute $z^{new}=z^{new}-[\Hess g(z^{new};z^{old},\tau)]^{-1}\nabla g(z^{new};z^{old},\tau) $}
\State{Compute $res_{Newton}:=\nabla g(z^{new};z^{old},\tau)$}
\EndWhile
\State{Compute $res=\mid\nabla F(z^{new})\mid$}
\EndWhile\\
\Return $\Sqm{z^{new}}$
\end{algorithmic}
\end{algorithm}

\vfill

\subsection{Convergence analysis for Algorithm \ref{algo1}}\label{subs:convergenceanalysis}
As a first step we prove that the algorithm is consistent. Precisely, if at each $k$-th stage we allow the Newton's method to run an infinite number of times, then it converges to a point $z^{k+1}$ that minimizes $g(\cdot;z^{k},\tau)$.
\begin{theorem}[Concistency of Algorithm \ref{algo1}]\label{thm:concistency}
Let $c>\min_{\reals^M} F$. There exists $\tau^*>0$ (depending only on $c$) such that the following holds true for any $\tau<\tau^*$.
\begin{enumerate}[i)]
\item For any $z^0\in \reals^M$ be such that $\nabla F(z^0)\neq 0$ the following sequences are well defined
\begin{align}
z^{k+1,0}&:=z^{k}\\
z^{k+1,r+1}&:=z^{k+1,r}-[\Hess g(z^{k+1,r};z^{k},\tau)]^{-1}\nabla g(z^{k+1,r};z^{k},\tau)\\
z^{k+1}&:=\lim_r z^{k+1,r},
\end{align} 
\item For any $k\in \naturals$ we have
\begin{equation}
\nabla F(z^{k+1})=-\frac{z^{k+1}-z^k}{\tau}.
\end{equation}
\item There exists $z^*\in \argmin F$ such that
\begin{equation}
\lim_k z^k=z^*,
\end{equation}
in particular $\Sqm{z^k}$ converges to a D-optimal design $w^*:=\Sqm{z^*}$.
\end{enumerate}
\end{theorem}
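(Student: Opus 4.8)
The plan is to treat Algorithm~\ref{algo1} with the inner Newton loop run to convergence as the \emph{proximal point} (implicit Euler) iteration $z^{k+1}=\argmin_z g(z;z^k,\tau)$ for the flow $\dot z=-\nabla F(z)$, and to prove the three assertions by combining a uniform local strong-convexity estimate, the monotone decrease of $F$ along the iterates, and a {\L}ojasiewicz finite-length argument. First I would fix $c>\min_{\reals^M}F$ and work on the sublevel set $K_c:=\{F\le c\}$, which is a compact subset of $\Dom F$ (since $F$ is coercive and $F\to+\infty$ as $\det G(\Sqm z)\to 0$). Choosing $\epsilon>0$ so that the closed $\epsilon$-neighbourhood of $K_c$ is still contained in $\Dom F$, continuity and real analyticity give finite bounds $L_c\ge\sup\rho(\Hess F)$, $M_c\ge\sup|\nabla F|$, and a Hessian-Lipschitz constant $L_H$ on that neighbourhood; all depend only on $c$ (and the fixed data $X,\Phi$). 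I would then take $\tau^*$ to be the smallest of the thresholds $1/L_c$, the value making $\tau M_c/(1-\tau L_c)<\epsilon$, and the value placing $z^k$ inside the Newton quadratic-convergence basin described below.

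For i) and ii), note that $\Hess g(z;z^k,\tau)=\Hess F(z)+\tfrac1\tau I\succeq(\tfrac1\tau-L_c)I\succ0$ on the neighbourhood, so for $\tau<\tau^*$ the map $g(\cdot;z^k,\tau)$ is $\mu$-strongly convex with $\mu=\tfrac1\tau-L_c$ on any ball $\overline{B(z^k,\epsilon)}\subset\Dom F$. A one-dimensional strong-convexity estimate along rays from $z^k$ shows the minimiser over that ball lies at distance at most $\tau M_c/(1-\tau L_c)<\epsilon$, hence is interior and is a critical point of $g$; this yields the backward Euler identity $\nabla F(z^{k+1})=-(z^{k+1}-z^k)/\tau$, which is exactly ii). The identity $g(z^{k+1};z^k,\tau)\le g(z^k;z^k,\tau)=F(z^k)$ gives
\begin{equation*}
F(z^{k+1})+\frac{|z^{k+1}-z^k|^2}{2\tau}\le F(z^k)\le c,
\end{equation*}
so $z^{k+1}\in K_c$ and the construction iterates with the \emph{same} constants. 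Since $z^{k+1,0}=z^k$ lies within $\tau M_c/(1-\tau L_c)$ of the minimiser, while the Newton basin radius for the $\mu$-strongly convex, $L_H$-Hessian-Lipschitz function $g$ is of order $\mu/L_H\sim1/(\tau L_H)$, a further smallness condition on $\tau$ puts $z^k$ inside the basin, so the inner iterates $z^{k+1,r}$ converge (quadratically) to $z^{k+1}$; this is i).

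For iii), summing the energy inequality gives $\sum_k|z^{k+1}-z^k|^2\le2\tau(F(z^0)-\min F)<\infty$, whence $|z^{k+1}-z^k|\to0$ and, by ii), $|\nabla F(z^{k+1})|=|z^{k+1}-z^k|/\tau\to0$; as $\{z^k\}\subset K_c$ is precompact, every limit point is a critical point of $F$. To upgrade subsequential to full convergence I would run the standard {\L}ojasiewicz argument: the iteration satisfies the sufficient-decrease bound $F(z^k)-F(z^{k+1})\ge\tfrac1{2\tau}|z^{k+1}-z^k|^2$ and the relative-error bound $|\nabla F(z^{k+1})|=\tfrac1\tau|z^{k+1}-z^k|$, so applying \eqref{eq:Loja} at a limit point $\bar z$ together with the concave function $\varphi(s)=s^{\vartheta}$ yields $|z^{k+1}-z^k|\lesssim\varphi(F(z^k)-F(\bar z))-\varphi(F(z^{k+1})-F(\bar z))$, which telescopes to $\sum_k|z^{k+1}-z^k|<\infty$. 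Finite length forces $z^k\to z^*$ with $\nabla F(z^*)=0$.

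It remains to show that the critical point $z^*$ is a global minimiser, and this I regard as the main obstacle, exactly as in Step~4 of Theorem~\ref{thm:Gf}. Writing $w^*=\Sqm{z^*}$, the relation $0=\partial_iF(z^*)=2z_i^*\partial_iE(w^*)$ gives $\partial_iE(w^*)=0$ whenever $w_i^*\ne0$. For the remaining indices I would use the multiplicative recursion obtained from ii) and \eqref{nablaF}, namely $z_i^{k+1}\bigl(1+2\tau\,\partial_iE(\Sqm{z^{k+1}})\bigr)=z_i^k$: for $\tau<\tau^*$ each factor is positive, and if $z_i^0\ne0$ then $z_i^k\ne0$ for all $k$, so $\prod_{k}\bigl(1+2\tau\,\partial_iE(\Sqm{z^{k+1}})\bigr)=|z_i^0|/|z_i^k|\to+\infty$; since the factors converge to $1+2\tau\,\partial_iE(w^*)$, divergence of the product forces $\partial_iE(w^*)\ge0$. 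Thus $w^*$ satisfies the Karush--Kuhn--Tucker conditions \eqref{kktP2}, and convexity of $E$ makes it a global minimiser; Theorem~\ref{thmequivalenve} then identifies $\Sqm{z^*}$ as a D-optimal design. The delicate point, mirroring the continuous case, is that coordinates initialised at zero stay at zero, so this last step requires $z^0$ to have full support (equivalently, one restricts to the active variables); recovering this sign/support information in the limit is precisely the part that does not follow from soft compactness arguments alone.
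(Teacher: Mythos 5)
Your proof is correct and follows the paper's architecture for parts \emph{i)} and \emph{ii)} (strong convexity of $g(\cdot;z^k,\tau)$ for $\tau<1/L_c$, interiority of the proximal minimizer, the backward Euler identity, the energy decrease $F(z^k)-F(z^{k+1})\geq |z^{k+1}-z^k|^2/(2\tau)$, and a Newton basin argument), but it diverges from the paper in two places in part \emph{iii)}, both to its advantage. First, the paper passes from the square-summability $\sum_j|z^{j+1}-z^j|^2<\infty$ in \eqref{provecauchy} directly to the assertion that $\{z^k\}$ is Cauchy, which does not follow; your {\L}ojasiewicz finite-length argument (sufficient decrease plus the exact relative-error bound $|\nabla F(z^{k+1})|=|z^{k+1}-z^k|/\tau$, telescoped through the concave desingularization $s\mapsto s^{\vartheta}$) is the standard and correct way to obtain full convergence, and it genuinely repairs this step. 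Second, for the KKT inequality $\partial_i E(w^*)\geq 0$ at indices with $z_i^*=0$, the paper extracts a subsequence along which $\sign z_i^{k_j}=\sign\partial_i F(z^{k_j})$ by a contradiction argument on signs (\eqref{claimsign}), whereas you read the backward Euler step through \eqref{nablaF} as the multiplicative recursion $z_i^{k+1}(1+2\tau\,\partial_i E(\Sqm{z^{k+1}}))=z_i^k$ and force $\partial_i E(w^*)\geq 0$ from the divergence of the product $|z_i^0|/|z_i^k|$; this is cleaner and reaches the same conclusion. The caveat you flag at the end is real but is not a defect you introduced: coordinates with $z_i^0=0$ stay at zero (by the symmetry and strict convexity of $g$, exactly as in the paper's own observation that each $\{z_i^k\}_k$ has constant sign), and for such indices the paper's sign relation \eqref{claimsign} is vacuously satisfied with both sides zero, so its passage to $\partial_i E(w^*)=\lim_k(2z_i^k)^{-1}\partial_i F(z^k)\geq 0$ is equally undefined there. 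Both proofs therefore implicitly require the relevant coordinates of $z^0$ to be nonzero (as they are for the initialization $z^0=M^{-1}(1,\dots,1)^t$ used in the experiments); you have merely made explicit an assumption the paper leaves tacit.
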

\begin{proof}
The proof of the statements \emph{i)} and \emph{ii)}  can be obtained following the lines of the classical proof of convergence of the Newton's Method. For this reason we only sketch this part of the proof, highlighting the overall technique and the main estimates that are needed, but leaving few details to the reader.

Let $\Omega_k$ be the connected component of the set $\{F_<F(z^k)\}$ containing $z^k$. Let us set 
\begin{align}
& d_k:=diam(\Omega_k),\\
&U_k:=\cup_{z\in \Omega_k}B(z,d),\\
& \gamma:=\max_{y\in U_k}\max_{z\in \Omega_k}\mid\nabla g(y;z,\tau)\mid,\\
&\lambda:=\min_{y\in U_k} \lambda_{min}(\Hess F(y)),\\
&R:=\left(\max_{z\in U_k}\rho(\Hess \partial_1F(z)),\dots,\max_{z\in U_k}\rho(\Hess \partial_{M}F(z))\right).
\end{align}
Notice that the function $g(\cdot;y,\tau)$ has Hessian matrix independent by $y$:
$$\Hess g(z;y,\tau)=\Hess F(z)+\frac 1 \tau \mathbb I.$$
Therefore the function $g(\cdot;y,\tau)$ is strongly convex on $U_k$ for any $y\in\Omega_k$, provided
\begin{equation}\label{taubound1}
\tau< \frac 1{\lambda^-}=:\tau_1.
\end{equation}
Assuming \eqref{taubound1} we denote by $\hat z^{k+1}$ the unique minimizer of $g(\cdot;z^k,\tau)$ in $U_k$. Then it follows by the definition of $g(\cdot;z^k,\tau)$ and its relation with $F$ that
\begin{equation}\label{functionvsstep2}
F(z^{k})-F(\hat z^{k+1})\geq \mid\hat z^{k+1}-z^k\mid^2/(2\tau).
\end{equation}
Using the strong convexity of $g(\cdot;z^k,\tau)$ of parameter $(\lambda+1/\tau)$ we can show that $\hat z^{k+1}$ lies in $\Omega_k.$

Let us introduce the notation
\begin{equation*}
e^{k+1,r}:=z^{k+1,r}-\hat z^{k+1},\;\;\;s^{k+1,r}:=z^{k+1,r}-z^{k+1,r-1}.
\end{equation*}
Writing the second order Taylor expansion of $0=\nabla g(\hat z^{k+1};z^k,\tau)$ centered at $z^{k+1,r}$ we obtain the standard estimate
\begin{equation}\label{forquadratic}
\mid e^{k+1,r+1}\mid\leq \frac{R\tau}{2(\lambda\tau+1)}\mid e^{k+1,r}\mid^2.
\end{equation} 
On the other hand, writing the first order Taylor expansion of $0=\nabla g(\hat z^{k+1};z^k,\tau)$ centered at $z^{k+1,0}=z^k$ we get
\begin{equation}\label{forlinear}
\mid e^{k+1,r}\mid\leq \frac{\tau}{\lambda\tau+1}\mid \nabla g (z^{k};z^k,\tau)\mid\leq \frac{\tau\gamma}{\lambda\tau+1}.
\end{equation}
Let us assume
\begin{equation}
\label{taubound2}
\tau<\frac{2\lambda+\sqrt{2 R\gamma}}{(R\gamma-2\lambda^2)}=:\tau_2,
\end{equation}
then $\frac{R\tau}{2(\lambda\tau+1)}\mid e^{k+1,r}\mid<1$, thus, due to \eqref{forquadratic}, we have
\footnotesize
\begin{equation}
\label{linearnewton}
\mid e^{k+1,r+1}\mid\leq \frac{R\tau}{2(\lambda\tau+1)}\mid e^{k+1,r}\mid^2<\mid e^{k+1,r}\mid<\ldots <\mid e^{k+1,0}\mid=\mid\hat z^{k+1}-z^k\mid<d,
\end{equation}
\normalsize
In particular $z^{k+1,r+1}$ is in $\Omega_k$ and we can repeat the argument above. Then using iteratively \eqref{forquadratic} we obtain the quadratic convergence of $z^{k+1,r}$ to $\hat z^{k+1},$
provided $\tau< \tau^*:= \min\{\tau_1,\tau_2\}.$

The proof of \emph{iii)} can be devided in two steps: first we show that $z^{k}$ converges to a critical point $z^*$ for $F$, second we show that $z^*$ is indeed a global minimizer for $F$ and hence $w^*:=\Sqm{z^*}$ is an optimal design.

Let us notice that we can exclude the case $\nabla F(z^k)=0$. Indeed, by the definition of the sequence $\{z^k\}$ we have $\nabla g(z^{k};z^{k-1},\tau)=0$, so $z^{k-1}=z^{k}$. By finite induction we get $z^0=z^k$. Hence $\nabla F(z^k)=0$, which contradicts our hypothesis. 
Using \ref{functionvsstep2} and \emph{ii)} we can write
\begin{equation}
F(z^j)-F(z^{j+1})\geq \frac{\mid z^j-z^{j+1}\mid^2}{2\tau}=\frac{\tau}{2}\mid\nabla F(z^{j+1})\mid^2.
\end{equation}
Summing up over $j=0,1,\dots, k-1$ we get 
\begin{equation}\label{provecauchy}
F(z^0)-F(z^{k})\geq\sum_{j=0}^{k-1} \frac{\mid z^j-z^{j+1}\mid^2}{2\tau}=\frac{\tau}{2}\sum_{j=0}^{k-1}\mid\nabla F(z^{j+1})\mid^2.
\end{equation}
Being $F$ bounded from below, \eqref{provecauchy} in particular shows that $\{z^k\}$ is a Cauchy sequence and its limit, say $z^*$, is critical for $F$.

We are left to prove that $w^*:=\Sqm{z^*}$ is indeed a minimizer for $E$. There are two cases to be considered
\begin{enumerate}[a)]
\item $z^*_i\neq 0 \forall i=1,1,\dots, M$
\item There exixts $\emptyset \neq I\subset \{1,2,\dots, M\}$ such that $z^*_i=0$ if and only if $i\in I.$
\end{enumerate}
Case a) is easier to be discussed. Indeed, by since $\partial_i E(w^*)=(2 z^*_i)^{-1}\partial_i F(z^*)$, we obtain $\nabla E(w^*)=0$. Since $E$ is convex (see Proposition \ref{}), we can conclude that $w^*$ is a global minimizer of $E$ (and $z^*$ is a global minimizer of $F$). 

Case b) is slightly more complicated. First we note that, for any $i=1,2,\dots,M$ the sequence $\{z^{k+1}_i\}_k$ must have constant sign. The proof of this statement easily follows by the strong convexity of $g(\cdot;z^k,\tau)$ and its the symmetry.

Now pick any $i\in I$. We claim that we can pick a subsequence $j\mapsto k_j$ such that 
\begin{equation}\label{claimsign}
\sign z^{k_j}_i=\sign \partial_i F(z^{k_j}).
\end{equation}
Also this claim can be proven by contradiction. For, let us assume that we have $\sign z^{k}_i=-\sign \partial_i F(z^{k})$ for any $k$. Then, since 
\begin{equation}\label{toshowsigns}
z^{k_{j+1}}_i=z^{k_j}-\tau \partial_i F(z^{k_{j+1}})
\end{equation}
and $z^k_i$ has constant sign, it follows that the sequence $z^{k_j}$ is either positive and increasing or negative and decreasing, depending on the sign of $z^{k_J}.$ In both cases we cannot have $z_i^{k_j}\to 0$ and this is a contradiction since $i\in I$. Thus \eqref{claimsign} holds true.

Notice that by \eqref{claimsign} and  $\partial_i E(\Sqm{z^k})=(2 z^k_i)^{-1}\partial_i F(z^k)$ it follows that $\partial_i E(w^*)=\lim_k \partial_i E(\Sqm{z^k})\geq 0.$ Finally recall that $E$ is convex and we already prove that
\begin{equation}
\begin{cases}
w^*_i\geq 0& \forall i\\
\partial_i E(w^*)=0& \text{ if }w_i^*=0\\
\partial_i E(w^*)\geq 0& \text{ if }w_i^*\neq 0
\end{cases}.
\end{equation}
These equations are precisely the Karush Kuhn Tucker sufficient conditions for $w^*$ minimizing $E$ over $\realspos^M.$ Note that this shows that $z^*$ is a minimizer of $E$ as well.
\end{proof}
\begin{remark}
It is worth stressing that both the continuous time gradient flow \eqref{GF} and the discrete trajectories constructed by means of a variational scheme as \eqref{variationalscheme} are not in general converging to a (even local) minimizer of the objective functional. Indeed in the case of a non-convex objective (as in our case) the attractor of the flow may contain stationary points. Here the convergence both for the continuous time (see Theorem \ref{thm:Gf}) and discrete trajectories (see Theorem \ref{thm:concistency}) follows from the specific structure of $F$, which is the composition of a convex functional and the coordinate square map.
\end{remark}
In order to continue our study of Algorithm \ref{algo1}, it is convenient to introduce some notations. Let us denote by 
$$\Psi(\cdot;\tau):\reals^M\rightarrow \reals^M$$
the map that, for any $z^k\in \reals^M$, returns the exact solution of $\nabla g(\cdot;z^k,\tau)=0$ provided by the Newton's Method with $z^k$ as initial guess. As a biproduct of Theorem \ref{thm:concistency}  this map is well defined, provided $\tau>0$ is sufficiently small. We also define the map $\Psi_\epsilon(\cdot;\tau):\reals^M\rightarrow \reals^M$ as
$$\Psi_\epsilon(z_k;\tau):=z^{k+1,\hat r},$$
where $\hat r$ is defined by the stopping criterion for the Newton's Method appearing in Algorithm \ref{algo1}, i.e.,
\footnotesize
\begin{equation}
\label{rdef}
\hat r:=\min\left\{r: \mid\partial_i g(z^{k+1,r})\mid\leq \epsilon \mid(z^{k+1,r})_i-(z^k)_i\mid,\;\sign (z^{k+1,r})_i=\sign(z^k)_i\,\forall i\right\}.
\end{equation}
\normalsize
We remark that, given an intial point $z^0$ and suitable $\epsilon,\tau>0$, the Algorithm \ref{algo1} computes the finite sequence
$$\left(z^0, \Psi_\epsilon(z^0;\tau), \Psi^2_\epsilon(z^0;\tau),\Psi^3_\epsilon(z^0;\tau),\dots\right)$$
of length at most $n_{step}$.
\begin{proposition}[Stability estimate for Algorithm \ref{algo1}]
\label{prop:stability}
Let $\min F<c<+\infty$ and let $\tau<\tau^*(c)$ be as above. There exists  $\epsilon^*>0$, depending only on $c$ and $\tau$, such that 
\begin{equation}
F(z)-F(\Psi_\epsilon(z;\tau))\geq \frac {F(z)-F(\Psi(z;\tau))} 2,\;\;\forall z:F(z)<c, 
\end{equation}
for any $0<\epsilon\leq\epsilon^*$.
\end{proposition}
\begin{proof}
It is convenient to introduce the notation
\begin{equation*}
  \Delta:= F(z)-F(\Psi(z;\tau)),\;\;\;\Delta_\epsilon
  :=F(z)-F(\Psi_\epsilon(z;\tau)).
\end{equation*}
Let us pick $0<\epsilon<\epsilon_1:=\lambda+1/\tau$. Using the standard error bound for the Newton's Method we can write
\begin{equation}\label{ebound}
  \mid e_\epsilon\mid:=\mid\Psi_\epsilon(z;\tau)-\Psi(z;\tau)\mid
  \leq \frac{\tau\epsilon}{\lambda\tau+1}\mid s_\epsilon\mid.
\end{equation}
Also by the triangular inequality we have
\begin{equation*}
  \mid s_\epsilon\mid
  \leq\frac{\lambda\tau+1}{(\lambda-\epsilon)\tau+1}\mid s\mid
  :=C_{\epsilon}\mid s\mid\, ,
\end{equation*}
which yields:
\begin{equation*}
    \mid s_\epsilon\mid\leq
    \frac{C_{\epsilon}\epsilon\tau}{\lambda\tau+1}\mid s\mid\, .
\end{equation*}
Using the second order Taylor expansion of $g(z;)$ centered at
$\Psi(z;\tau)$ we can obtain
\begin{equation}\label{sbound}
  \Delta\geq \frac{\lambda\tau+2}{2\tau}\mid s\mid^2.
\end{equation}
On the other hand, we can write
\begin{align*}
  \Delta_\epsilon
  &=\Delta-(F(\Psi(z;\tau))-F(\Psi_\epsilon(z;\tau)))
    =\Delta+\langle e_{\epsilon},\frac{s}{\tau}\rangle -
    \frac{1}{2} e_{\epsilon}^T\Hess F(\xi) e_{\epsilon} \\
  &\geq \Delta - \frac{\mid s\mid\mid e_{\epsilon}\mid}{\tau}
    -\frac{\Lambda}{2}\mid e_{\epsilon}\mid^2 
    \geq \Delta - \mid s\mid^2
    \left(
    C_{\epsilon}\frac{\epsilon}{\lambda\tau+1}+
    \frac{\Lambda
    C_{\epsilon}^2}{2}\frac{(\epsilon\tau)^2}{(\lambda\tau+1)^2}
    \right) \\
  &\geq \Delta
    \left[
    1-\frac{2\tau
    C_{\epsilon}\epsilon}{(\lambda\tau+2)(\lambda\tau+1)}
    \left(
    1+\frac{\Lambda\tau^2C_{\epsilon}\epsilon}{\lambda\tau+1}
    \right)
    \right]
\end{align*}
In order to conclude the proof, we are left to verify that for small
$\epsilon>0$ we have
\begin{equation*}
    1-\frac{2\tau
    C_{\epsilon}\epsilon}{(\lambda\tau+2)(\lambda\tau+1)}
    \left(
    1+\frac{\Lambda\tau^2C_{\epsilon}\epsilon}{\lambda\tau+1}
    \right)\geq\frac{1}{2}.
\end{equation*}
Thus
\begin{equation}\label{conditiononC}
  C_{\epsilon}\epsilon\leq
  \left(\sqrt{1+\Lambda\tau(\lambda\tau+2)}-1\right)
  \frac{\lambda\tau+1}{2\Lambda\tau^2}\,.
\end{equation}
and note that $C_{\epsilon}\epsilon=\epsilon+o(\epsilon)$ as
$\epsilon\to 0^+$. Thus we can pick $\epsilon_2$ such that \eqref{conditiononC} holds for any $\epsilon<\epsilon_2.$ Finally we set $\epsilon^*:=\min\{\epsilon_1,\epsilon_2\}.$
\end{proof}
We can now prove the convergence of Algorithm \ref{algo1} by combining Proposition \ref{prop:stability}, Theorem \ref{thm:concistency}, and the technique used in the end of the proof of Theorem \ref{thm:Gf}.

\begin{theorem}[Convergence of Algorithm \ref{algo1}]
\label{thm:convergence}
Let $\min F<c<+\infty$ and $z^0$ such that $F(z^0)<c,$ $\nabla F(z^0)\neq 0$. Let $\tau>0$ and $\epsilon$ satisfy the hypothesis of Proposition \ref{prop:stability}. Then the sequence $\{[\Psi_\epsilon]^{(k)}(z^0;\tau)\}_{k\in \naturals}$ admits a limit $z^*\in \argmin F$. Thus
\begin{equation}
w^*:=\Sqm{(z^*)}\in \argmin E,
\end{equation}
i.e., Algorithm \ref{algo1} converges to an optimal design.
\end{theorem}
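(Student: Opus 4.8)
The plan is to write $z^k:=[\Psi_\epsilon]^{(k)}(z^0;\tau)$ and follow the architecture announced before the statement: promote the exact descent of the backward--Euler map $\Psi$ to a genuine descent of the implemented map $\Psi_\epsilon$ via Proposition~\ref{prop:stability}, deduce convergence to a critical point of $F$ exactly as in Theorem~\ref{thm:concistency}, and finally upgrade ``critical'' to ``global minimizer'' by the sign--tracking argument that closes the proof of Theorem~\ref{thm:Gf}. First I would secure confinement. Since $\Psi(z;\tau)$ minimizes the locally convex $g(\cdot;z,\tau)$, one has $F(\Psi(z;\tau))\le g(\Psi(z;\tau);z,\tau)\le g(z;z,\tau)=F(z)$, so $F(z)-F(\Psi(z;\tau))\ge0$; coupling this with Proposition~\ref{prop:stability} gives $F(z^{k+1})\le F(z^k)-\tfrac12(F(z^k)-F(\Psi(z^k;\tau)))\le F(z^k)<c$ whenever $F(z^k)<c$. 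By induction the iterates stay in the sublevel set $\{F<c\}$, which is compact because $F$ is coercive, the hypotheses of Proposition~\ref{prop:stability} are met at every step, and $F(z^k)\downarrow L\ge\min F$.

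Next I would make the descent quantitative. Estimate \eqref{sbound} yields $\Delta_k:=F(z^k)-F(\Psi(z^k;\tau))\ge\frac{\lambda\tau+2}{2\tau}|s_k|^2$ with $s_k=\Psi(z^k;\tau)-z^k$, while the comparison $|z^{k+1}-z^k|\le C_\epsilon|s_k|$ from the same proof converts Proposition~\ref{prop:stability} into $F(z^k)-F(z^{k+1})\ge\tfrac12\Delta_k\ge c_1|z^{k+1}-z^k|^2$ for some $c_1>0$. Summing over $k$ and using $F\ge\min F$ gives $\sum_k|z^{k+1}-z^k|^2<\infty$, hence $|z^{k+1}-z^k|\to0$; arguing verbatim as in Theorem~\ref{thm:concistency} iii) the bounded sequence converges to a limit $z^*$. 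The stopping criterion \eqref{rdef} forces $|\partial_iF(z^{k+1})|\le(\tfrac1\tau+\epsilon)|(z^{k+1}-z^k)_i|\to0$, so by continuity of $\nabla F$ the limit satisfies $\nabla F(z^*)=0$.

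It remains to check that $w^*:=\Sqm{z^*}$ satisfies the Karush--Kuhn--Tucker conditions for minimizing the convex $E$ on $\realspos^M$, and here I expect the real work. For $i$ with $z^*_i\ne0$, criticality and the identity $\partial_iF(z)=2z_i\,\partial_iE(\Sqm{z})$ from \eqref{nablaF} give $\partial_iE(w^*)=0$. For $i$ with $z^*_i=0$ I would rerun the Step~4 argument of Theorem~\ref{thm:Gf}: the stopping criterion fixes $\sign(z^{k+1})_i=\sign(z^k)_i$, so $\{z^k_i\}$ has constant sign, and it yields $(z^{k+1}-z^k)_i=-\tau\partial_iF(z^{k+1})+\tau\rho_i$ with $|\rho_i|\le\epsilon|(z^{k+1}-z^k)_i|$. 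Provided $\tau\epsilon<1$ this perturbation cannot flip signs, so $\sign(z^{k+1}-z^k)_i=-\sign\partial_iF(z^{k+1})$. Were $\sign z^k_i=-\sign\partial_iF(z^k)$ to hold for all large $k$, then $|z^k_i|$ would eventually increase, contradicting $z^k_i\to0$; thus along a subsequence $\sign z^{k_j}_i=\sign\partial_iF(z^{k_j})$, whence $\partial_iE(\Sqm{z^{k_j}})=(2z^{k_j}_i)^{-1}\partial_iF(z^{k_j})\ge0$ and $\partial_iE(w^*)\ge0$ in the limit. These are exactly the KKT conditions, so by convexity of $E$ (see \eqref{hessianformula}) $w^*\in\argmin E$ and, by Theorem~\ref{thmequivalenve}, $z^*\in\argmin F$ with $(\support w^*,w^*)$ a D-optimal design.

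The two delicate points I would treat rather than cite are: first, extracting \emph{genuine} convergence of $\{z^k\}$, not merely $|z^{k+1}-z^k|\to0$, from square-summable increments (the same step left implicit in Theorem~\ref{thm:concistency} iii), which may require the coercive/compact structure or a {\L}ojasiewicz-type argument); and second, verifying that the inexactness of $\Psi_\epsilon$ preserves the sign relation underlying the KKT verification, which forces the extra smallness requirement $\tau\epsilon<1$ on the step/tolerance pair beyond what Proposition~\ref{prop:stability} alone demands.
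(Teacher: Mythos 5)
Your proposal is correct and follows essentially the same route as the paper's proof: descent of the inexact iterates via Proposition \ref{prop:stability} combined with the exact-step estimate, square-summability of the increments yielding a limit $z^*$ with $\nabla F(z^*)=0$, and then the sign-tracking argument (constant sign of $(z^k)_i$ enforced by \eqref{rdef}, plus the perturbed update relation) to verify the Karush--Kuhn--Tucker conditions for $E$ and conclude by convexity. The two delicate points you flag are real but are exactly the ones the paper also leaves implicit: the paper likewise passes from $\sum_k\mid z^{k+1}-z^k\mid^2<\infty$ directly to the Cauchy property, and its inequalities \eqref{approxtoshowsigns1}--\eqref{approxtoshowsigns2} tacitly impose the same kind of smallness on $\epsilon$ that your condition $\tau\epsilon<1$ makes explicit.
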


\begin{proof}
If $\nabla F([\Psi_\epsilon]^{(k)}(z^0;\tau))=0$ for certain $k$, then, using \eqref{rdef}, we can repeat the argument of the proof of Theorem \ref{thm:concistency}, to show that $[\Psi_\epsilon]^{(k)}(z^0;\tau)\equiv z^0$, which is not possible since we are assuming $\nabla F(z^0)\neq 0.$ Therefore we can assume without loss of generality that $\nabla F([\Psi_\epsilon]^{(k)}(z^0;\tau))\neq 0$ $\forall k\in \naturals.$ 

We use Proposition \ref{prop:stability} and the optimality of the exact step, i.e.,  $[\Psi]^{(k)}(z^0;\tau)-[\Psi]^{(k-1)}(z^0;\tau)=-\tau \nabla F([\Psi]^{(k)}(z^0;\tau))$, to get
\begin{align*}
&F([\Psi_\epsilon]^{(k)}(z^0;\tau))-F([\Psi_\epsilon]^{(k+1)}(z^0;\tau))\\
\geq &\frac 1 2 F([\Psi_\epsilon]^{(k)}(z^0;\tau))-F(\Psi([\Psi_\epsilon]^{(k)}(z^0;\tau));\tau))\\
\geq&\frac{1}{2\tau}\mid\Psi([\Psi_\epsilon]^{(k)}(z^0;\tau));\tau)-[\Psi_\epsilon]^{(k)}(z^0;\tau)  \mid^2.
\end{align*}
Notice that, using the notation introduced in the proof of Proposition \ref{prop:stability}, this last inequality can be written in the compact form 
\begin{equation}\label{estimateDeltas}
\Delta_\epsilon\geq \frac 1 2\Delta\geq \frac{\mid {s}\mid^2}{2\tau} .
\end{equation}
On the other hand, using \eqref{ebound}, we get 
$$\mid  s_\epsilon\mid=\mid s+e_\epsilon\mid\leq \frac{\lambda \tau+1}{(\lambda-\epsilon)\tau+1}\mid  s\mid.$$
Therefore we have
\begin{align*}
&F([\Psi_\epsilon]^{(k)}(z^0;\tau))-F([\Psi_\epsilon]^{(k+1)}(z^0;\tau))\geq \frac{[(\lambda-\epsilon)\tau+1]^2}{2\tau(\lambda\tau+1)}\mid  s_\epsilon\mid^2\\
=&\frac{[(\lambda-\epsilon)\tau+1]^2}{2\tau(\lambda\tau+1)^2}\mid[\Psi_\epsilon]^{(k+1)}(z^0;\tau)-[\Psi_\epsilon]^{(k)}(z^0;\tau)\mid^2.
\end{align*}
It is clear that, for any $k\in \naturals$ we have $F(z^0)-F([\Psi_\epsilon]^{(k)}(z^0;\tau))\leq c-\min F<+\infty$. Thus we can write
\begin{align*}
&+\infty>c-\min F\geq \lim_k\sum_{j=0}^{k-1}\left(F([\Psi_\epsilon]^{(j)}(z^0;\tau))-F([\Psi_\epsilon]^{(j+1)}(z^0;\tau))  \right)\\
\geq& \frac{[(\lambda-\epsilon)\tau+1]^2}{2\tau(\lambda\tau+1)^2}\sum_{j=0}^{+\infty}\mid[\Psi_\epsilon]^{(j+1)}(z^0;\tau)-[\Psi_\epsilon]^{(j)}(z^0;\tau)\mid^2
\end{align*}
Thus in particular $F([\Psi_\epsilon]^{(k)}(z^0;\tau))-F([\Psi_\epsilon]^{(k+1)}(z^0;\tau))\to 0$ as $k\to+\infty$ and $[\Psi_\epsilon]^{(k)}(z^0;\tau)$ is a Cauchy sequence: let us denote by $z^*$ its limit.

By an analog reasoning, starting from \eqref{estimateDeltas} we can show that 
$$\mid\Psi(\Psi_\epsilon^k(z^0;\tau);\tau)-\Psi_\epsilon^k(z^0;\tau)\mid\to 0,$$
thus $\Psi(\Psi_\epsilon^k(z^0;\tau);\tau)\to z^*.$ It follows by the definition of the map $\Psi$ that we have 
$$-\tau \nabla F(\Psi(\Psi_\epsilon^k(z^0;\tau);\tau))=\Psi(\Psi_\epsilon^k(z^0;\tau);\tau)-\Psi_\epsilon^k(z^0;\tau).$$
Therefore we have $\mid\nabla F(z^*)\mid=\lim_k\mid\nabla F(\Psi(\Psi_\epsilon^k(z^0;\tau);\tau))\mid=0,$ i.e., $z^*$ is critical for $F.$ 

We are left to show that $z^*$ is a minimizer of $F$ and $w^*$ is a global minimizer for $E.$  We reason as in the final step of the proof of Theorem \ref{thm:concistency}. The only needed modification is that, instead of equation \eqref{toshowsigns}, we need to use 
\begin{align}
(\Psi_\epsilon^{k}(z^0;\tau))_i&\leq(\Psi_\epsilon^{k-1}(z^0;\tau))_i-\frac 1{1+\epsilon}\partial_i F\left( \Psi_\epsilon^{k_j}(z^0;\tau)\right)\label{approxtoshowsigns1}\\
(\Psi_\epsilon^{k}(z^0;\tau))_i&\geq(\Psi_\epsilon^{k-1}(z^0;\tau))_i-\frac 1{1-\epsilon}\partial_i F\left( \Psi_\epsilon^{k_j}(z^0;\tau)\right).\label{approxtoshowsigns2}
\end{align}
These last estimates easily follow from the first requirement in \eqref{rdef}, i.e., 
$$\mid(\Psi_\epsilon^{k}(z^0;\tau))_i-\Psi_\epsilon^{k-1}(z^0;\tau))_i+\tau \partial_i F\left( \Psi_\epsilon^{k}(z^0;\tau)\right)\mid\leq \tau \epsilon\mid(\Psi_\epsilon^{k}(z^0;\tau))_i-\Psi_\epsilon^{k-1}(z^0;\tau))_i \mid,$$
under the assumption 
$$\sign \partial_i F\left( \Psi_\epsilon^{k}(z^0;\tau)\right)=-\sign (\Psi_\epsilon^{k}(z^0;\tau))_i,\;\;\forall k\in \naturals,$$
which is the negative of \eqref{claimsign}. Recall that this part of the proof of Theorem \ref{thm:concistency} is carried out by contradiction.  

We stress that we fully used the stopping criterium of Algorithm \ref{algo1} (i.e., \eqref{rdef}) in this last part of the proof. While the sequence $(z^k)_i=\Psi(z^0;\tau)$ in Theorem \ref{thm:concistency} is shown to have constant sign due to the convexity and symmetry properties of the function $g$, here $(\Psi_\epsilon(z^0;\tau))_i$ has constant sign because this condition is explicitly enforced in \eqref{rdef}.
\end{proof}

Aa (sharp) estimate for the rate of convergence of Algorithm \ref{algo1} depending on the {\L}ojacievicz exponent of $F$ at the limit point follows from Proposition 2.5 of \cite{MePi10}, using the stability estimate of Proposition \ref{prop:stability}, the convergence of  Algorithm \ref{algo1} proven in Theorem \ref{thm:convergence}.
\begin{proposition}[Rate of convergence Algorithm \ref{algo1}]\label{prop:rateofconv}
Let $z^0\in \reals^M$ and let $z^k:=\Psi^{(k)}_\epsilon(z^0;\tau)$ be computed by Algorithm \ref{algo1}, where $\tau$ and $\epsilon$ have been setted accordingly to the hypothesis of Theorem \ref{thm:concistency}, Theorem \ref{thm:convergence}, and Proposition \ref{prop:stability}. Let us denote by $z^*$ the limit of $z^k,$ then we have
\begin{enumerate}[i)]
\item If the {\L}ojaciewicz Inequality \eqref{eq:Loja} holds for $F$ at $z^*$ with $\vartheta<1/2$ and $L>0$,  then there exists $\hat k\in \naturals$ such that, for any $k>\hat k$, we have
\begin{equation}\label{polynomialspeedAlg}
\mid z^k-z^*_h\mid\leq \left((F(z^0)-F(z^*))^{-(1-2\vartheta)}+\frac{1-2\vartheta}{L^2}(k-\hat k)  \right)^{-1/(1-2\vartheta)}.
\end{equation} 
\item If the {\L}ojaciewicz Inequality \eqref{eq:Loja} holds for $F$ at $z^*$ with $\vartheta=1/2$ and $L>0$ (in particular if Hypothesis \ref{hypP3} holds), then there exists $\hat k\in \naturals$ such that, for any $k>\hat k$, we have
\begin{equation}\label{exponentialspeedAlg}
\mid z^k-z^*\mid\leq \mid z^0-z^*\mid \exp\left(-\frac{1}{L^2}(k-\hat k)  \right).
\end{equation}  
\end{enumerate}
\end{proposition}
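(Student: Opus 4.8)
The plan is to fit the sequence $z^k:=\Psi_\epsilon^{(k)}(z^0;\tau)$ into the abstract descent-sequence framework of \cite[Prop. 2.5]{MePi10}, whose hypotheses are a \emph{sufficient decrease} inequality $F(z^k)-F(z^{k+1})\geq a\mid z^{k+1}-z^k\mid^2$, a \emph{relative error} (gradient) bound $\mid\nabla F(z^{k+1})\mid\leq b\mid z^{k+1}-z^k\mid$, and the {\L}ojasiewicz inequality \eqref{eq:Loja} for $F$ at the limit point $z^*$. Once these three ingredients are in place, the two closed-form estimates \eqref{polynomialspeedAlg} (for $\vartheta<1/2$) and \eqref{exponentialspeedAlg} (for $\vartheta=1/2$) are \emph{exactly} the conclusion of that proposition, so the whole argument reduces to verifying the two structural inequalities for our specific iteration and confirming that the iterates eventually enter the neighbourhood on which \eqref{eq:Loja} holds.

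First I would recall that the sufficient decrease is already produced inside the proof of Theorem \ref{thm:convergence}, where the stability estimate of Proposition \ref{prop:stability} (namely $\Delta_\epsilon\geq\tfrac12\Delta$) is exactly what converts the inexact step into a genuine descent. That chain gives
$$F(z^k)-F(z^{k+1})\geq \frac{[(\lambda-\epsilon)\tau+1]^2}{2\tau(\lambda\tau+1)^2}\mid z^{k+1}-z^k\mid^2,$$
so one may take $a:=[(\lambda-\epsilon)\tau+1]^2/(2\tau(\lambda\tau+1)^2)>0$, a constant that depends only on the sublevel parameter $c$ through $\tau=\tau^*(c)$ and $\epsilon^*(c)$.

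Next I would extract the gradient bound directly from the stopping criterion \eqref{rdef}. Writing $\nabla g(z;z^k,\tau)=\nabla F(z)+(z-z^k)/\tau$ and evaluating at the accepted iterate $z^{k+1}=\Psi_\epsilon(z^k;\tau)$, the first requirement in \eqref{rdef} gives componentwise
$$\mid\partial_i F(z^{k+1})\mid\leq \mid\partial_i g(z^{k+1};z^k,\tau)\mid+\frac{\mid(z^{k+1}-z^k)_i\mid}{\tau}\leq\left(\epsilon+\frac1\tau\right)\mid(z^{k+1}-z^k)_i\mid,$$
and squaring and summing over $i$ yields $\mid\nabla F(z^{k+1})\mid\leq(\epsilon+1/\tau)\mid z^{k+1}-z^k\mid$, i.e. $b:=\epsilon+1/\tau$. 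Since Theorem \ref{thm:convergence} already guarantees $z^k\to z^*$, for all $k$ beyond some $\hat k$ every iterate lies in the neighbourhood $U$ of $z^*$ on which \eqref{eq:Loja} is valid (such $\vartheta$ and $L$ exist by real analyticity of $F$, and $\vartheta=1/2$ is forced under Hypothesis \ref{hypP3}, as recorded in Proposition \ref{prop:errorbound3}); thus \cite[Prop. 2.5]{MePi10} applies from index $\hat k$ onward and delivers the two rates.

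The hard part will be bookkeeping rather than any deep idea. One must invoke \emph{both} clauses of the stopping criterion \eqref{rdef}: the residual clause gives the relative-error bound above, while the sign-matching clause is what certifies that $z^{k+1}$ is the correct inexact-Newton output to which that bound applies, and it is also what keeps the sequence on the side of the saddle behaviour excluded in the last step of Theorem \ref{thm:convergence}. One should also check that $a$, $b$, and the {\L}ojasiewicz data are uniform over the sublevel $\{F<c\}$ so that a single $\hat k$ suffices. The only genuine dependence on \cite{MePi10} is that its Proposition 2.5 be stated for \emph{inexact} descent sequences carrying a relative-error term (not merely exact gradient steps); granting that, the passage to \eqref{polynomialspeedAlg} and \eqref{exponentialspeedAlg} is a direct citation, the exponent $1-2\vartheta$ and the decay constant $1/L^2$ emerging from the standard recursion $\Delta_k-\Delta_{k+1}\gtrsim \Delta_{k+1}^{2(1-\vartheta)}$ on $\Delta_k:=F(z^k)-F(z^*)$.
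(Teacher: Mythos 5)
Your argument is correct in substance but takes a genuinely different route from the paper. The paper's own (sketched) proof applies \cite[Prop.~2.5]{MePi10} to the \emph{exact} backward Euler trajectory $\Psi^{(k)}(z^0;\tau)$ --- which is legitimately a variational scheme, $z^{k}\in\argmin_z\, F(z)+\|z-z^{k-1}\|^2/(2\tau)$, the setting that reference is written for --- and only afterwards transfers the rate to the computed sequence $\Psi^{(k)}_\epsilon(z^0;\tau)$ via the stability estimate $\Delta_\epsilon\geq\Delta/2$ of Proposition~\ref{prop:stability}. You instead verify the two structural hypotheses of the abstract Kurdyka--{\L}ojasiewicz descent framework directly for the inexact iterates: the sufficient-decrease constant you quote is exactly the one derived in the proof of Theorem~\ref{thm:convergence}, and your relative-error bound $\mid\nabla F(z^{k+1})\mid\leq(\epsilon+1/\tau)\mid z^{k+1}-z^k\mid$ follows correctly from the residual clause of \eqref{rdef} together with $\nabla g(z;z^k,\tau)=\nabla F(z)+(z-z^k)/\tau$. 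Your route is arguably cleaner, since the paper's transfer step is delicate: Proposition~\ref{prop:stability} compares an exact and an inexact step issued \emph{from the same point}, while the two trajectories separate after the first iteration, so ``then we can obtain \eqref{polynomialspeedAlg} using the stability estimate'' conceals real work that your direct verification sidesteps. The one caveat --- which you flag yourself --- is that \cite[Prop.~2.5]{MePi10} as stated concerns exact variational steps, so invoking it for an inexact relative-error sequence is not a direct citation; you would need either an inexact-descent KL theorem in its place or to rerun the standard recursion $\Delta_k-\Delta_{k+1}\gtrsim\Delta_{k+1}^{2(1-\vartheta)}$ by hand, which your two verified inequalities make routine. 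With that substitution the constants in \eqref{polynomialspeedAlg} and \eqref{exponentialspeedAlg} would in general also involve $a$ and $b$ rather than $L$ alone, a point the paper's sketch glosses over as well.
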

\begin{proof}[Sketch of the proof] Estimate \eqref{polynomialspeedAlg} with $z^k$ replaced by $\Psi^{(k)}(z^0;\tau)$ holds due to \cite[Prop. 2.5]{MePi10}. Notice that, as it is shown in the proof of Theorem \ref{thm:concistency}, for the values of $\tau$ we are considering, the backward Euler scheme definig $\Psi^{(k)}(z^0;\tau)$ is indeed a variational scheme, i.e., $z^k\in \argmin F(z)+\|z-z^{k-1}\|^2/(2\tau)$. This property is fundamental for applying the results of \cite{MePi10}. Then we can obtain \eqref{polynomialspeedAlg} using the stability estimate of Proposition \ref{prop:stability}. The special case of equation \ref{exponentialspeedAlg} is obtained when the {\L}ojaciewicz exponent $\vartheta$ of $F$ at $z^*$ is $1/2$. This holds in particular when $\Hess F(z^*)$ is non-degenerate.

As it is pointed out in the proof of Proposition \ref{prop:errorbound3}, the Hessian of $F$ at $z^*$ is positive definite. Hence in such a case we have $\vartheta=1/2$. 
\end{proof}

\begin{remark}\label{rem:rateofconveta}
We stress that a result similar to Proposition \ref{prop:rateofconv} can be proven for the minimization of $F_\eta$ instead of $F$, i.e., when Problem \ref{P3} is ill-posed and we consider its regularized version Problem \ref{P4}. In such a case the exponent $\vartheta$ of the {\L}ojaciewicz Inequality is $1/2$ if equation \ref{H4} holds. 
\end{remark}

\subsection{A modified algorithm and its implementation}
Let us recall that the convergence of Algorithm \ref{algo1} proven in Theorem \ref{thm:convergence} depends on the right choice of the parameter $0<\tau<\tau^*$, where the unknown parameter $\tau^*$ depends only on the upper bound $c$ on $F$ computed at the initial guess $z^0$. A carefull examination of the proof of Theorem \ref{thm:concistency} shows that we can pick larger $\tau^*$ as we move our initial guess $z^0$ along the trajectory of the flow emanating from $z^0.$  Also note that a trade off is needed here: larger values of $\tau$ lead to faster convergence of the exact discrete trajectories $\Psi^{k}(z^0)$, but may destroy the convergence of the Newton's Method that we use for approximating $\Psi^{k}(z^0)$ by $\Psi_\epsilon^{k}(z^0).$

A possible way to overcome such difficulty is to apply the following heuristics. Let us pick an initial guess for $\tau^0$ and a maximum number $r_{max}$ of Newton iterations for each time step. If our guess for $\tau^0$ is good, then Newton's Method is conveging quadratically to $z^1$, thus $z^{1,r}$ should meet the stopping criterion of Newton iteration of Algorithm \ref{algo1} for small values of $r<r_{max}$. In such a case we may try to use a larger $\tau^1$ for computing $z^2,$ e.g., $\tau^1=\alpha \tau^0$ with $\alpha>1$. Conversely, if  $z^{1,r_{max}}$ does not meet the stopping criterion, then we reduce the time step by a multiplicative factor $\beta<1$ and \emph{restart} Newton's Method with the previous initial guess. Clearly we need to introduce a maximum number of restarts as well, in order to prevent an infinite loop. 

Iterating the above procedure at each time step we obtain Algorithm \ref{algo2} below.
\footnotesize
\begin{algorithm}[h]
\caption{Compute D-optimal design with adaptive time step choice}\label{algo2}
\begin{algorithmic}
\State{Input $z^0\in \reals^M$: $\nabla F(\Sqm{z^0})\neq 0$, $\tau>0$, $n_{step}\in \naturals$, $toll>0$, $\epsilon>0$, $r_{max}$, $\alpha>1$, $\beta\in(0,1)$, $maxn_{restart}>0$}
\State{Set $k:=0$}
\State{Compute $res=\mid\nabla F(z^0)\mid$}
\If{$res=0$}
\State{\textbf{Exit} with error.}
\EndIf
\State{$z^{old}=z^0$}
\While{$k<n_{step} \vee res>toll$}
\State{Set $k:=k+1$, $z^{new}:=z^{old}$, $n_{restart}:=0$ go:=1}
\State{Compute $res_{Newton}:=\nabla g(z^{new};z^{old},\tau)$  }
\While{go=1}
\State{Set $r:=0$}
\While{$\left(\exists i :\frac{\mid(res_{Newton})_i\mid}{\mid(z^{new}-z^{old})_i\mid}\geq \epsilon  \wedge \sign z^{new}\neq \sign z^{old}\right)\vee r<r_{max}$}
\State{$r=r+1$}
\State{Compute $z^{new}=z^{new}-[\Hess g(z^{new};z^{old},\tau)]^{-1}\nabla g(z^{new};z^{old},\tau) $}
\State{Compute $res_{Newton}:=\nabla g(z^{new};z^{old},\tau)$}
\EndWhile
\If{$\frac{\mid(res_{Newton})_i\mid}{\mid(z^{new}-z^{old})_i\mid}<\epsilon\text{ for all }i  \vee \sign z^{new}= \sign z^{old}$}
\State{go=0, $\tau=\alpha*\tau$}
\Else
\If{$n_{restart}<maxn_{restart}$}
\State{$n_{restart}=n_{restart}+1$,  $\tau=\beta*\tau$}
\Else
\State{Break}
\EndIf
\EndIf
\State{Compute $res=\mid\nabla F(z^{new})\mid$}
\EndWhile
\EndWhile\\
\Return $\Sqm{z^{new}}$
\end{algorithmic}
\end{algorithm}
\normalsize

In order to test the performances of Algorithm \ref{algo2}, we implemented it in matlab language as core rutine of the package \textsc{OptimalDesignComputation}, free downloadable at \url{https://www.math.unipd.it/~fpiazzon/Software/OptimalDesignComputation/}. 

Clearly, the only part of Algorithm \ref{algo2} (and of Algorithm \ref{algo1}) that has a non straightforward implementation is the computation of $\nabla g(z^{new};z^{old},\tau)$ and $\Hess g(z^{new};z^{old},\tau)$ that requires in particular the computation 
\begin{align*}
&\nabla F(z^{new})=\left(1-\frac{B(x_i;\Sqm{z^{new}})}N\right)_{i=1,\dots, M}\\
&\Hess F(z^{new})=\left(\frac{K^2(x_i,x_j;\Sqm{z^{new}})}N\right)_{i,j=1,\dots, M}.
\end{align*} 
Indeed this requires the computation of an orthonormal basis for the linear space generated by the columns of the matrix $V$, where $V_{i,j}=\phi_j(x_i)$, with respect to the scalar product $\langle \phi_h;\phi_k\rangle_{\Sqm{z}}:=\sum_{i=1}^M \phi_h(x_i)\phi_k(x_i)z_i^2.$ 

This task may be accomplished by various techniques that aim to cope with the potential ill-conditioning of such a problem. In the \textsc{OptimalDesignComputation} package this computation is performed by the matlab function \texttt{ONB}, which implements an orthogonalization of the matrix $\diag z V$ by two QR factorization and backslash operator. This tecnique has already been used for the computation of multivariate orthonormal polynomials with polynomial meshes (see e.g., \cite{BoSoVi10,PiVi14}, and \cite{Pi19}). It has been shown that the algorithm is particularly robust, since it can effectively work with Vandermonde matrices with very high condition number, e.g., close to the reciprocal of machine precision \cite{BoDeSoVi11}.  

\subsection{A regularized algorithm for the ill-posed case}
When Problem \ref{P3} is ill-posed (or very ill-conditioned) we can use the machinery we develop so far to solve Problem \ref{P4}, i.e., the regularized version of Problem \ref{P3} that we introduce and study in Subsection \ref{subsecIllPosed}. The estimate \eqref{} suggests that if we solve Problem \ref{P4} for a given value of $\eta:=\eta_n$, i.e., we compute $z_{\eta_n}^*$, then we may try to use this as intitial guess for solving Problem \ref{P4} for a smaller value of $\eta=\eta_{n+1}$. We iterate this procedure, stopping the iteration when $\|z_{\eta_{n+1}}^*-z_{\eta_n}^*\|$ is smaller of a prescribed tollerance.

The design computed in such a way is tipically non-sparse. while for practical applications the sparsity of optimal designs is a very useful property. To overcome such an issue we can use the Caratheodory Tchakaloff compression of a discrete measure (see, e.g., \cite{SoVi15}, \cite{PiSoVi17} and references therein) to compute a design having the same moments on the space $\Phi^2$ but (possibly) much smaller support.

These ideas is summarized in Algorithm \ref{algo3} below, where $\sigma:\realspos\rightarrow \realspos$ is any monotone increasing function. 
\begin{algorithm}[h]
\caption{Compute \emph{non-unique} D-optimal design with CaTch compression}\label{algo3}
\begin{algorithmic}
\State{Input parameters of Algorithm \ref{algo1} (or Algorithm \ref{algo2}), $\eta^0\in \realspos$, $z^0\in \reals^M$: $\nabla F_{\eta^0}(\Sqm{z^0})\neq 0$, $n_{max}^\eta\in \naturals$, $toll>0$}
\State{Set $n=0$, $res=toll+1$}
\While{$n<n^\eta_{max} \vee res>toll$}
\State{Compute $z^{n+1}$ that (approximatively) minimize $F_{\eta_n}$ by Algorithm \ref{algo1} (or Algorithm \ref{algo2}) started at $z^n$}
\State{Set $\eta^{n+1}=\sigma(\eta^{n})$, $res=\|z^{n+1}-z^{n}\|$, $n=n+1$ }
\EndWhile
\State{$V:=V(\Phi^2;X)$, $m:=V^t\Sqm{z^{n}}$}
\If{$Card \support z^n\geq \ddim_X \Phi^2$}
\State{Compute a compressed non-negative measure with weights $w$: $V^tw=m$ by Caratheodory Tchakaloff Theorem either by non-negative least squares or linear programming following e.g. \cite{PiSoVi17}}
\EndIf\\
\Return{$w$}
\end{algorithmic}
\end{algorithm}
\normalsize

\section{Experiments}\label{SecExperiment}
In this section we display the features of Algorithm \ref{algo2} and test the performances of its implementation (which is the core rutine of the aforementioned \textsc{OptimalDesignComputation} package) on few test cases of relatvely small dimension. We consider only examples where the statistical model is of polynomial type, i.e., $\Phi$ is some polynomial space. We remark that this is done only for practical reasons, there is no limitation for the choice of the basis functions in the \textsc{OptimalDesignComputation} package.

As a first example we compare the performances of Algorithm \ref{algo2} with the one of Algorithm \ref{algo1}.
\begin{experiment}[Chebyshev-Lobatto Grid]\label{exp1}
Let $X$ be a degree $40$ Chebyshev-Lobatto grid, i.e., the cartesian product of $41$ by $21$ Chebyshev-Lobatto points in $[-1,1]$, so $M=1682$. We pick $\Phi$ as the space of polynomials with total degree not exceeding $4$ (hence $N=15$). Let $z^0=1/M(1,1,\dots,1)^t)$. Consider the two parameters setting of Algorithm \ref{algo2}:
\begin{enumerate}[a)]
\item $\alpha=\beta=1, \tau^0=1$
\item $\alpha=\beta=1.15$, $\tau^0=1$, $\epsilon=10^{-4}$, and $r_{max}=5$.
\end{enumerate}
\end{experiment}
Note that in the case \emph{a)} of Experiment \ref{exp1} we set $\alpha=1$ to run Algorithm \ref{algo1} using the implementation of Algorithm \ref{algo2}. 

In the two experiments essentialy the same design is computed, e.g. the computed weights agree up to $10^{-15}.$ The common design support is reported in Figure \ref{squaredesignsupport}. Note that the cardinality of the support of the optimal design is $25$ which lies in the admissible interval for the cadinality of an optimal design $[\ddim_X \Phi,\ddim_X \Phi^2]=[15,45].$
\begin{figure}[h]
\begin{center}
\caption{The support (circles) of the optimal design of Experiment \ref{exp1}.}
\label{squaredesignsupport}
\includegraphics[scale=0.7]{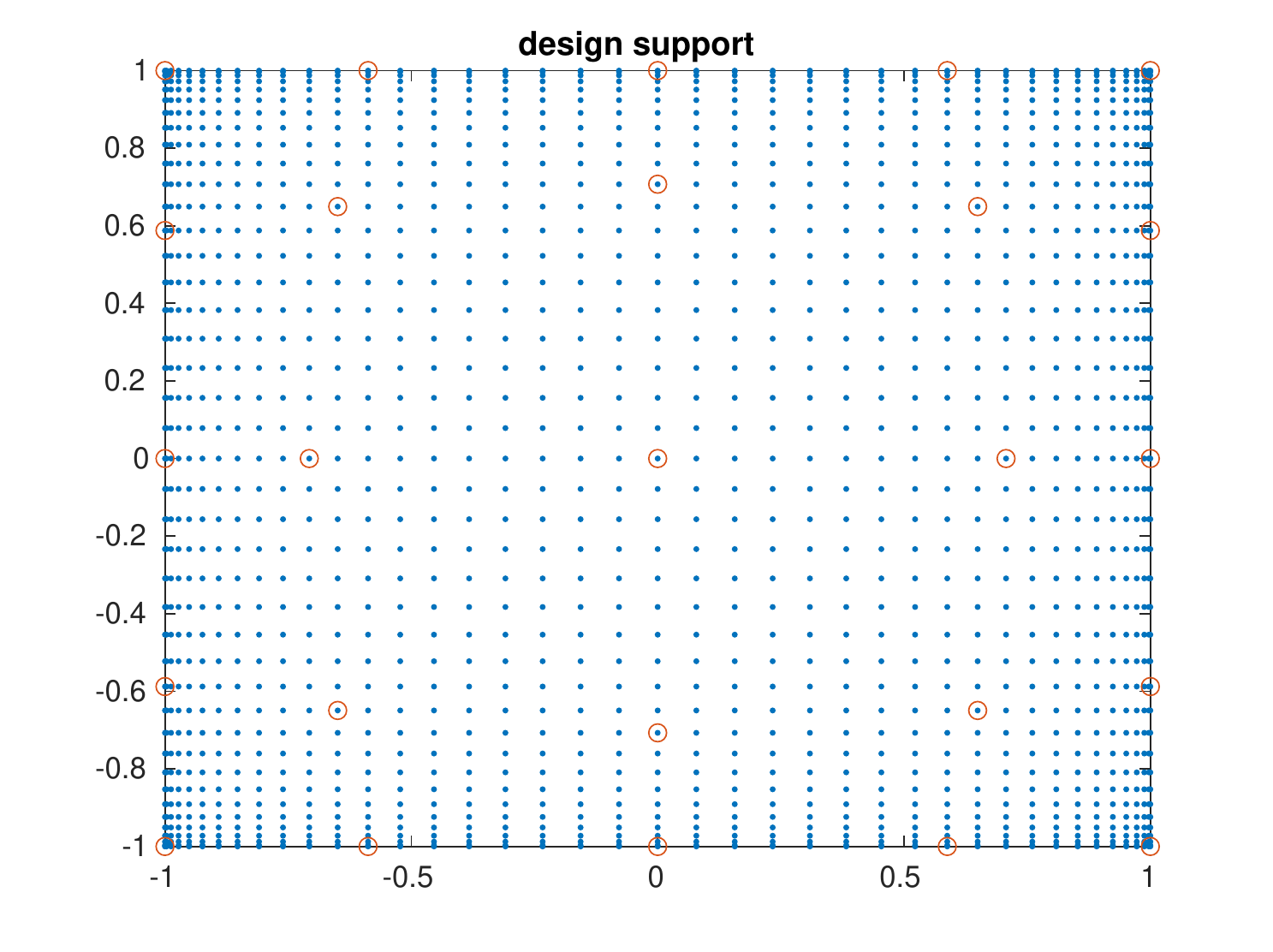}
\end{center}
\end{figure}
The computed design can be termed optimal, since it meets the Karush Kuhn Tucker optimality conditions \eqref{kktP2} up to machine precision, as we report in Figure \ref{squaredesigequilibrium}.

\begin{figure}[h]
\begin{center}
\caption{Check of the Karush Kuhn Tucker optimality conditions of the candidate optimal design of Experiment \ref{exp1} (both case \emph{a)} and \emph{b)}). Upper graph shows that $\partial_i E(w^*)$ is non-negative for any $i$ such that $w^*_i=0$, while the lower graph reports $\mid\partial_i E(w^*)\mid$ for $i$ such that $w^*_i\neq 0$.}
\label{squaredesigequilibrium}
\includegraphics[scale=0.55]{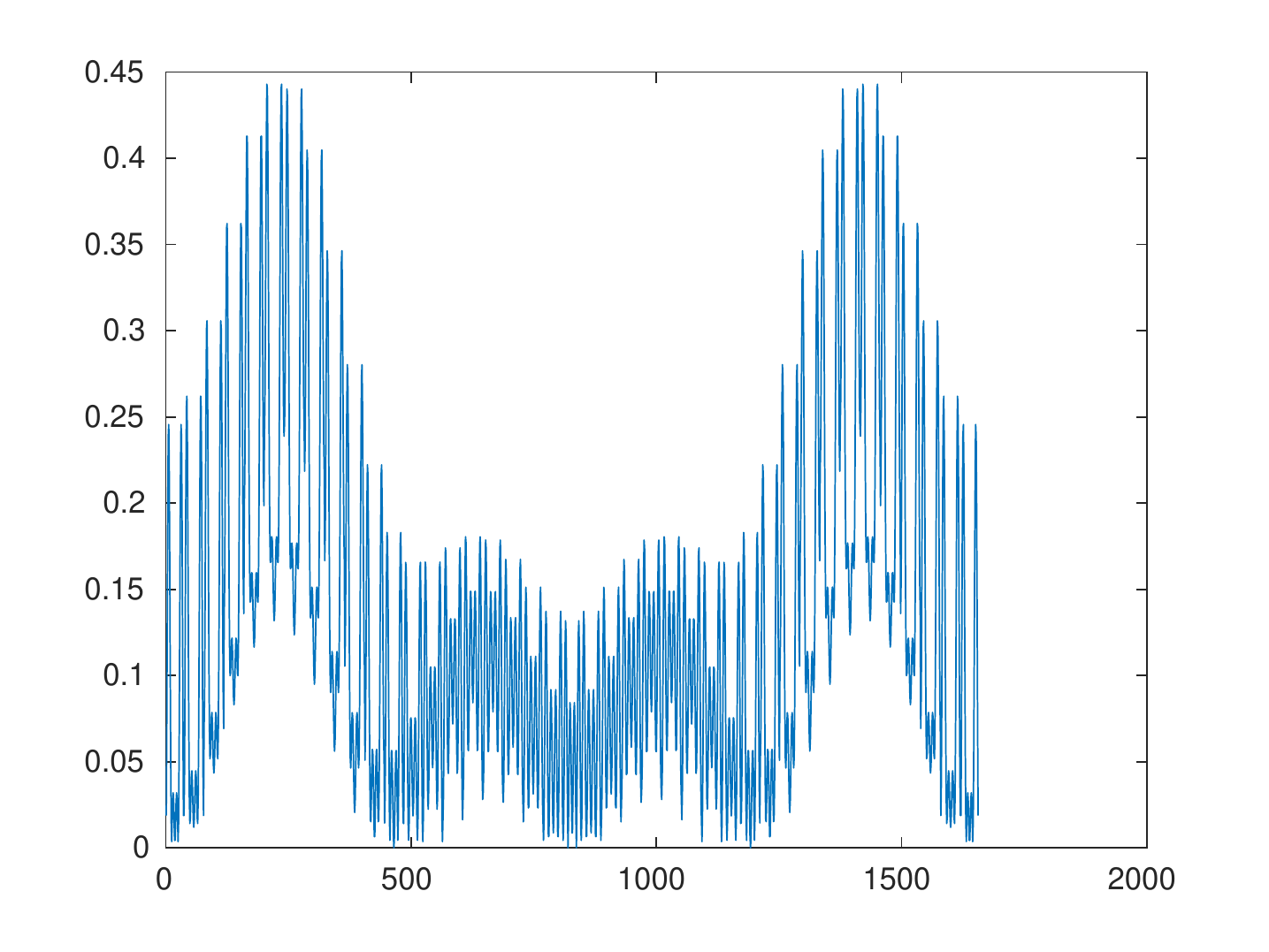}\\
\includegraphics[scale=0.55]{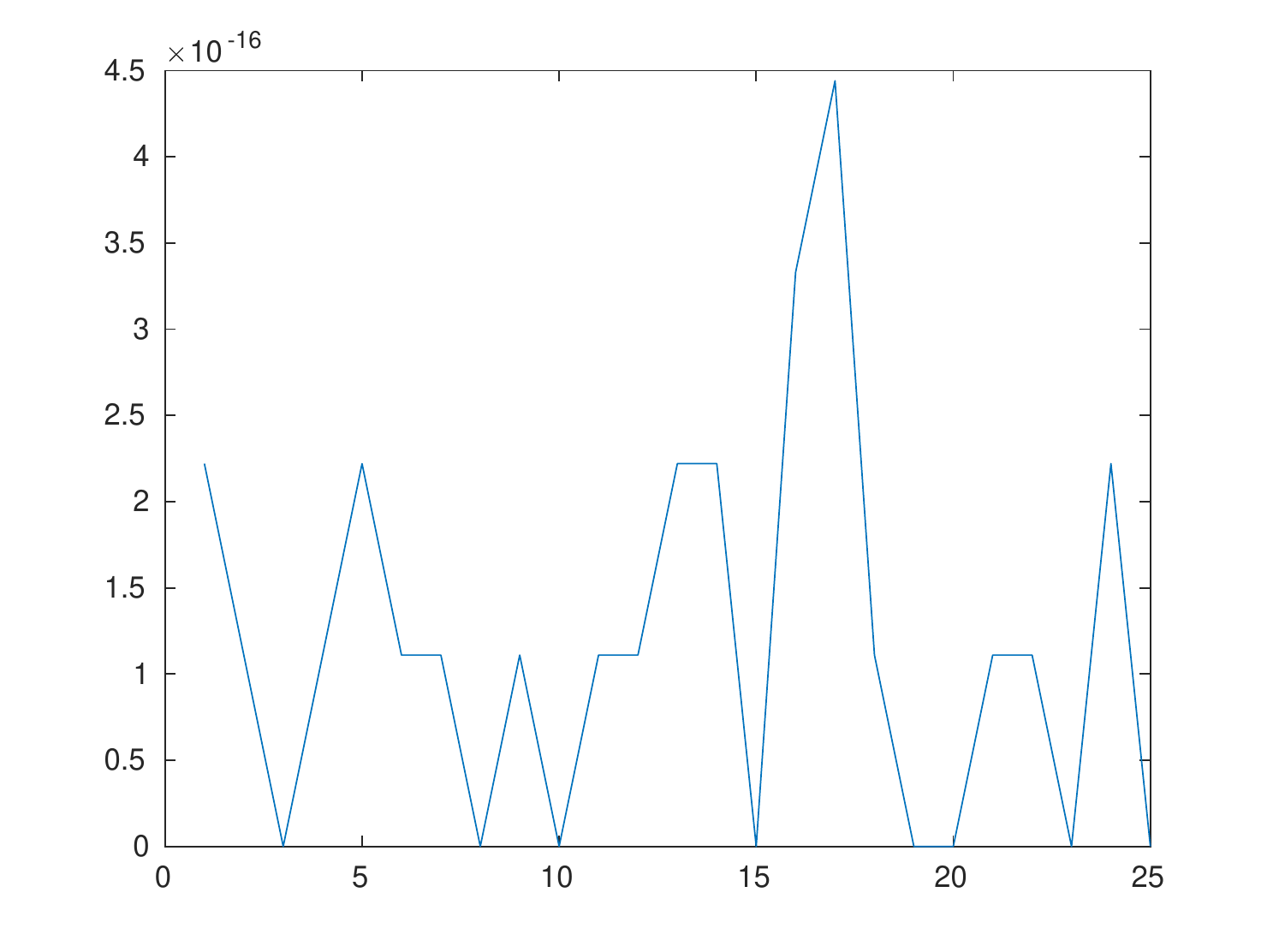}
\end{center}
\end{figure}
The two experiments have a very different experimental convergence behaviour. Indeed, in the case of constant time step $\tau\equiv 1$, the profile of convergence exhibits a linear behaviour, as it is clear from Figure \ref{square1conv}, where the steps, the residual $\|\nabla F(z^k)\|_\infty$, and the Karush Kuhn Tucker residual of the $k$-th iteration are displayed. We remark that we term \emph{Karush Kuhn Tucker residual} the max-norm  of the non-linear residual of the system \eqref{kktP2} (right hand side), i.e., the quantity $\|res^{KKT}(\Sqm{z^k})\|_\infty$, where
$$res_i^{KKT}(w)= 
\begin{cases}
\mid\partial_i E(w)\mid=\mid 1-B(x_i;w)/N\mid& \text{ if }w_i>0\\
\mid(\partial_i E(w))^-\mid=\max\{0,B(x_i;w)/N-1\}& \text{ if }w_i=0
\end{cases}\,.
$$
Instead, enabeling the adaptive time step choice in Algorithm \ref{algo2}, the profile of convergence has a superlinear behaviour, see Figure \ref{squarevarconv}.
\begin{landscape}
\begin{figure}[h]
\begin{center}
\caption{Convergence profile for the computation of the optimal design of Experiment \ref{exp1} (case \emph{a)}) with fixed time step $\tau\equiv 1.$ Steps $\|z^{k+1}-z^k\|$ are plotted on the left axis, the $\ell^\infty $ norms of the residuals and residuals of KKT conditions, i.e, $\|\nabla F(z^k)\|_\infty$ and $\|res^{KKT}(\Sqm{z^k})\|_\infty$ are reported on the right axis. }
\label{square1conv}
\includegraphics[scale=0.45]{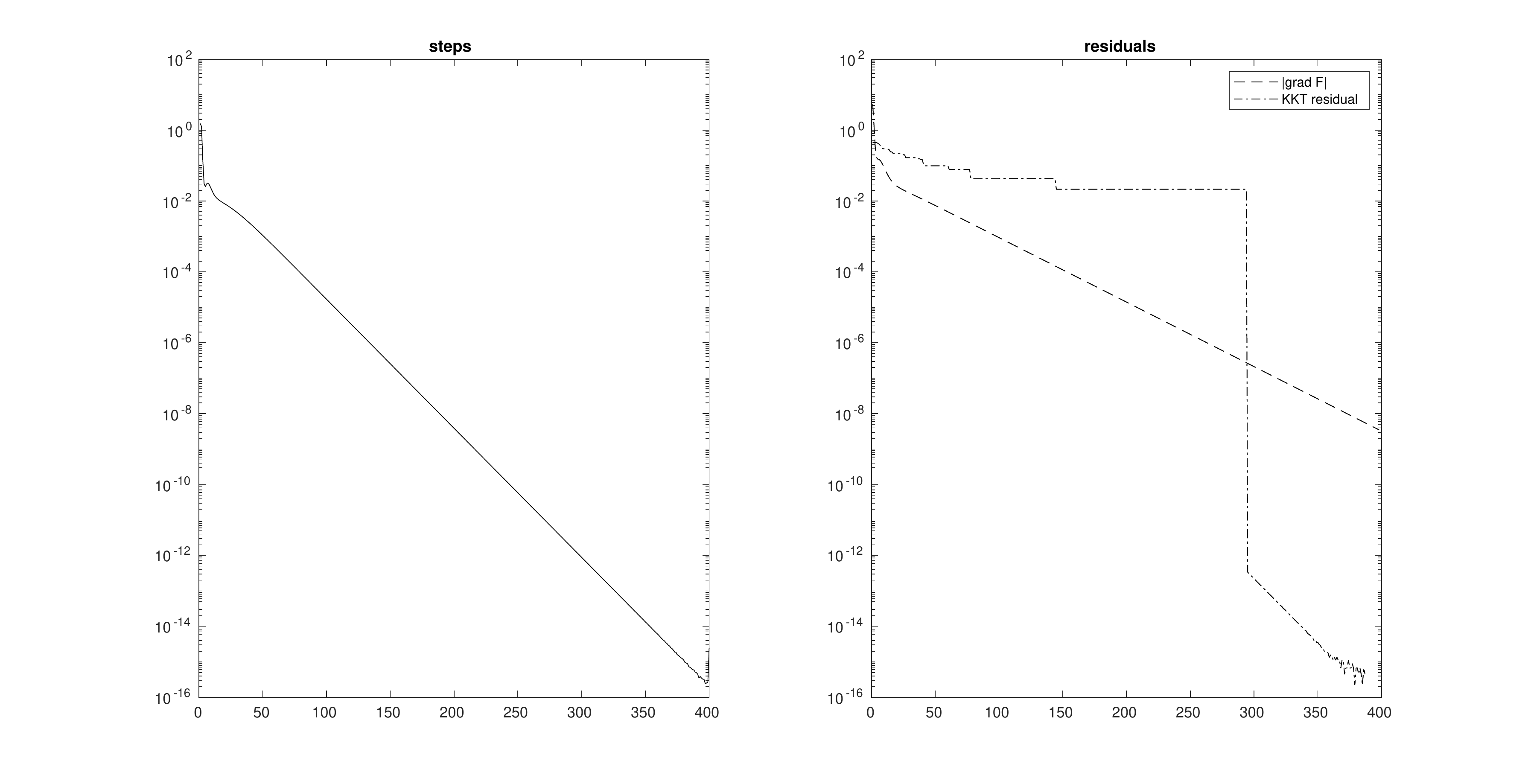}
\end{center}
\end{figure}
\end{landscape}

\begin{landscape}
\begin{figure}[h]
\begin{center}
\caption{Convergence profile for the computation of the optimal design of Experiment \ref{exp1} case \emph{b)} with variable time step $\alpha=\beta=1.15$. Steps $\|z^{k+1}-z^k\|$ are plotted on the left axis, the $\ell^\infty $ norms of the residuals and residuals of KKT conditions, i.e, $\|\nabla F(z^k)\|_\infty$ and $\|res^{KKT}(\Sqm{z^k})\|_\infty$ are reported on the right axis. }
\label{squarevarconv}
\includegraphics[scale=0.45]{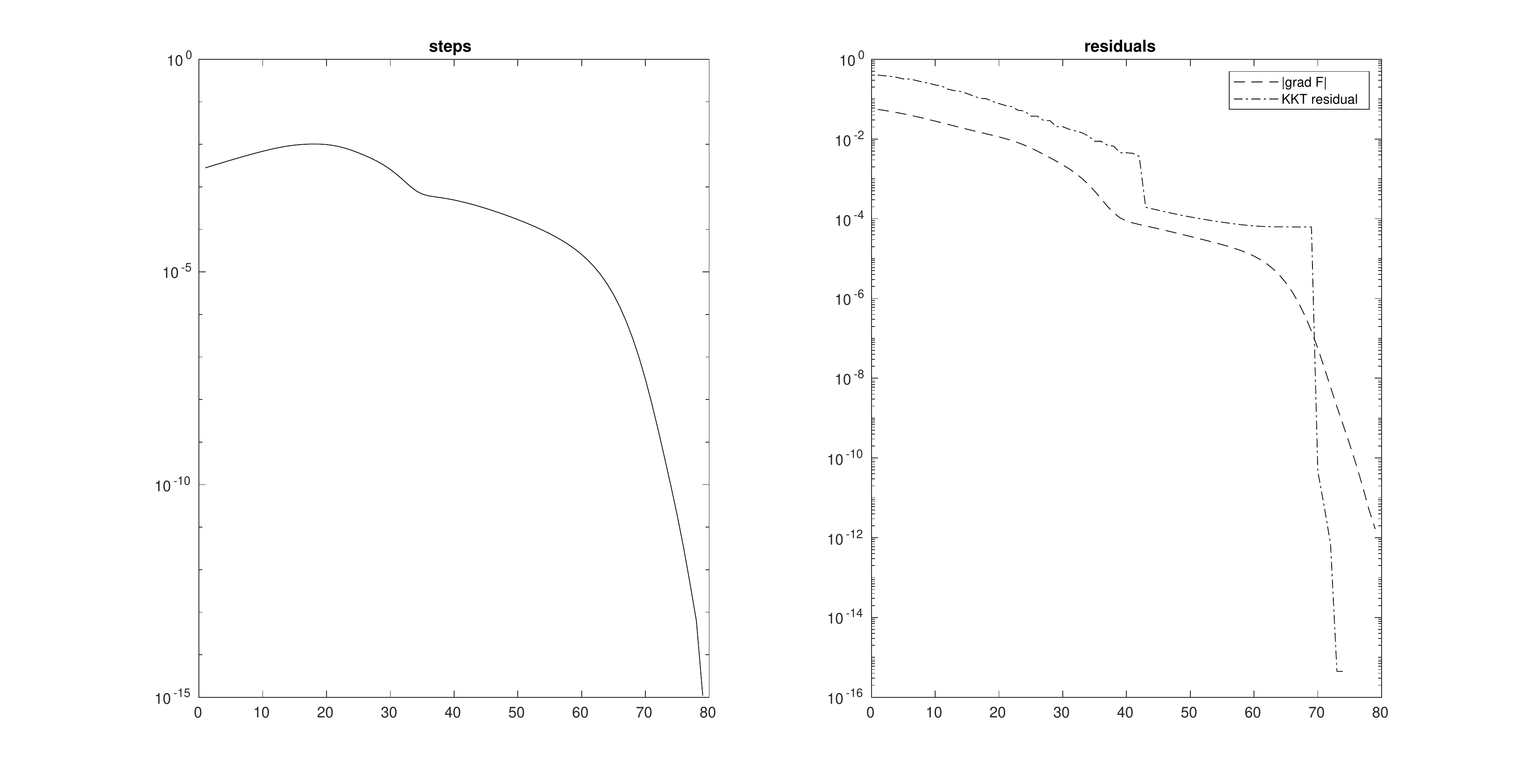}
\end{center}
\end{figure}
\end{landscape}

As second example we consider a rather usual setting in random sampling, a uniform random points cloud in the square $[-1,1]$.
\begin{experiment}[Uniform points cloud]\label{exp2}
Let $X$ be a random points cloud of $M=1600$ uniform points in $[-1,1]$. We pick $\Phi$ as the space of polynomials with total degree not exceeding $10$ (hence $N=66$). Let $z^0=1/M(1,1,\dots,1)^t)$.  Consider the parameters setting of Algorithm \ref{algo2}: $\alpha=\beta=1.15$, $\tau^0=1$, $\epsilon=10^{-4}$, and $r_{max}=5$.
\end{experiment}
Also for Experiment \ref{exp2} we compute an optimal design up to machine precision in the sense of the sense of Karush Kuhn Tucker residual is approximately $10^{-15}$. We report in Figure \ref{uniformdesigequilibrium} the components of the vector of residuals The cardinality of the support is $171$ which again lies in the admissible interval $[66,231].$
\begin{figure}[h]
\begin{center}
\caption{The support (circles) of the optimal design of Experiment \ref{exp2}.}
\label{squaredesignsupport}
\includegraphics[scale=0.7]{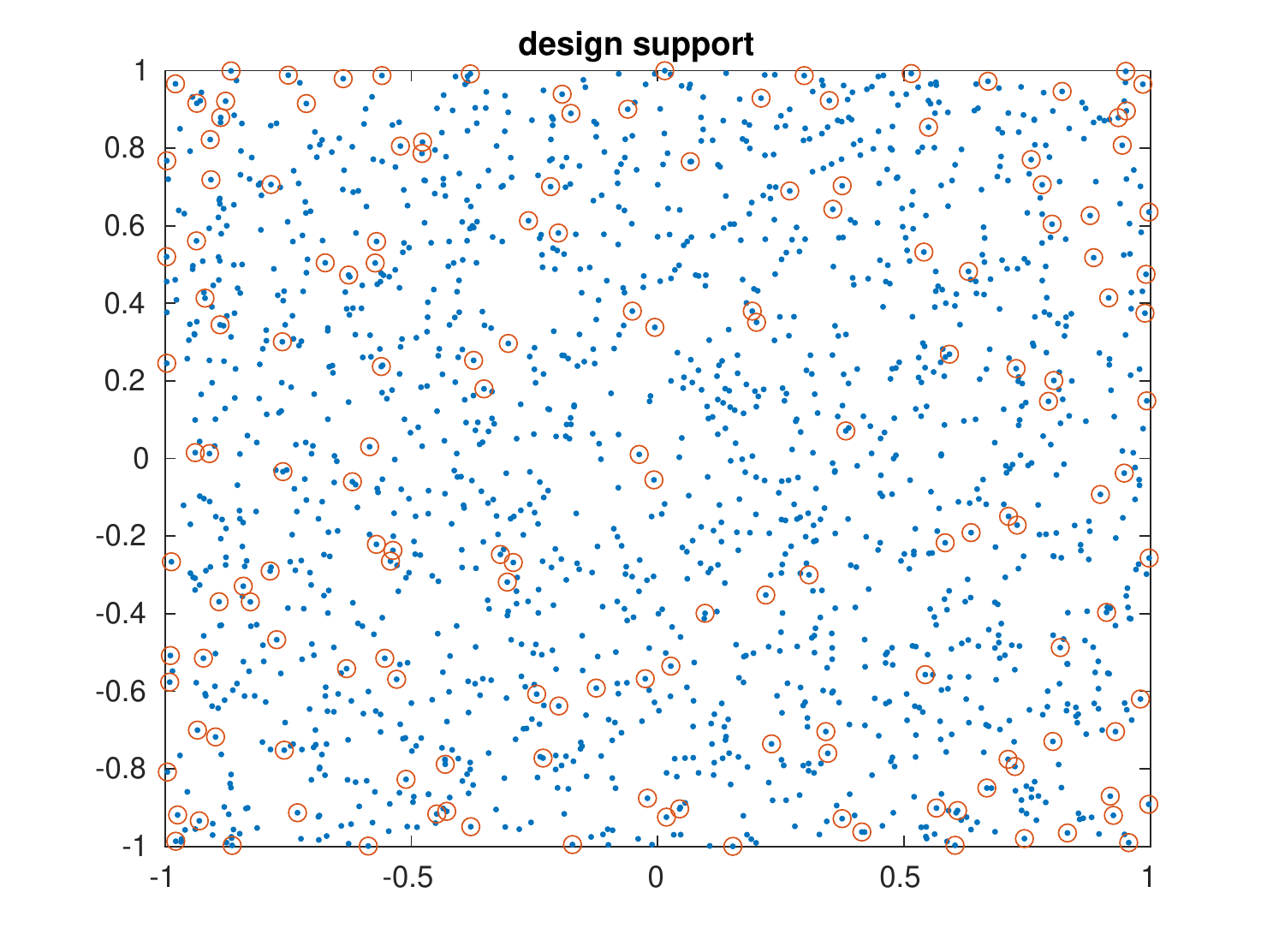}
\end{center}
\end{figure}
\begin{figure}[h]
\begin{center}
\caption{Check of the Karush Kuhn Tucker optimality conditions \eqref{kktP2} (right system) of the candidate optimal design of Experiment \ref{exp2}. Upper graph shows that $\partial_i E(w^*)$ is non-negative for any $i$ such that $w^*_i=0$, while the lower graph reports $\mid\partial_i E(w^*)\mid$ for $i$ such that $w^*_i\neq 0$.}
\label{uniformdesigequilibrium}
\includegraphics[scale=0.55]{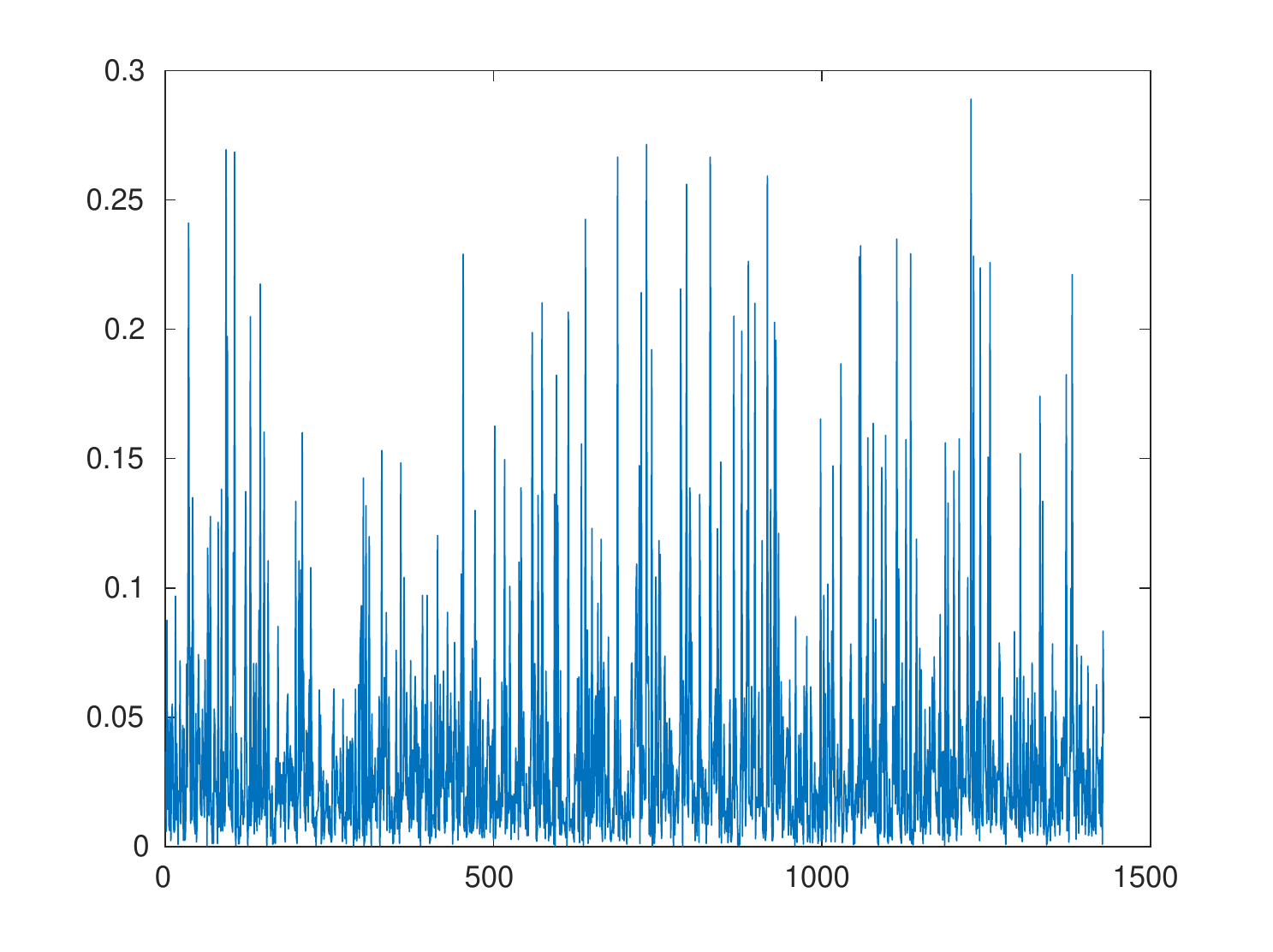}\\
\includegraphics[scale=0.55]{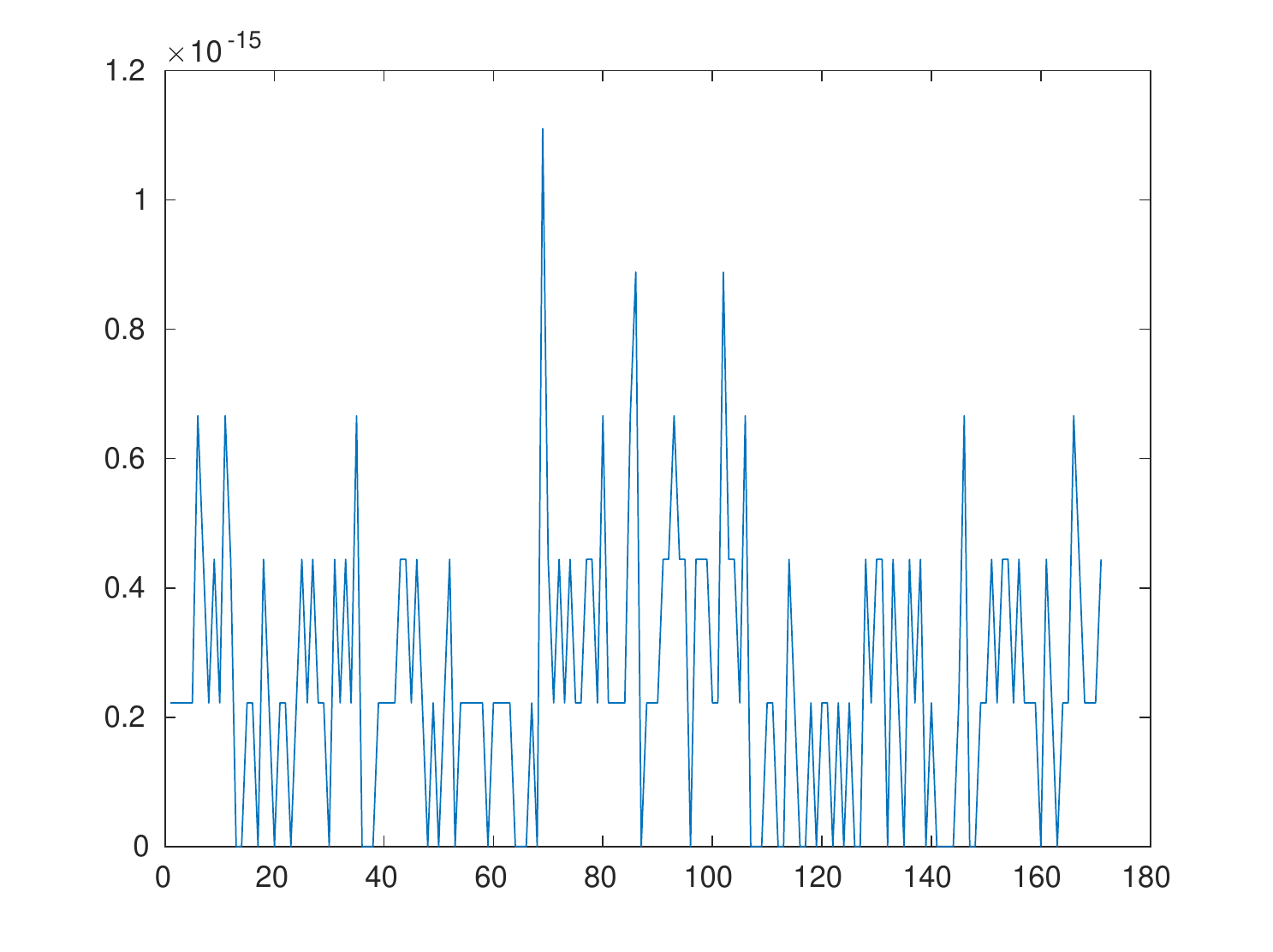}
\end{center}
\end{figure}

\begin{landscape}
\begin{figure}[h]
\begin{center}
\caption{Convergence profile for the computation of the optimal design of Experiment \ref{exp2} with variable time step $\alpha=\beta=1.15$. Steps $\|z^{k+1}-z^k\|$ are plotted on the left panel, the $\ell^\infty $ norms of the residuals and residuals of KKT conditions, i.e, $\|\nabla F(z^k)\|_\infty$ and $\|res^{KKT}(\Sqm{z^k})\|_\infty$ are reported on the right panel.}
\label{uniformconv}
\includegraphics[scale=0.45]{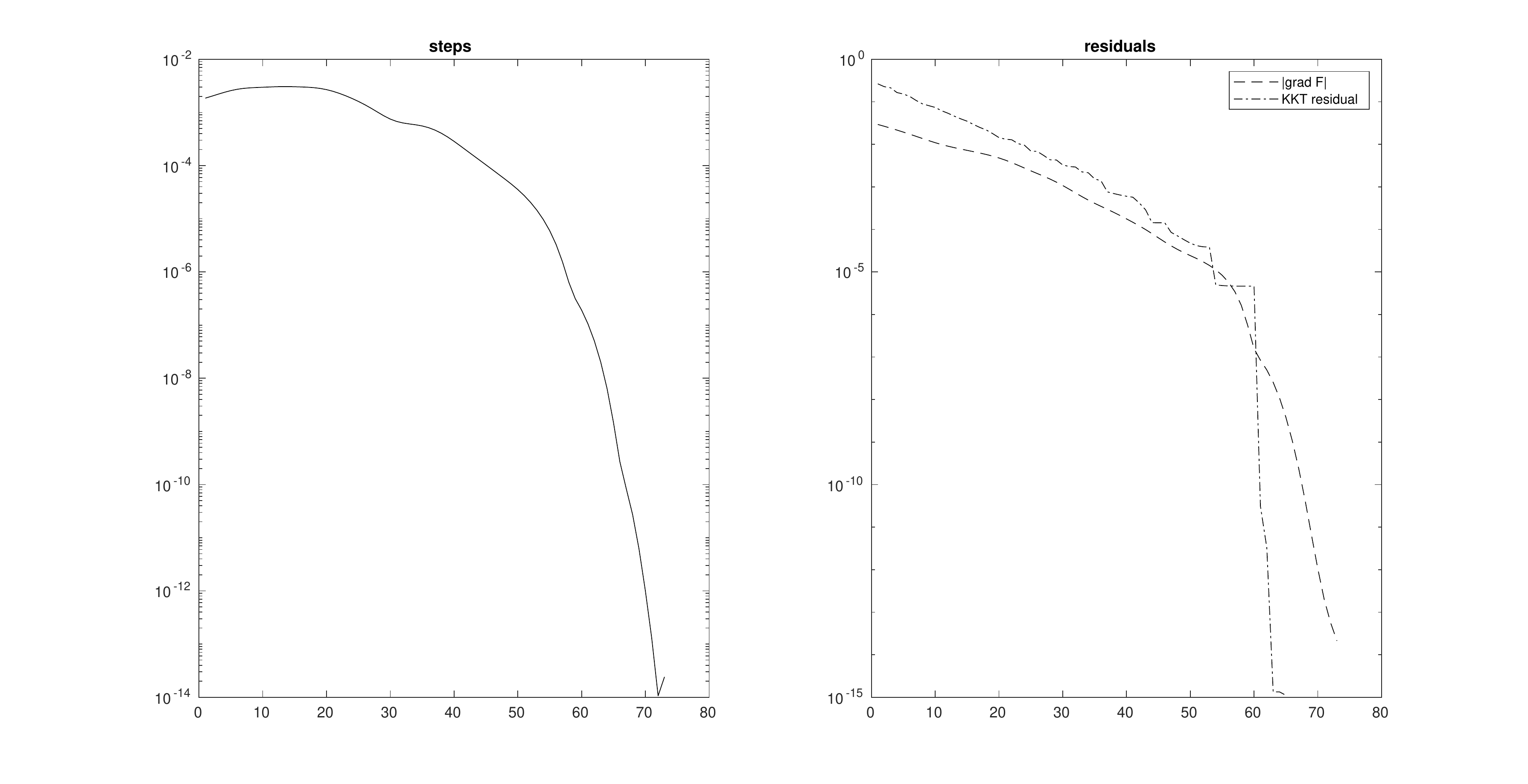}
\end{center}
\end{figure}
\end{landscape}

We report in Figure \ref{uniformconv} the convergence profile of Experiment \ref{exp2}. Note that, both in Experiment \ref{exp1} (case \emph{a)} and case \emph{b)}) and in Experiment \ref{exp2}, the non linear residuals $\| \nabla F(z^{k})\|$ (right panel of Figure \ref{square1conv}, Figure \ref{squarevarconv}, and Figure \ref{uniformconv}) have the same qualitative behaviour of the steps (left panel of Figure \ref{square1conv}, Figure \ref{squarevarconv}, and Figure \ref{uniformconv}). We can check a-posteriori that this quantities are good estimators of the error $\|z^*-z^{k}\|$. Indeed, we can compute $\nabla E$ and $\Hess E$ at our best (e.g., last) approximation $w^{output}$ of $w^*$ and check numerically that Equations \eqref{SAp31} and \eqref{SAp32} hold at $w^{output}$, i.e., the Set of Assumptions \ref{hypP3} holds true. We proved in Proposition \ref{prop:errorbound3} that under such a condition the non-linear residual is proportional to the error, see equation \eqref{eq:linearcond}. 

We stress that this numerical check is rather delicate: we need to distinguish very small values from zero, both in determining the design support and in computing the eigenvalues of the restricted Hessian matrix of $E$ at $w^{output}$ (see Equation \eqref{SAp32}). In critical cases it might be more safe, though more expensive, to directly compute the smallest eigenvalue of the Hessian of $F.$ 

In our tests of the implementation of Algorithm \ref{algo1} and Algorithm \ref{algo2} we tried to construct examples of an optimal design for a finite set for which the Set of Assumptions \ref{hypP3} fails, but still the Set of Assumptions \ref{H2} holds true. This would lead to an example of unique optimal design which is a minimizer of $F$ with degenerate Hessian matrix. Surprisingly, this task is in pactice much more difficult than it could seem at first sight. Unfortunately we are not able to provide a \emph{neat} example of such a critical case. 

Conversely, we can provide an example where even the rather weak Set of Assumptions \ref{H2} does not hold. A very large (and possibly symmetric) design space with respect to the dimension of $\Phi$ is used to contruct the following example. The design space here is a \emph{admissible polynomial mesh} for a disk. Admissible polynomial meshes are good discretizations of a compact set for constructing discrete polynomial lesat squares projection operators having small norm and allowing stable computations. See for instance \cite{BoDeSoVi11}, \cite{PiVi14}, and \cite{BoCaLeSoVi11}.

\begin{experiment}[Admissible mesh for a disk]\label{exp3}
Let $X$ be the admissible polynomial mesh of degree $20$ for the unit disk as it is constructed in \cite[Sec. 2.1]{BoCaLeSoVi11}. We pick $\Phi$ equal to the space of polynomials of two variables with total degree at most $2$. Thus $N=6$ and $M=1601$. Let $z^0=1/M(1,1,\dots,1)^t)$.  Consider the parameters setting of Algorithm \ref{algo2}: $\alpha=\beta=1.15$, $\tau^0=1$, $\epsilon=10^{-4}$, and $r_{max}=5$.
\end{experiment}

\begin{landscape}
\begin{figure}[h]
\begin{center}
\caption{Convergence profile for the computation of the optimal design of Experiment \ref{exp3} with variable time step $\alpha=\beta=1.15$. Steps $\|z^{k+1}-z^k\|$ are plotted on the left panel, the $\ell^\infty $ norms of the residuals and residuals of KKT conditions, i.e, $\|\nabla F(z^k)\|_\infty$ and $\|res^{KKT}(\Sqm{z^k})\|_\infty$ are reported on the right panel.}
\label{diskconv}
\includegraphics[scale=0.45]{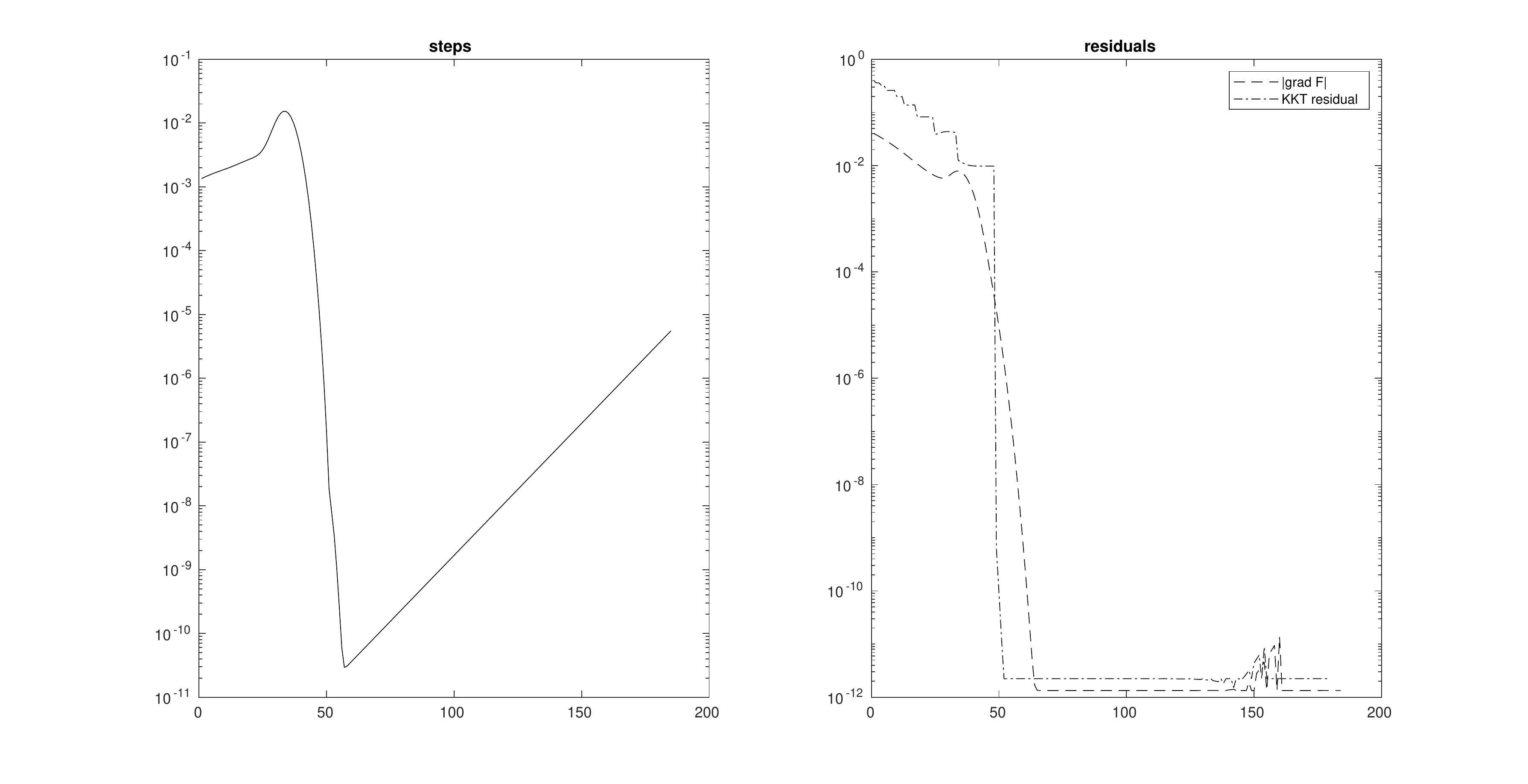}
\end{center}
\end{figure}
\end{landscape}

\begin{figure}
\caption{Spectrum of the Hessian matrix of $F$ computed at the final step of Algorithm \ref{algo2} in Example \ref{exp3}.}\label{diskspectrum}
\begin{center}
\includegraphics[scale=0.5]{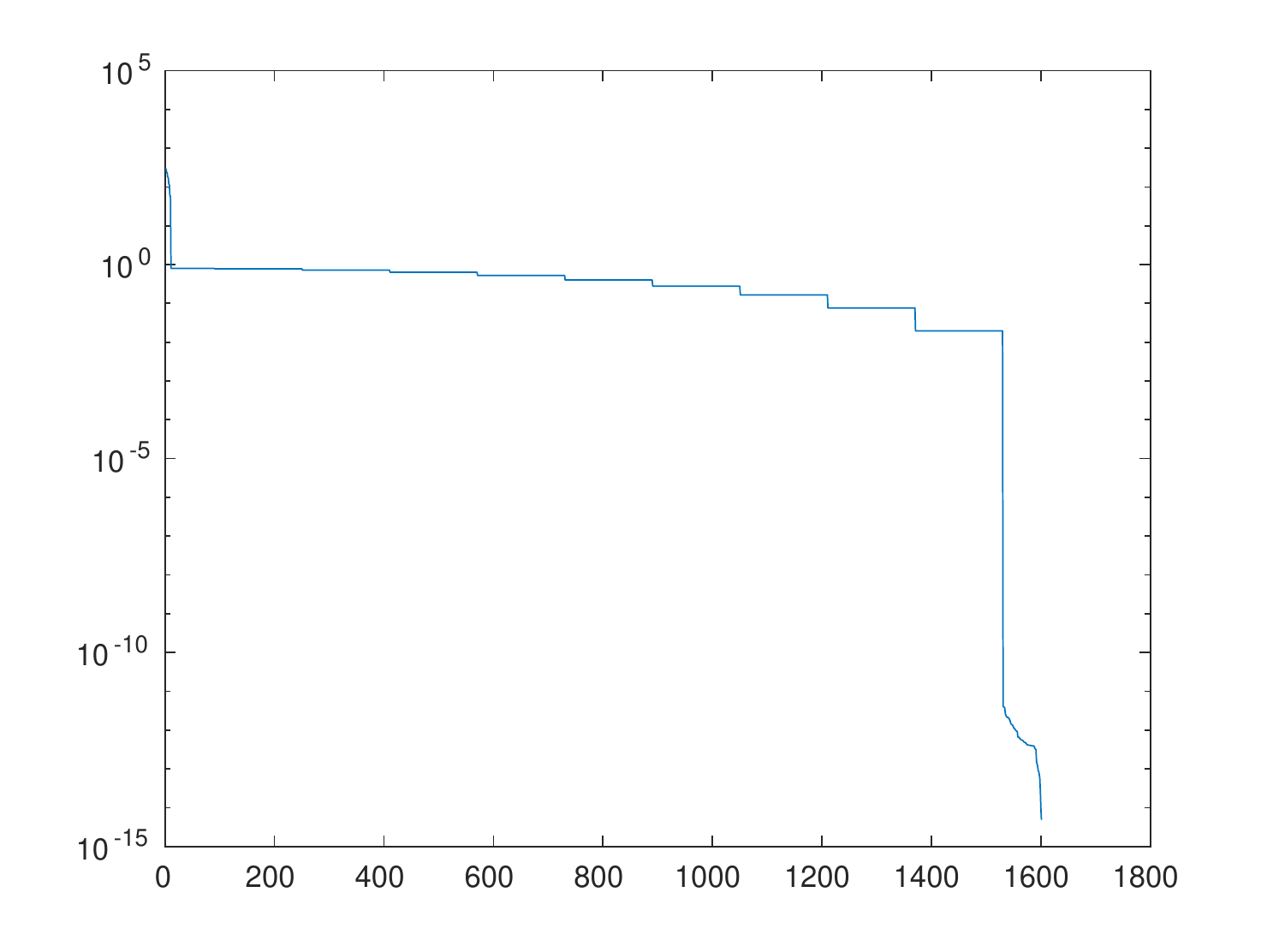}
\end{center}
\end{figure}
We report the steps and the residuals of Experiment \ref{exp3} in Figure \ref{diskconv}. It is clear that this case the qualitative behaviour of the computed sequence is different from the  previous cases. Note that once a "very good accuracy" both in terms of residual and KKT residual is reached, then the iterates of Algorithm \ref{algo2} move along a path with almost constant residual $\|\nabla F(z^{k})\|$ and Karush Kuhn Tucker residual. Since the Newton's method is reaching the stopping criterion very easily (e.g., 1-3 iterations) at each time step, the variable $\tau$ Algorithm \ref{algo2} is increased. This results in a increasing step. The explanation of this phenomena possibly comes from the analysis of the spectrum of the Hessian matrix of $F$ (see Figure \ref{diskspectrum}) which has some very small (i.e., close to machine precision) eigenvalues. We can say that from the numerical point of view the Set of Assumptions \ref{hypP3} does not hold. On the other hand, we may ask wether the Set of Assumptions \ref{H2} is satisfied. To this aim let us denote by $w^{\text{output}}$ our smallest residual approximation of an optimal design and consider the following problem:
\begin{equation}\label{modifiedunicity}
\begin{cases}
\bar w\in \argmin \log \frac 1{1+\|w-w^{\text{output}}\|^2},\\
\bar w_i\geq 0& \text{ for all }i\\
V^t\bar w=V^tw^{\text{output}}
\end{cases}\;,
\end{equation}
where $V:=V(\Phi^2,X).$  This corresponds to find the point $\bar w$ of the solution set $\mathcal S$ (see Subsection \eqref{subsecIllPosed}) that has the largest distance from $w^{\text{output}}.$ Clearly, if we find a feasible solution to \eqref{modifiedunicity} which is not equal to $w^{\text{output}}$, then the Set of Assumptions \ref{H2} does not hold. Note that we consider the problem \eqref{modifiedunicity} as a convenient way of checking such an hypothesis. This approach has some advantages since it is a classical optimization problem for a smooth and strictly convex function on a politope (or an empty set). In our specific case, we solved \eqref{modifiedunicity} numerically by using the matlab function \texttt{fmincon}, which computes an optimal feasible solution differing from $w^{\text{output}}$ by approximately $7\cdot 10^{-2}.$ Thus the Set of Assumptions \ref{H2} does not hold. Thus the problem we are considering is not well-posed.

The reasons explained above suggest to test Algorithm \ref{algo3} on the same example as in Experiment \ref{exp3}.
\begin{experiment}\label{exp4}
Let $X,\Phi$ be as in Experiment \ref{exp3}. Let $\alpha=\beta=1.5$ and let the other parameters of Algorithm \ref{algo2} be setted as in Experiment \ref{exp3}. Let $z^0=1/M(1,1,\dots,1)^t)$.  Consider Algorithm \ref{algo3} (calling Algorithm \ref{algo2}) with $\eta^0=10^{-2}$ $\sigma(\eta)=\eta^2.$
\end{experiment}
Note that we decided to run Algorithm \ref{algo3} in Experiment  \ref{exp4} with a larger value of $\alpha$ with respect to the case of Experiment \ref{exp3}. This is heuristically justified by the fact that we know that the objective considered in Experiment \ref{exp4}, i.e., $E_\eta$, is strongly convex on $\realspos^M.$

In this example the \texttt{while} loop used in Algorithm \ref{algo3} for diminuishing $\eta$ is stopped after the first iteration since the computed minimizers $z^*_{\eta^0},z^*_{\eta^1}$ are very close. The behaviour of the computed sequences $\{z^k_{\eta^0}\},\{z^k_{\eta^1}\}$ is also very similar, but much different from the one of the sequence $\{z^k\}$ computed by Algorithm \ref{algo2} in Experiment \ref{exp3}. We report the convergence profile of $\{z^k_{\eta^1}\}$ in the left panel of Figure \ref{diskregconv} and the residuals in the right panel of the same figure (we invite the reader to compare this figure to Figure \ref{diskconv}). The experimental convergence is clearly super-linear. This is a consequence of the combination of the choice calling Algorithm \ref{algo2} instead of Algorithm \ref{algo1} in the \texttt{while} loop of Algorithm \ref{algo3}, and the fact that \eqref{H4} holds true (as we can easily checked numerically), see Remark \ref{rem:rateofconveta}. 

We remark also that the computed design is indeed an optimal design instead of just a minimizer of $E_\eta$. This happens (for all $\eta$ sufficiently small) precisely when property \eqref{orthmagic} is satisfied. 

An interesting feature of this example is the cardinalities of the support of the computed designs, see Figure \ref{disksupport}. Indeed if we force Algorithm \ref{algo3} to skip the compression final step, we compute a design supported at $81$ points, while enabeling the compression of the design by Caratheodory Tchakaloff Theorem as implemented in \cite{Vi16,PiSoVi17} the cardinality of the support drops considerably to $10.$ This is a remarkable fact, since in the compression procedure the fitting of $15$ moments (i.e., the dimension of $\Phi^2$) is imposed. In general (and in the large majority of the test we made on the implementation of the Caratheodory Tchkaloff compression, see also \cite{PiSoVi17b}) this results in a compressed quadrature formula with a support size of the same magnitude as the number of imposed moments, with the exeption of few instances where the cardinality drops by 1. Here the drop is much larger (in a relative sense). This phenomena is probably related to the fact that we are compressing a quadrature rule for an \emph{optimal} design, which is intrinsically a sparse measure.

If we repeat Experiment \ref{exp3} considering $\Phi$ the space of polynomials of degree at most $4$ instead of $2$ and with an admissible polynomial mesh of degree $40$ ($M=6401$) instead of $20$ we get a similar convergence profile and residuals. On the other hand the result of the compression of the design is even more relevant. In this case the optimal design computed by Algorithm \ref{algo3} before the compression step is $321$ and it drops dramatically to $15$ after compression. Note that this support cardinality meets precisely the lower bound for the cardinality of an optimal design for the considered space, being $\ddim_X\Phi=15,$ see Figure \ref{disksupportb}.  

\begin{figure}[h]
\begin{center}
\caption{The support ($81$ large dots) of the optimal design of Experiment \ref{exp4} before the compression by Caratheodory Tchakaloff Theorem, and after compression ($10$ stars).}
\label{disksupport}
\includegraphics[scale=0.5]{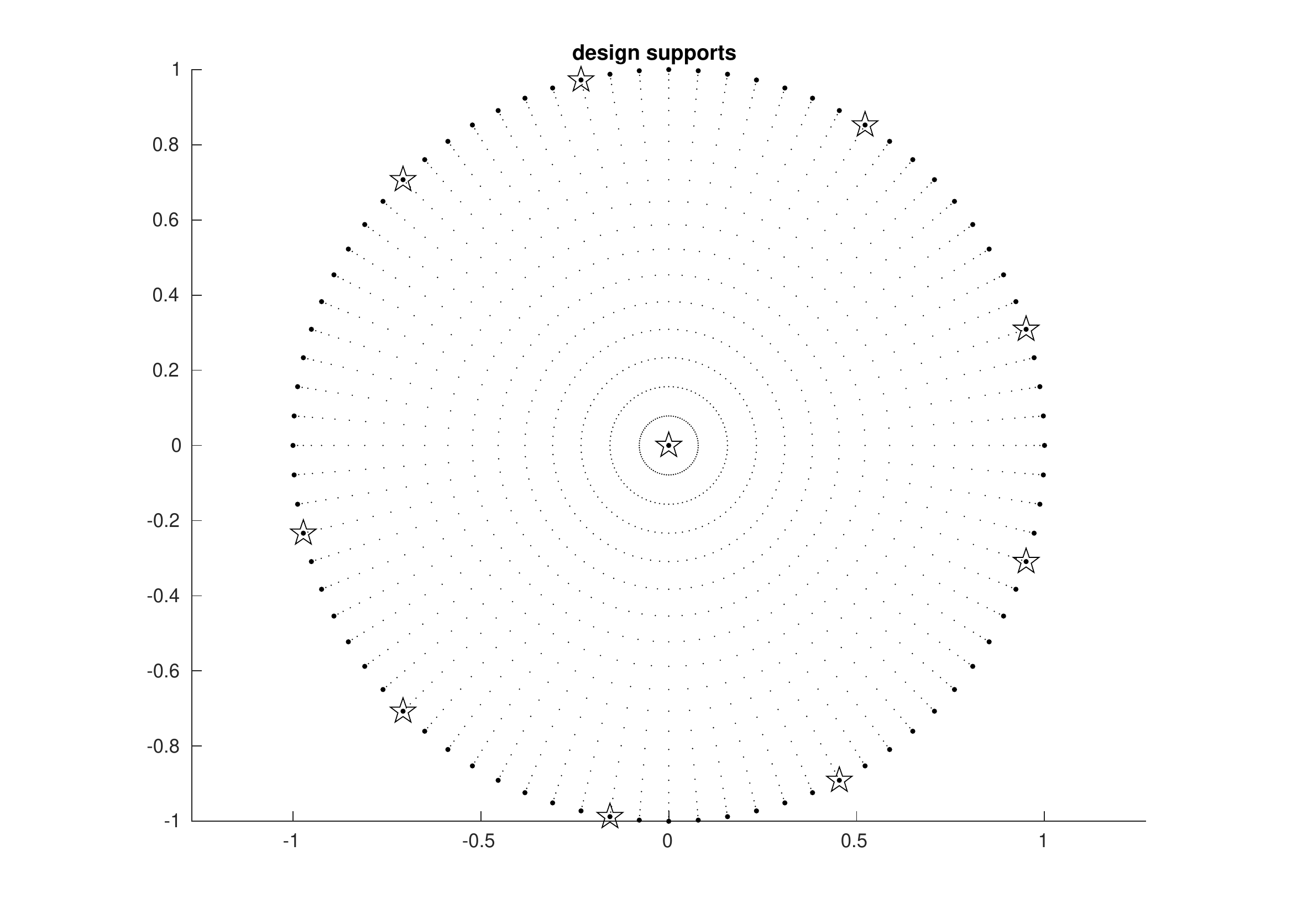}
\end{center}
\end{figure}

\begin{landscape}
\begin{figure}[h]
\begin{center}
\caption{Convergence profile for the computation of the optimal design of Experiment \ref{exp4} with variable time step $\alpha=\beta=1.5$. Steps $\|z^{k+1}-z^k\|$ are plotted on the left panel, the $\ell^\infty $ norms of the residuals and residuals of KKT conditions, i.e, $\|\nabla F(z^k)\|_\infty$ and $\|res^{KKT}(\Sqm{z^k})\|_\infty$ are reported on the right panel.}
\label{diskregconv}
\includegraphics[scale=0.45]{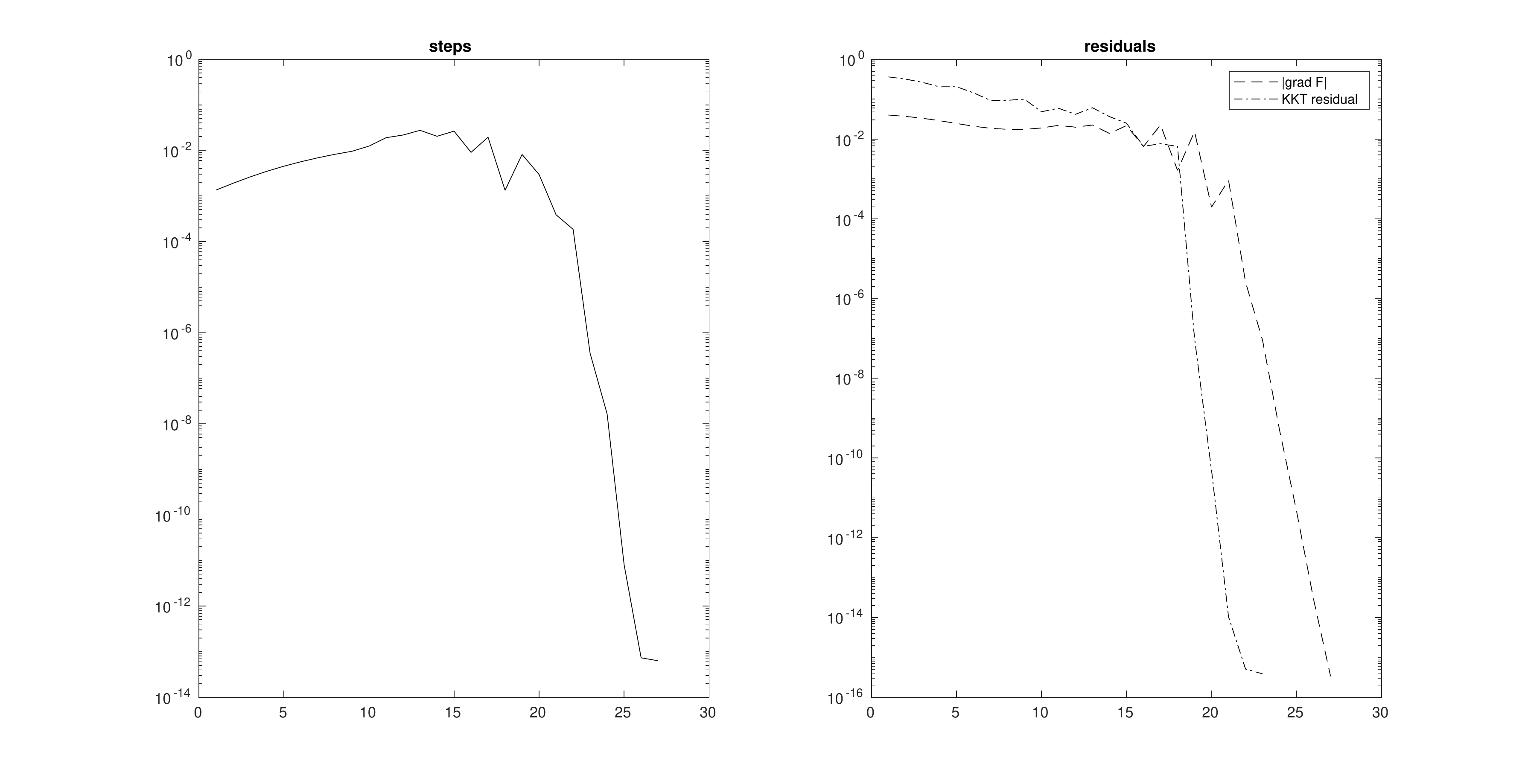}
\end{center}
\end{figure}
\end{landscape}

\begin{figure}[h]
\begin{center}
\caption{The support ($321$ large dots) of the optimal design of Experiment \ref{exp4} (restart with doubled degrees) before the compression by Caratheodory Tchakaloff Theorem, and after compression ($15$ stars).}
\label{disksupportb}
\includegraphics[scale=0.5]{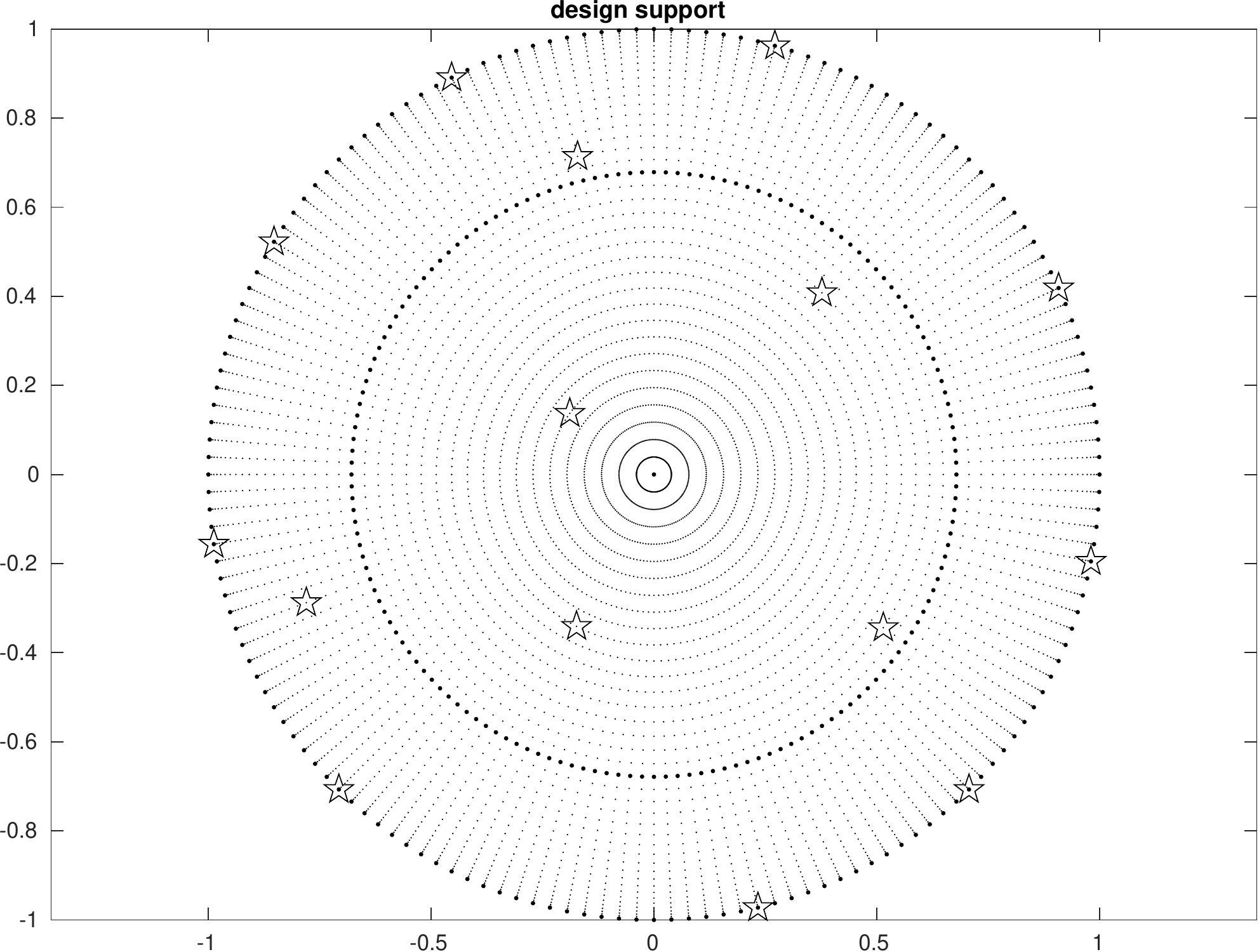}
\end{center}
\end{figure}
As last numerical test, we compare the Karush Kuhn Tucker residual obtained with Algorithm \ref{algo2} and with the Titterington multiplicative algorithm. 
\begin{experiment}[Gaussian random points cloud]\label{exp5}
Let $X$ be a Gaussian random points cloud of size $10000$ in $\reals^2$. Let $\Phi$ the space of polynomials of degree at most $3$ in two real variables. Let $z^0=1/M(1,1,\dots,1)^t)$. Run Algorithm \ref{algo2} with the following parameters setting: $\alpha=\beta=1.15$, $\tau^0=1$, $\epsilon=10^{-4}$, and $r_{max}=5$.
Consider also the Titterington multiplicative algorithm on the same design space and starting at the same initial design $z^0$.
\end{experiment} 
The single iteration (e.g., time step) of Algorithm \ref{algo2} is more computationally expensive with respect to the iteration of the Titterington algorithm (see \ref{SiTiTo78}). Thus we compare the Karush Kuhn Tucker residuals of the two methods at the same CPU time. We report the obtained results in Figure \ref{comparison}. Note that the effect of the different rates of convergence of the two considered algorithms is evident. 
\begin{figure}[h]
\begin{center}
\caption{Results of Experiment \ref{exp5} (10000 random Gaussian Points). The KKT residuals obtained by Algorithm \ref{algo2} and by the Titterington multiplicative algorithm \cite{SiTiTo78} vs CPU time are reported.}
\label{comparison}
\includegraphics[scale=0.45]{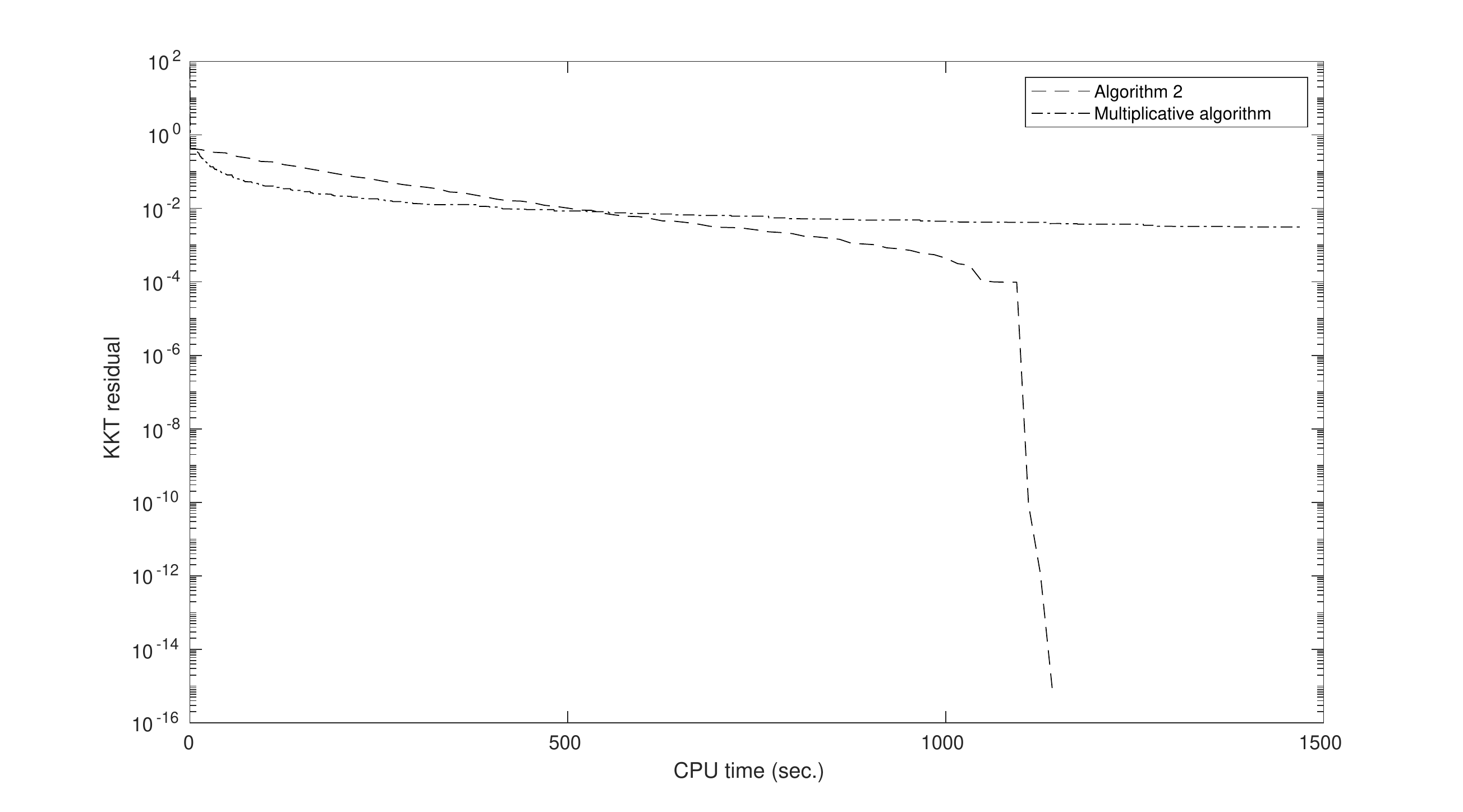}
\end{center}
\end{figure}

\vfill
\bibliographystyle{abbrv}
\bibliography{references}

\begin{thebibliography}{10}

\bibitem{BaTe06}
C.~Bayer and J.~Teichmann.
\newblock The proof of {T}chakaloff's theorem.
\newblock {\em Proc. Amer. Math. Soc.}, 134(10):3035--3040, 2006.

\bibitem{BlBoLeWa10}
T.~Bloom, L.~Bos, N.~Levenberg, and S.~Waldron.
\newblock On the convergence of optimal measures.
\newblock {\em Constr. Approx.}, 32(1):159--179, 2010.

\bibitem{BoCaLeSoVi11}
L.~Bos, J.-P. Calvi, N.~Levenberg, A.~Sommariva, and M.~Vianello.
\newblock Geometric weakly admissible meshes, discrete least squares
  approximations and approximate {F}ekete points.
\newblock {\em Math. Comp.}, 80(275):1623--1638, 2011.

\bibitem{BoDeSoVi11}
L.~Bos, S.~De~Marchi, A.~Sommariva, and M.~Vianello.
\newblock Weakly admissible meshes and discrete extremal sets.
\newblock {\em Numer. Math. Theory Methods Appl.}, 4(1):1--12, 2011.

\bibitem{BoPiVi20}
L.~Bos, F.~Piazzon, and M.~Vianello.
\newblock Near {G}-optimal {T}chakaloff designs.
\newblock {\em Comput. Statist.}, 35(2):803--819, 2020.

\bibitem{BoSoVi10}
L.~Bos, A.~Sommariva, and M.~Vianello.
\newblock Least-squares polynomial approximation on weakly admissible meshes:
  disk and triangle.
\newblock {\em J. Comput. Appl. Math.}, 235(3):660--668, 2010.

\bibitem{DeGaHeHeLa19}
Y.~D. Castro, F.~Gamboa, D.~Henrion, R.~Hess, and J.-B. Lasserre.
\newblock {Approximate optimal designs for multivariate polynomial regression}.
\newblock {\em The Annals of Statistics}, 47(1):127 -- 155, 2019.

\bibitem{HaFiRo21}
R.~Harman, L.~Filová, and S.~Rosa.
\newblock Optimal design of multifactor experiments via grid exploration.
\newblock {\em Stat Comput}, 31(70), 2021.

\bibitem{HeNa18}
L.~N. Hernandez and C.~J. Nachtsheim.
\newblock Fast computation of exact g-optimal designs via
  $i_\lambda$-optimality.
\newblock {\em Technometrics}, 60(3):297--305, 2018.

\bibitem{Ki61}
J.~Kiefer.
\newblock Optimum designs in regression problems. {II}.
\newblock {\em Ann. Math. Statist.}, 32:298--325, 1961.

\bibitem{KiWo59}
J.~Kiefer and J.~Wolfowitz.
\newblock Optimum designs in regression problems.
\newblock {\em Ann. Math. Statist.}, 30:271--294, 1959.

\bibitem{Lo63}
S.~{\L}ojasiewicz.
\newblock Une propri\'{e}t\'{e} topologique des sous-ensembles analytiques
  r\'{e}els.
\newblock In {\em Les \'{E}quations aux {D}\'{e}riv\'{e}es {P}artielles
  ({P}aris, 1962)}, pages 87--89. \'{E}ditions du Centre National de la
  Recherche Scientifique (CNRS), 1963.

\bibitem{LuPo13}
Z.~Lu and T.~K. Pong.
\newblock Computing optimal experimental designs via interior point method.
\newblock {\em SIAM J. Matrix Anal. Appl.}, 34(4):1556--1580, 2013.

\bibitem{MePi10}
B.~Merlet and M.~Pierre.
\newblock Convergence to equilibrium for the backward {E}uler scheme and
  applications.
\newblock {\em Commun. Pure Appl. Anal.}, 9(3):685--702, 2010.

\bibitem{Pi19}
F.~Piazzon.
\newblock Pluripotential numerics.
\newblock {\em Constructive Approximation}, 49(2):227--263, 2019.

\bibitem{PiSoVi17b}
F.~Piazzon, A.~Sommariva, and M.~Vianello.
\newblock Caratheodory-tchakaloff least squares.
\newblock pages 672--676, 2017.

\bibitem{PiSoVi17}
F.~Piazzon, A.~Sommariva, and M.~Vianello.
\newblock Caratheodory-{T}chakaloff subsampling.
\newblock {\em Dolomites Res. Notes Approx.}, 10:5--14, 2017.

\bibitem{PiVi14}
F.~Piazzon and M.~Vianello.
\newblock Suboptimal polynomial meshes on planar {L}ipschitz domains.
\newblock {\em Numer. Funct. Anal. Optim.}, 35(11):1467--1475, 2014.

\bibitem{Pu93}
F.~Pukelsheim.
\newblock {\em Optimal design of experiments}.
\newblock Wiley Series in Probability and Mathematical Statistics: Probability
  and Mathematical Statistics. John Wiley \& Sons, Inc., New York, 1993.
\newblock A Wiley-Interscience Publication.

\bibitem{Pu97}
M.~Putinar.
\newblock A note on {T}chakaloff's theorem.
\newblock {\em Proc. Amer. Math. Soc.}, 125(8):2409--2414, 1997.

\bibitem{SiTiTo78}
S.~Silvey, D.~Titterington, and B.~Torsney.
\newblock An algorithm for optimal designs on a design space.
\newblock {\em Communications in Statistics - Theory and Methods},
  7(14):1379--1389, 1978.

\bibitem{SoVi15}
A.~Sommariva and M.~Vianello.
\newblock Compression of multivariate discrete measures and applications.
\newblock {\em Numer. Funct. Anal. Optim.}, 36(9):1198--1223, 2015.

\bibitem{Tc57}
V.~Tchakaloff.
\newblock Formules de cubatures m\'{e}caniques \`a coefficients non
  n\'{e}gatifs.
\newblock {\em Bull. Sci. Math. (2)}, 81:123--134, 1957.

\bibitem{Vi16}
M.~Vianello.
\newblock Compressed sampling inequalities by {T}chakaloff's theorem.
\newblock {\em Math. Inequal. Appl.}, 19(1):395--400, 2016.

\bibitem{Yu10}
Y.~Yu.
\newblock Monotonic convergence of a general algorithm for computing optimal
  designs.
\newblock {\em The Annals of Statistics}, 38(3):1593--1606, 2010.

\end{thebibliography}
\end{document}